\documentclass[a4paper, USenglish, cleveref]{lipics-v2021}
\pdfoutput=1

\pdfoutput=1 
\hideLIPIcs  


\bibliographystyle{unsrt}

\title{Graph-Based Product Form}


\author{C\'eline Comte\footnotemark[1]}%
{LAAS--CNRS, Universit\'e de Toulouse, CNRS, Toulouse, France}%
{celine.comte@cnrs.fr}%
{https://orcid.org/0009-0005-9413-7124}%
{}

\author{Isaac Grosof\footnote{Authors are joint first authors, and are written in alphabetical order.}}%
{%
	Industrial Engineering and Management Science, Northwestern University, Evanston, IL, USA \and
	Electrical and Computer Engineering, University of Illinois, Urbana-Champaign, Urbana, IL, USA%
}%
{izzy.grosof@northwestern.edu}%
{https://orcid.org/0000-0001-6205-8652}
{Supported by AFOSR Grant FA9550-24-1-0002 and by a startup package from Northwestern University} 

\authorrunning{C. Comte and I. Grosof}

\Copyright{C. Comte and I. Grosof}

\ccsdesc[500]{Mathematics of computing~Markov processes}
\ccsdesc[500]{Applied computing~Operations research}

\keywords{Markov chain, product-form, directed graph, graph cut}





\nolinenumbers 

\usepackage{algorithm}%
\usepackage{algorithmicx}%
\usepackage{algpseudocode}%

\usepackage{dsfont}  
\newcommand{\indicator}[1]{ \mathds{1} [ #1 ] }

\usepackage{tikz}
\usetikzlibrary{calc}
\usetikzlibrary{arrows.meta}  
\usetikzlibrary{shapes,positioning}

\tikzset{
	state/.style={draw, circle},
	phantom/.style={state, fill=gray!50},
	newedge/.style={very thick},
}

\newcommand{\batchfirstnodes}{
	\foreach \i in {0, 1, ..., 7} {
		\node[state] (\i) at (\i, 0) {$\i$};
	}
	
	\foreach \i in {1, 2, ..., 7} {
		\node[phantom] (bar\i) at (\i, -1) {$\bar \i$};
	}
	
	\node (8) at (8, 0) {$\cdots$};
	\node (bar8) at (8, -1) {$\cdots$};
}

\newcommand{\batchfirstcliques}{
	\def\margin{.15cm}
	
	\draw[dashed, fill=blue!30, fill opacity=.3]
	($(0.north west)+(-\margin, \margin)$)
	-- ($(1.north east)+(\margin, \margin)$)
	-- ($(bar1.south east)+(\margin, -\margin)$)
	-- ($(bar1.south west)+(0, -\margin)$)
	-- ($(0.south west)+(-\margin, 0)$)
	-- ($(0.north west)+(-\margin, \margin)$);
	
	\draw[dashed, fill=blue!30, fill opacity=.3]
	($(2.north west)+(-\margin, \margin)$)
	-- ($(2.north east)+(\margin, \margin)$)
	-- ($(bar2.south east)+(\margin, -\margin)$)
	-- ($(bar2.south west)+(-\margin, -\margin)$)
	-- ($(2.north west)+(-\margin, \margin)$);
	
	\draw[dashed, fill=blue!30, fill opacity=.3]
	($(3.north west)+(-\margin, \margin)$)
	-- ($(4.north east)+(\margin, \margin)$)
	-- ($(bar4.south east)+(\margin, -\margin)$)
	-- ($(bar3.south west)+(-\margin, -\margin)$)
	-- ($(3.north west)+(-\margin, \margin)$);
	
	\draw[dashed, fill=blue!30, fill opacity=.3]
	($(5.north west)+(-\margin, \margin)$)
	-- ($(5.north east)+(\margin, \margin)$)
	-- ($(bar5.south east)+(\margin, -\margin)$)
	-- ($(bar5.south west)+(-\margin, -\margin)$)
	-- ($(5.north west)+(-\margin, \margin)$);
	
	\draw[dashed, fill=blue!30, fill opacity=.3]
	($(6.north west)+(-\margin, \margin)$)
	-- ($(7.north east)+(\margin, \margin)$)
	-- ($(bar7.south east)+(\margin, -\margin)$)
	-- ($(bar6.south west)+(-\margin, -\margin)$)
	-- ($(6.north west)+(-\margin, \margin)$);
}

\usepackage{soul}

\newcommand{\remove}[1]{\textcolor{red!30}{#1}}
\newcommand{\add}[1]{\answerstyle{#1}}
\newcommand{\replace}[2]{\remove{#1}\add{#2}}
\renewcommand{\remove}[1]{}
\renewcommand{\add}[1]{#1}
\renewcommand{\replace}[2]{#2}

\newcommand\qa{q^{\mathsf{a}}}
\newcommand\qb{q^{\mathsf{b}}}
\newcommand\pia{\pi^{\mathsf{a}}}
\newcommand\pib{\pi^{\mathsf{b}}}
\newcommand\pa{p^{\mathsf{a}}}
\newcommand\pb{p^{\mathsf{b}}}

\begin{document}

\maketitle

\begin{abstract}
	Product-form stationary distributions in Markov chains have been a foundational advance and driving force in our understanding of stochastic systems. In this paper, we introduce a new product-form relationship that we call ``\replace{graph-based product-form}{graph-based product form}''. As our first main contribution, we prove that two states of the Markov chain are in graph-based product form if and only if the following two equivalent conditions are satisfied: (i) a cut-based condition, reminiscent of classical results on product-form queueing systems, and (ii) a novel characterization that we call joint-ancestor freeness. The latter characterization allows us in particular to introduce a graph-traversal algorithm that checks product-form relationships for all pairs of states, with time complexity $O(|V|^2 |E|)$, if the Markov chain has a finite transition graph $G = (V, E)$. We then generalize graph-based product form to encompass more complex relationships, which we call ``\replace{higher-level product-form}{higher-level product form}'', and we again show these can be identified via a graph-traversal algorithm when the Markov chain has a finite state space. Lastly, we identify several examples from queueing theory that satisfy this product-form relationship.
\end{abstract}

\section{Introduction} \label{sec:intro}
Important classes of queueing systems and stochastic networks have been shown to have a so-called \emph{product-form stationary distribution}, where the stationary probability of a given state has a simple multiplicative relationship to the stationary probability of other nearby states.
The product-form property allows the stationary distribution of these systems to be cleanly and precisely characterized in closed-form, which is not possible for many other queueing systems. Such characterization has been instrumental in numerically evaluating performance \cite{B73,R80} and analyzing scaling regimes \cite{K21,C24-1,C24-2}, and it was more recently applied in the context of reinforcement learning \cite{A24,C24}.

Important queueing systems and stochastic networks that have been proven to exhibit product-form behavior include Jackson networks~\cite{J57}, BCMP networks~\cite{BCMP75}, Whittle networks~\cite{serfozo}, and networks of order-independent queues~\cite{BKK95} (also see~\cite{K11}).
Similar structures have appeared in other fields, such as statistical physics, with the zero-range process~\cite{A82}.
Discovering these categories of product-form systems and the underlying properties that give rise to their product-form behavior has represented a foundational advance and driving force in our understanding of stochastic systems.

Product-form results are often tied to time reversibility or quasi-reversibility properties. These can be established through the detailed balance property given the stationary distribution,
as well as by applying Kolmogorov's criterion on the transition rates of the Markov chain \cite{kelly}. In the simplest case of birth-and-death processes, both product-form and reversibility are implied by the transition diagram of the Markov chain.

However, there are important queueing systems that exhibit product-form behavior which cannot be explained under any existing product-form framework.
A motivating example for this paper is the multiserver-job saturated system with two job classes, which Grosof et al.~\cite{GHS23} demonstrated to have a product-form stationary distribution.
We explore this system via our novel framework in \Cref{sec:msj-example}.

We introduce a new kind of product-form Markov \replace{chains}{chain},
\emph{graph-based} product form.
In these Markov chains, product-form arises purely from the connectivity structure of the transition graph,
or in other words from the set of transitions with nonzero probability, for discrete-time Markov chains, abbreviated DTMCs; or nonzero rate, for continuous-time Markov chains, abbreviated CTMCs.
If a Markov chain has graph-based product form,
that product form holds regardless of its transition probabilities or rates,
under a given connectivity structure.
This is in contrast to most prior classes of product-form relationships,
where tweaking a single transition probability or rate would remove the product-form property.

In this paper, we characterize which directed graphs hold the correct structure to give rise to graph-based product-form Markov chains.
Our characterization is built up from a product-form relationship between states
(i.e., nodes in the transition diagram),
which exists when the ratio of the stationary probabilities for two nodes forms a simple multiplicative relationship, arising from the graph structure.
If those relationships span the graph, then the whole Markov chain has graph-based product form.
We therefore focus on characterizing which graphs give rise to product-form relationships between
a given pair of nodes.

In our main result, \Cref{theo:s-product-form},
we give two \add{equivalent} necessary and sufficient conditions
under which such a product-form relationship exists:
a cut-based characterization,
reminiscent to classical conditions for product-form,
and a novel characterization which we call \emph{joint-ancestor freeness}. More specifically, focusing on two particular nodes or states~$i$ and~$j$:
\begin{itemize}
	\item \emph{Cut-based characterization}:
	For the first condition,
	we show that if there exists a cut (i.e.\ a partition of the nodes into two sets),
	where $i$ is on one side of the cut and $j$ is on the other,
	and where the only edges that cross the cut have either $i$ or $j$ as their source nodes,
	then $i$ and $j$ have a product-form relationship.
	We call such a cut an \emph{$i, j$-sourced cut}.
	Unfortunately, directly searching for such cuts is inefficient and impractical,
	as there are exponentially many cuts in the graph.
	\item \emph{Joint-ancestor freeness}:
	We show that the existence of such a cut is equivalent to a second, simpler-to-check property,
	which we call joint-ancestor freeness.
	We refer to a node $k$ as a joint ancestor of $i$ and $j$ if there exists a path from $k$ to $i$ which does not go through $j$, and a path from $k$ to $j$ which does not go through $i$.
	We show that the existence of an $i,j$-sourced cut is equivalent to $i$ and $j$ having no joint ancestors,
	which is efficient to directly search for.
\end{itemize}
Finally, we show in \Cref{theo:no-product-form} that this relationship is bidirectional:
If there is no $i,j$-sourced cut, or equivalently if there is a joint ancestor $k$,
then nodes $i$ and $j$ will not have a straightforward product-form relationship.

Even in graphs where the most straightforward product-form relationships do not connect every pair of nodes,
a less-direct kind of product-form relationship can still exist,
which we call ``\replace{higher-level product-form}{higher-level product form}''.
We call the above product-form relationships ``first-level product-form'',
and we show that a weaker, but still noteworthy, kind of product-form relationship,
``second-level product-form'',
exists whenever there exists a cut with multiple nodes as sources on one or both sides of the cut,
such that the sources on each side are connected by first-level product-form relationships.
Further levels can be defined recursively.
We study this higher-level product-form \replace{behavior}{relationship} in \Cref{sec:second-level,sec:second-level-product},
with \Cref{sec:batch1} as a motivating example.

\subsection{Contributions}

In \Cref{sec:product-form-def}, we define the novel concept
of \replace{graph-based product-form}{graph-based product form}.
In \Cref{sec:s-product-form}, and specifically \Cref{theo:s-product-form},
we prove that \replace{graph-based product-form}{graph-based product form}
between two nodes $i$ and $j$
is equivalent to two graph-based properties:
The existence of an $i,j$-sourced cut,
and the absence of a joint ancestor of $i$ and $j$.
In \Cref{sec:cut-graph,sec:ps-product-form}, we introduce the cut graph and its connection to \replace{graph-based product-form}{graph-based product form} spanning an entire Markov chain.
In \Cref{sec:second-level}, we explore and characterize \replace{higher-level product-form}{higher-level product form} relationships, which we show correspond to higher-level cuts.
In \Cref{sec:s-product-form-examples,sec:examples}, we give a variety of examples of graphs which do or do not have \replace{graph-based product-form}{graph-based product form}, and use them to illustrate our characterization.

\subsection{Prior work} \label{sec:prior-work}

There is a massive literature that focuses on deriving
the stationary distribution (or a stationary measure)
of Markov chains with countable
(finite or infinite) state spaces.
In reviewing this literature, we focus on results that
either provide a closed-form expression for the stationary measures
or make structural assumptions on the Markov chain,
or both.

\paragraph*{Reversibility, quasi-reversibility, and partial balance}

A long series of works has derived product-form stationary distributions
by focusing on Markov chains
where a stronger form of the balance equations holds,
thus balancing the probability flow between a state and each set
in a partition of its neighbors.
This is often equivalent to
properties of the time-reversed process \cite{serfozo}.
For example, the Kolmogorov criterion \cite[Theorem~2.8]{serfozo}
is a necessary and sufficient condition for reversibility,
which as a by-product yields a closed-form expression for the stationary distribution
as a product of transition rates.
Another example is quasi-reversibility,
as described in \cite[Chapter~3]{kelly}.
Among these works, many have focused on Markov chains
exhibiting a specific transition diagram,
e.g., multi-class queueing systems with arrivals and departures occurring one at a time,
and have identified necessary and sufficient conditions
on the transition rates
that yield a product-form stationary distribution.
This approach has therefore produced many models
applicable to queueing theory and statistical physics.
Reversible models and their variants involving internal routing include
the celebrated Jackson networks \cite{J57},
the zero-range process \cite{A82},
and Whittle networks \cite{serfozo}.
Quasi-reversibility has also given rise to multiple models,
including order-independent queues \cite{BKK95,BK96}
and pass-and-swap queues \cite{CD21}; see \cite{GR20} for a recent survey.
Other examples of queueing models
that satisfy partial balance equations are
token-based order-independent queues \cite{ABDV22}
and certain saturated multiserver-job queues \cite{RM17,GHS23}.

\paragraph*{Graph-based product form}

To the best of our knowledge,
very few papers exploit the structure
of a Markov chain's transition diagram
(rather than its transition \emph{rates})
to guarantee the existence of a product-form stationary distribution.
One example is \cite{F87}, which introduces
\textit{single-input super-state decomposable Markov chains}:
the Markov chain's state space is partitioned into a finite number of sets,
called superstates,
such that all edges into a superset
have the same node as endpoint.
(All finite-state-space Markov chains satisfy this condition
when the partition is formed by singletons.)
Under this assumption, the process of deriving the Markov chain's stationary distribution
can be divided into two steps,
one that solves the stationary distribution
of a Markov chain defined over the superstates,
and another that solves the stationary distribution
of a Markov chain restricted to each superstate.
While the superstate decomposition has a loose resemblance to our cuts,
there is no deeper similarity between the methods.
In particular,
our approach is nontrivial both for finite and infinite Markov chains.
The superstate decomposition approach can be seen as a different approach to deriving product-forms.

Closer to our work,
\add{Grosof et al.~\cite{GHS23}} consider\remove{s} a multiserver-job (MSJ) model
described by a CTMC
and show that it has a product-form stationary distribution
irrespective of the transition rates.
This result is proven in more detail in a technical report \cite{GHS20}.
This example, which inspired the present work,
is discussed in detail in \Cref{sec:msj-example}.

\paragraph*{\add{Matrix-geometric methods}}

\add{Another well-known family of methods that exploits structural properties
	of Markov chains is matrix-geometric methods~\cite{H93,L99,P11},
	developed for the analysis of quasi-birth-and-death (QBD) processes.
	A Markov chain is called a QBD process if its states
	can be partitioned into disjoint superstates indexed by $0, 1, 2, \ldots$,
	such that transitions occur either within a superstate (inner transitions),
	from a superstate $i \in \{0, 1, 2, \ldots\}$ to superstate $i+1$ (upward transition),
	or from a superstate $i \in \{1, 2, \ldots\}$ to superstate $i-1$ (downward transition).
	The graph-based product form we consider does have intersection with QBD processes,
	in the sense that there exist QBD processes that can be analyzed
	with the prism of \replace{graph-based product-form}{graph-based product form};
	see \Cref{ex:qbd,sec:msj-example,sec:batch1}.
	However, QBD processes and graph-based product form are two different notions:
	there are QBD processes that do not have a graph-based product form,
	and there are Markov chains that exhibit graph-based product form
	but are not QBD processes; see \Cref{ex:one-way-cycle,ex:one_way_cycle,ex:tree},
	\Cref{sec:batch-2}, and Appendix~\ref{app:clique}.
	Furthermore, as we can see in \Cref{sec:batch1},
	there are QBD processes for which the framework of graph-based product form
	allows us to derive the stationary distribution more directly,
	without resorting to matrix-geometric methods.
	Lastly, to the best of our knowledge,
	matrix-geometric methods are used mainly
	when the Markov chain shows repetitive patterns,
	i.e., when the inner (resp.\ upward, downward) transitions are similar across superstates,
	except for superstate~$0$;
	\replace{graph-based product-form}{graph-based product form} is not so much related to repetition
	as to exploiting the structure of the transition diagram.}

\paragraph*{Symbolic solutions}

Our graph-based product-form method
can also be seen \add{as} an algorithmic way to discover a particular
type of product-form \replace{relationships}{relationship} in Markov chains, giving a clean symbolic solution for the stationary distribution.
If the Markov chains are structured, as in the examples in \Cref{sec:examples}, these relationships can be found \replace{explicitly}{by direct inspection}.
However, if algorithm searching is required, we give an algorithmic approach to discover single-source cuts in the underlying graph in \Cref{algo:cut-graph} in $O(|V|^2 |E|)$ time, if the Markov chain has a finite transition graph $G = (V, E)$, allowing us to discover whether a product-form relationship exists.

Prior to our approach, one could symbolically find the stationary distribution for a general symbolic Markov chain in $O(|V|^2)$ time, by symbolically solving the balance equations. However, there is no guarantee that the resulting symbolic expression would be in a simple form.
Simplifying and factorizing the resulting symbolic expression, which might have $O(|V|^2)$ terms,
does not have a known efficient, deterministic algorithm.
In fact, polynomial factorization is a more complicated version of the polynomial identity testing (PIT) problem,
for which no polynomial-time deterministic algorithm is known \cite{A09};
the two problems were recently proven equivalent, in the sense that a deterministic polynomial-time algorithm for one would imply the same for the other \cite{K15}.
Finding such an algorithm has remained a major open problem.

\paragraph*{Other related methods}

Product-form stationary distributions
for DTMCs or CTMCs
have been studied in many different contexts,
such as Stochastic Petri networks,
which sometimes lead to constructive and algebraic methods
that assume particular structure of the transition rates~\cite{D92,B12}.
Orthogonally, the graph structure of a Markov chain
has also been used for other purpose
than deriving a simple closed-form expression
for the stationary distribution.
For instance, the survey \cite{H87} focuses on iterative methods
to approximate the stationary distribution of a finite
Markov chain with transition matrix~$A$
using updates of the form $\pi_{t+1} = \pi_t A$.
The algorithms described in \cite{H87},
called \textit{aggregation-disaggregation methods},
aim at speeding-up iterative methods
by occasionally replacing $\pi_{t+1}$ with $\tilde\pi_{t+1} = S(\pi_{t+1})$,
where $S$ is a function that exploits
structure in the Markov chain's transition diagram.

\section{Model and definitions}

We start by introducing preliminary graph notation and terminology in \Cref{sec:graph-notation}, in particular \emph{set-avoiding path\add{s}} and \emph{ancestor sets},
then \add{we} introduce the key notions of \replace{an}{a} \emph{formal Markov chain} and \emph{\replace{graph-based product-form}{graph-based product form}}
in \Cref{sec:product-form-def}.

\subsection{Graph notation and terminology}
\label{sec:graph-notation}

The focus of this paper is on the directed graphs
that underlie Markov chains and on cuts in these graphs.
Besides recalling classical graph-theoretic notions,
we introduce \emph{set-avoiding paths} and \emph{ancestor sets}
that will be instrumental in the rest of the paper.

A \emph{directed graph} is a pair $G = (V, E)$,
where $V$ is a countable set of nodes, and
$E \subseteq V \times V$ is a set of directed edges.
The graph $G$ is called \emph{finite} if $V$ is finite
and \emph{infinite} if $V$ is countably infinite.
A \emph{cut} of a directed graph~$G = (V, E)$ is a pair $(A, B)$
of nonempty sets that form a partition of~$V$, that is,
$A \cup B = V$ and $A \cap B = \emptyset$.
An edge $(u, v) \in E$ is then said to cross the cut $(A, B)$
if either $u \in A$ and $v \in B$, or $u \in B$ and $v \in A$.
A \emph{path} in a directed graph~$G = (V, E)$
is a sequence $v_1, v_2, \ldots, v_n$
of \emph{distinct} nodes in~$V$, with $n \in \{1, 2, \ldots\}$,
such that $(v_p, v_{p+1}) \in E$ for each $p \in \{1, 2, \ldots, n-1\}$;
the length of the path is the number $n-1$ of edges that form it.
In particular, a path of length~0 consists of a single node and no edges.
A graph~$G = (V, E)$ is called \emph{strongly connected} if,
for each $i, j \in V$,
there exists a path from node~$i$ to node~$j$ in the graph~$G$.

In \Cref{sec:product-form-def}, we will relate the existence of cuts that yield convenient balance equations with the following two definitions that will be instrumental in the paper.

\begin{definition}[Set-avoiding subgraph and set-avoiding path]
	Consider a directed graph~$G = (V, E)$
	and let $U \subseteq V$.
	The \emph{set-avoiding subgraph} $G {\setminus} U = (V {\setminus} U, E')$
	is defined with
	$E' = \{(i, j) \in E: i, j \notin U \}$.
	Given $i, j \in V {\setminus} U$,
	we let $P(i \to j {\setminus} U)$ denote
	an arbitrary path $v_1, v_2, \ldots, v_n$ in~$G$,
	with $n \in \{1, 2, \ldots\}$,
	with source node~$v_1 = i$ and destination node~$v_n = j$,
	and such that $v_p \notin U$ for each $p \in \{1, 2, \ldots, n\}$.
	Such a path is said to \emph{avoid} the set~$U$.
	Equivalently, a path $P(i \to j {\setminus} U)$
	is a path from node~$i$ to node~$j$ in the subgraph $G {\setminus} U$.
	If $U = \{u\}$ is a singleton,
	we write $G {\setminus} u$ for $G {\setminus} \{u\}$
	and $P(i \to j {\setminus} u)$ for $P(i \to j {\setminus} \{u\})$.
\end{definition}

\begin{definition}[Ancestor and ancestor set]
	Consider a directed graph $G = (V, E)$
	and let~$i, j \in V$.
	Node~$i$ is called an \emph{ancestor} of node~$j$ (in~$G$)
	if there exists a directed path from node~$i$ to node~$j$ (in~$G$), i.e.,
	if there exists a path $v_1, v_2, \ldots, v_n$ (in $G$)
	with $v_1 = i$ and $v_n = j$.
	For each $i \in V$, $A_i(G)$ denotes the set of ancestors of node~$i$ (in~$G$).
	For each $I \subseteq V$,
	$A_I(G) = \bigcup_{i \in I} A_i(G)$
	denotes the \emph{ancestor set} of node set~$I$ (in~$G$).
\end{definition}

The ancestor set of a node contains the node itself
(via a path of length zero),
so that $I \subseteq A_I(G)$ for each $I \subseteq V$.
A directed graph~$G = (V, E)$ is strongly connected if and only if
the ancestor set of each node is the whole set $V$.
Procedure \textsc{Ancestors} in \Cref{algo:ancestors}
is a classical breadth-first-search algorithm
that returns the ancestor set of a node set in a finite graph~$G$.
This algorithm can run in time~$O(|E|)$
with the appropriate data structure
(e.g., the graph is encoded as
a list of ancestor lists for each node)
because each edge is visited at most once
over all executions of \Cref{row:union}.

\begin{algorithm}[ht]
	\caption{Returns the ancestor set of a node set in a finite graph}
	\label{algo:ancestors}
	\begin{algorithmic}[1]
		\Procedure{Ancestors}{finite directed graph $G = (V, E)$, set~$I \subseteq V$}
		$\to$ set $A \subseteq V$
		\State $A \gets \emptyset$
		\Comment Ancestor set under construction
		\State $F \gets I$
		\Comment Set of ``frontier'' nodes: nodes that have been visited
		\State \Comment but whose neighbor list has not yet been read
		\While{$F \neq \emptyset$}
		\State $A \gets A \cup F$
		\State $N \gets \bigcup_{\ell \in F} \{k \in V {\setminus} A: (k, \ell) \in E\}$ 
		\Comment New frontier nodes
		\label{row:union}
		\State $F \gets N$
		\EndWhile
		\State \Return $A$
		\EndProcedure
	\end{algorithmic}
\end{algorithm}

\subsection{Markov chains and product-form relationship}
\label{sec:product-form-def}

As announced in \Cref{sec:intro},
our goal is to identify necessary and sufficient conditions
on a Markov chain's transition diagram~$G$
for which the associated stationary measures have a product-form relationship,
for all values of the transition rates.
Therefore, we start by defining a formal Markov chain,
where the transition rates are free variables rather than fixed values,
and we define the corresponding stationary distribution.
We then specify our definition of a product-form Markov chain.

\paragraph*{Formal Markov chain}

Our goal is to understand how the structure
of a Markov chain's transition diagram impacts
the relationship between its transition rates
and stationary measures.
This motivates the definition of a formal Markov chain.
\add{As we observe below, all our results apply directly to all the stationary measures of a formal Markov chain, irrespective of whether or not the instantiations of this Markov chain are positive recurrent.}

\begin{definition}[Formal Markov chain]
	Let $G = (V, E)$ be a (possibly infinite) strongly-connected directed graph.
	Define the corresponding \emph{formal Markov chain}
	to have transition rate from node~$i$ to node~$j$
	equal to $q_{i, j} > 0$ for each $(i, j) \in E$
	and $0$ for each $(i, j) \in (V \times V) {\setminus} E$.
	Note that $q_{i,j}$ is a free variable, not instantiated to a specific rate.
\end{definition}

For each strongly-connected graph~$G$, there is a single corresponding formal Markov chain, and vice versa. We will therefore refer to the two interchangeably. The quantities $q_{i, j}$ can be interpreted either as transition rates (CTMC) or as transition probabilities (DTMC; introducing the additional assumption that $\sum_{j \in V} q_{i, j} = 1$ for each $i \in V$).

For each formal Markov chain $G = (V, E)$,
we can define the associated (formal) stationary distribution $\pi$
to be the solution, as a function of the free variables $q_{i, j}$,
to the balance equation and normalization requirement:
\begin{align}
	\label{eq:balance-def}
	\pi_i \sum_{j \mid (i, j) \in E} q_{i, j}
	&= \sum_{k \mid (k,i) \in E} \pi_k q_{k, i},
	\quad i \in V, \\
	\label{eq:summation-def}
	\sum_{i \in V} \pi_i &= 1.
\end{align}
Because $q_{i,j}$ are free variables,
if $G$ is an infinite graph,
one cannot in general guarantee
that the summation requirement \eqref{eq:summation-def} is satisfied.
Thus for infinite graphs, we will instead consider a stationary measure and omit \eqref{eq:summation-def}, but we will still refer to stationary distributions for simplicity.
\remove{In fact, all our results apply directly to all the stationary measures of a formal Markov chain, irrespective of whether or not the instantiations of this Markov chain are positive recurrent.}

\paragraph*{Product-form relationship}

We now come to the central concept of the paper.
\Cref{def:graph-based-product-form}
gives our definition of
a product-form relationship between two nodes,
while \Cref{def:product-form-distribution}
considers the entire graph.

\begin{definition}[\replace{Graph-based product-form}{Graph-based product form}]
	\label{def:graph-based-product-form}
	Consider a formal Markov chain $G = (V, E)$
	and let $i, j \in V$.
	Nodes~$i$ and~$j$
	are in a \emph{graph-based product-form relationship}
	with one another 
	if, letting $\pi$ denote the Markov chain's stationary distribution,
	we have
	\begin{align} \label{eq:product-form}
		\pi_i f_{i, j} = \pi_j f_{j, i},
	\end{align}
	where $f_{i, j}$ and $f_{j, i}$ are polynomials (or more generally, rational functions)
	in the transition rates of the formal Markov chain.
	The complexity of a product-form relationship
	will be measured by the complexity of the associated polynomials
	(e.g., degree, number of monomials, arithmetic circuit complexity, arithmetic circuit depth, etc.).
	The term \emph{graph-based product form}
	will only be employed in association with a certain complexity level,
	as will be defined below.
\end{definition}

\begin{definition}[Product-form distribution]
	\label{def:product-form-distribution}
	A formal Markov chain has a \emph{product-form stationary distribution} at a given complexity level if all pairs of nodes in the graph are in a graph-based product-form relationship at that complexity level.
\end{definition}

For brevity, we will often say ``product-form'', leaving ``graph-based'' implicit. Note however that this notion of product-form is more specific than the general concept in the literature, as discussed in \Cref{sec:prior-work}.
\add{We initially focus in \cref{sec:s-product-form} on two simple types of product-form relationships, which we define here.
	We define more complex types of product-form relationships in \cref{sec:second-level}.}
\remove{We will be particularly interested in the following types of product-form relationships, in increasing order of complexity:}
\begin{description}
	\item[S-product-form:] S stands for \emph{sum}.
	Let $N_i := \{j \in V \mid (i, j) \in E\}$ denote the out-neighborhood of $i$.
	Focusing on $f_{i, j}$, we say that nodes~$i$ and~$j$ are
	in an S-product-form relationship
	if there exists some $S_{i,j} \subseteq N_i$, such that
	\begin{align*}
		f_{i, j} &= \sum_{k \in S_{i,j}} q_{i, k}.
	\end{align*}
	\item[PS-product-form:] P stands for \emph{product}.
	Again focusing on $f_{i, j}$,
	we say that nodes~$i$ and~$j$ are in a PS-product-form relationship
	if $f_{i, j}$ is the product of sums of subsets of transition rates emerging from nodes in the graph: for some $F_{i,j} \subseteq V$,
	there exists $S_{a, i, j} \subseteq N_a$ for each $a \in F_{i,j}$, such that
	\begin{align*}
		f_{i, j} &= \prod_{a \in F_{i,j}} \sum_{k \in S_{a, i,j}} q_{a, k}.
	\end{align*}
	\remove{\item[SPS-product-form:] We add another layer of alternation. Each sum is over neighboring vertices, while the products are over arbitrary vertices.
		We also allow the terms in the products to be the \emph{inverses} of sums, as well as direct sums. 
		Focusing on $f_{i, j}$,
		we say that nodes~$i$ and~$j$ are in an SPS-product-form relationship if
		there exist $S_{i,j} \subseteq N_i$, $F_{k,i,j} \subseteq (V \times \{-1, 1\})$ for each $k \in S_{i,j}$,
		and $S_{a,k,i,j} \subseteq N_a$ for each $(a, p) \in F_{k,i,j}$, such that
		\begin{align*}
			f_{i, j} = \sum_{k \in S_{i,j}} \prod_{(a, p) \in F_{k,i,j}} \Big(\sum_{k' \in S_{a,k,i,j}} q_{a,k'}\Big)^p.
		\end{align*}
		\item[Higher-order:] We can similarly define PSPS-product-form, SPSPS-product-form and more generally (PS)$^n$ and S(PS)$^{n}$ product form for any $n \in \{1, 2, 3, \ldots\}$.}
\end{description}

\add{In this paper, we give necessary and sufficient conditions on the graph structure of a Markov chain for product-form relationships to exist.
	Moreover, we explicitly construct the rational functions which appear in these product form relationships:
	For S-product-form relationships, we give necessary and sufficient conditions in \cref{theo:s-product-form}, and give the explicit rational functions in \eqref{eq:first-level-product-form}.
	For PS-product-form relationships, we give a sufficient condition in \cref{lem:cut-graph-ps}, and give the explicit rational functions in \eqref{eq:cut-graph-product-form}.
	\remove{For SPS-product-form relationships, we give a sufficient condition in \cref{lem:second-level-sps}, and give the explicit rational functions in \eqref{eq:sps-product-form}.}}

\paragraph*{\add{Arithmetic circuit complexity}}

\add{We make a short parenthesis to explain how these product-form classes are related to standard complexity measures. Let us first recall the definition of \emph{arithmetic circuit complexity classes} \cite{SY10}, a means of defining families of simple rational functions.}

\begin{definition}
	\label{def:arithmetic-circuit}
	\add{An arithmetic circuit is a directed acyclic graph in which each node with in-degree zero is an input variable and each node with positive in-degree is a basic arithmetic function (addition, multiplication, or reciprocal); there is a single node with out-degree zero called the output. Operations are performed in the direction indicated by edges, so that the output of a gate (a synonym of node in this context) is used as an input by its children.}
	\add{The \emph{depth} of an arithmetic circuit is the maximum number of operations in a path from the inputs to the output.}
\end{definition}

\replace{These}{The} product-form classes \add{defined above} correspond to \replace{limited-depth arithmetic circuit complexity classes}{classes of arithmetic circuits with a limited depth}: S-product-form is a subclass of depth-1 circuits, PS-product-form is a subclass of depth-2 circuits\remove{, and so forth}. The width of these arithmetic circuits, or more specifically the in-degree of the gates, is also limited by the degrees of the graph's nodes. Thus simple product-form classes correspond to simple arithmetic circuits.

\subsection{Cuts with a given source node set} \label{sec:cuts}

The following lemma is borrowed from \cite[Lemma~1.4]{kelly}.
It shows that cuts in a Markov chain's transition diagram
can be exploited to derive (from the balance equations)
a new set of equations, called \emph{cut equations},
that can sometimes be used to derive the stationary distribution more easily.

\begin{lemma} \label{lem:cut-equations}
	Consider a formal Markov chain $G = (V, E)$
	and let $\pi$ denote its stationary distribution.
	For each cut $(A, B)$ of the graph~$G$, we have
	\begin{align} \label{eq:cut}
		\sum_{(i, j) \in E \cap (A \times B)} \pi_i q_{i, j}
		&= \sum_{(j, i) \in E \cap (B \times A)} \pi_j q_{j, i}.
	\end{align}
\end{lemma}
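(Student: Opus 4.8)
The plan is to derive the cut equation \eqref{eq:cut} by summing the balance equations \eqref{eq:balance-def} over all nodes lying on one side of the cut and observing that the probability flow internal to that side appears identically on both sides, and therefore cancels. Fix a cut $(A, B)$, and sum \eqref{eq:balance-def} over all $i \in A$, yielding
\begin{align*}
	\sum_{i \in A} \pi_i \sum_{j \mid (i, j) \in E} q_{i, j}
	= \sum_{i \in A} \sum_{k \mid (k, i) \in E} \pi_k q_{k, i}.
\end{align*}

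The first step is to split each side according to whether the other endpoint of the edge lies in $A$ or in $B$; this is legitimate because $(A, B)$ partitions $V$. The left-hand side, which accounts for all flow out of nodes in $A$, becomes the flow from $A$ to $A$ plus the flow from $A$ to $B$, while the right-hand side, which accounts for all flow into nodes in $A$, becomes the flow from $A$ to $A$ plus the flow from $B$ to $A$:
\begin{align*}
	&\sum_{(i, j) \in E \cap (A \times A)} \pi_i q_{i, j}
	+ \sum_{(i, j) \in E \cap (A \times B)} \pi_i q_{i, j} \\
	&\qquad = \sum_{(k, i) \in E \cap (A \times A)} \pi_k q_{k, i}
	+ \sum_{(k, i) \in E \cap (B \times A)} \pi_k q_{k, i}.
\end{align*}
The key observation is that the two sums over $E \cap (A \times A)$ are one and the same: both range over the edges of~$G$ with both endpoints in~$A$, and each such edge contributes the same term, the discrepancy being only a renaming of the dummy variables. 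Cancelling this common term and renaming $(k, i)$ as $(j, i)$ in the remaining right-hand sum yields exactly \eqref{eq:cut}.

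The step I expect to require the most care is justifying these manipulations when $G$ is infinite, in which case $A$ may be infinite and the rearrangements and cancellations are not automatic. I would handle this by appealing to nonnegativity: upon any instantiation of the free variables by positive rates for which $\pi$ is a genuine (nonnegative) stationary measure, every term in every sum above is nonnegative, so all reorderings and the splitting into the $A \times A$ and cross-cut parts are valid as identities in $[0, +\infty]$ by Tonelli's theorem. The cancellation of the common internal term is then immediate whenever that term is finite, which is the case of interest; and since \eqref{eq:cut} is an identity of rational functions that holds on this dense set of instantiations, it holds identically. In the finite-graph case all sums are finite and no such justification is needed.
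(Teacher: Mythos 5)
Your proof is correct and takes essentially the same approach as the paper: the paper's own proof of \Cref{lem:cut-equations} consists precisely of summing the balance equations~\eqref{eq:balance-def} over all $i \in A$ and cancelling the internal $A \times A$ flow, which you have written out in full, including the nonnegativity/Tonelli justification for infinite graphs that the paper leaves implicit under ``making simplifications.'' The paper also sketches a second, probabilistic route---interpreting both sides of~\eqref{eq:cut} as long-run transition rates across the cut via the strong law of large numbers for ergodic Markov chains---but that is offered only as an alternative to the argument you gave.
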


\begin{proof}
	Equation~\eqref{eq:cut} follows by summing the balance equations~\eqref{eq:balance-def} over all $i \in A$ and making simplifications.
	More directly, \eqref{eq:cut} can be obtained by applying the strong law of large numbers for ergodic Markov chains. Indeed, the left-hand-side of~\eqref{eq:cut} is the long-run rate at which the chain makes a transition from~$A$ to~$B$, and the right-hand side is the long-run rate at which the chain makes a transition from~$B$ to~$A$.
\end{proof}

\begin{figure}[b!]
	\centering
	\begin{tikzpicture}
		\foreach \i in {0, 1, 2, 3, 4} {
			\node[draw, circle] (\i) at (2.5*\i, 0) {$\i$};
		}
		\node (5) at (2.5*5, 0) {$\cdots$};
		
		\foreach \i/\j in {0/1, 1/2, 2/3, 3/4, 4/5} {
			\draw (\i) edge[->, bend left] (\j);
			\draw (\j) edge[->, bend left] (\i);
			\draw[dashed] ($(\i)!.5!(\j)+(0,.6)$) -- ($(\i)!.5!(\j)-(0,.6)$);
			\node[anchor=south] at ($(\i)!.5!(\j)+(0,.6)$) {Cut $\i$};
		}
		
		\foreach \i/\j in {0/1, 1/2, 2/3, 3/4} {
			\node[anchor=north]
			at ($(\i)!.5!(\j)+(0, -0.6)$)
			{\scriptsize \add{$\pi_\i q_{\i, \j} = \pi_\j q_{\j, \i}$}};
		}
	\end{tikzpicture}
	\caption{A birth-and-death process.}
	\label{fig:bd}
\end{figure}
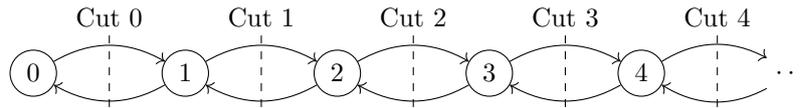

\begin{figure}[b!]
	\centering
	\begin{tikzpicture}
		\foreach \i in {0, 1, ..., 3} {
			\node[state] (\i) at (1.5*\i, 0) {$\i$};
		}
		
		\node[state] (bar1) at (1*1.5, -1.5) {$4$};
		\node[state] (bar2) at (2*1.5, -1.5) {$5$};
		\node[state] (bar3) at (3*1.5, -1.5) {$6$};
		
		\foreach \i/\j in {0/1, 1/2, 2/3} {
			\draw[->] (\j) -- (\i);
			\draw[->] (\i) -- (bar\j);
		}
		
		\foreach \i/\j in {1/2, 2/3} {
			\draw[->] (bar\i) -- (bar\j);
		}
		
		\foreach \i/\j in {0/1, 1/2, 2/3} {
			\draw[->] (bar\j) -- (\j);
		}
		
		\foreach \i/\j/\k in {0/1/0, 1/2/2, 2/3/4} {
			\draw[dashed]
			($(\i)!.5!(\j)+(0,.6)$)
			-- ($(\i|-bar\j)!.5!(bar\j)-(0,.2)$);
			\node[anchor=south]
			at ($(\i)!.5!(\j)+(0,.6)$) {\add{Cut $\k$}};
		}
		
		\foreach \i/\j/\k/\l in {0/1/2/1, 1/2/3/3} {
			\draw[dashed] ($(\i)!.6!(\j)+(0, .2)$)
			edge[bend right] ($(\j)!.5!(bar\j)$);
			\draw[dashed] ($(\j)!.5!(bar\j)$)
			edge[bend left] ($(bar\j)!.4!(bar\k)-(0, .6)$);
			\node[anchor=north]
			at ($(bar\j)!.5!(bar\k)-(0, .6)$) {\add{Cut $\l$}};
		}
		
		\draw[dashed] ($(2)!.6!(3)+(0, .2)$)
		edge[bend right] ($(3)!.5!(bar3)$);
		\draw[dashed] ($(3)!.5!(bar3)$)
		edge[bend right] ($(3)!.5!(bar3)+(.2, 0)$);
		\node[anchor=west] at ($(3)!.5!(bar3)+(.2, 0)$) {\add{Cut~$5$}};
		
		\node[anchor=west, align=left]
		at ($(3)!.5!(bar3)+(1.8, 0)$) {
			\add{Cut~0 yields
				$\pi_1 q_{1, 0} = \pi_0 q_{0, 4}$} \\
			\add{Cut~1 yields
				$\pi_4 (q_{4, 1} + q_{4, 5})
				= \pi_1 q_{1, 0}$} \\
			\add{Cut~2 yields
				$\pi_2 q_{2, 1} = \pi_1 q_{1, 5} + \pi_4 q_{4, 5}$} \\
			\add{Cut~3 yields
				$\pi_5 (q_{5, 2} + q_{5, 6}) = \pi_2 q_{2, 1}$} \\
			\add{Cut~4 yields
				$\pi_3 q_{3, 2} = \pi_2 q_{2, 6} + \pi_5 q_{5, 6}$} \\
			\add{Cut~5 yields
				$\pi_6 q_{6, 3} = \pi_3 q_{3, 2}$}
		};
		
	\end{tikzpicture}
	\caption{A simple formal Markov chain exhibiting a product form relationship between \replace{nodes $1$ and $4$, as a result of the cut $(\{0,4\}, \{1,2,3,5,6\})$.}{nodes~$0$ and $1$ (via cut~0), $1$ and $4$ (via cut~1), $2$ and $5$ (via cut~3), and 3 and 6 (via cut~5). In addition, cut~2 allows us to express $\pi_2$ as a function of $\pi_0$ and $\pi_1$, and cut~4 allows us to express $\pi_3$ as a function of $\pi_2$ and $\pi_5$. All in all, these six cut equations allow us to express $\pi_i$ as a function of $\pi_0$ for each $i \in \{1, 2, \ldots, 6\}$.}}
	\label{fig:jaf-cuts}
\end{figure}

The rest of the paper focuses on identifying
necessary and sufficient conditions on the graph structure
that guarantee the existence of ``nice cuts''
that yield a simple expression
for the Markov chain's stationary distribution via~\eqref{eq:cut}.
A simple and famous example is a birth-and-death process,
as shown in \Cref{fig:bd}:
for each $i \in \{0, 1, 2, \ldots, \}$,
cut~$i$ relates $\pi_i$ and $\pi_{i+1}$ via the cut equation
$\pi_i q_{i, i+1} = \pi_{i+1} q_{i+1, i}$,
so that nodes~$i$ and~$i+1$ are on an S-product-form relationship.
\add{We obtain a closed-form expression for the stationary distribution up to a positive multiplicative constant:
	\begin{align*}
		\pi_i = \frac{q_{i-1, i}}{q_{i, i-1}} \frac{q_{i-2, i-1}}{q_{i-1, i-2}}
		\cdots \frac{q_{0, 1}}{q_{1, 0}} \pi_0,
		\quad \text{for each $i \in \{0, 1, 2, \ldots\}$}.
\end{align*}}%
Although the\remove{se} cut equations follow from the balance equations
$\pi_i (q_{i, i-1} \indicator{i \ge 1} + q_{i, i+1})
= \pi_{i-1} q_{i-1, i} \indicator{i \ge 1} + \pi_{i+1} q_{i+1, i}$ for $i \in \{0, 1, 2, \ldots\}$,
they allow us to \replace{explicitate}{derive} the stationary distribution more directly.
As a slightly more intricate toy example,
in \Cref{fig:jaf-cuts},
the cut $(\{0, 4\}, \{1, 2, 3, 5, 6\})$
implies $\pi_4 (q_{4, 1} + q_{4, 5}) = \pi_1 q_{1, 0}$,
hence nodes~$1$ and~$4$ are on an S-product-form relationship.

In general, a cut equation~\eqref{eq:cut} as given in \Cref{lem:cut-equations}
is more convenient \replace{that}{than} the balance equations~\eqref{eq:balance-def}
if the set of nodes~$i$ such that $\pi_i$ appears
on either side of the equation is small.
This set is called the \emph{source} of the corresponding cut,
as it consists of the nodes that are the sources of the edges that cross the cut.

\begin{definition}[Source] \label{def:source}
	Consider a formal Markov chain $G = (V, E)$.
	The source of a cut $(A, B)$ of the graph~$G$
	is the pair $(I, J)$ defined by
	\begin{align} \label{eq:sourced-cut}
		I = \{i \in A: E \cap (\{i\} \times B) \neq \emptyset\},
		\qquad \qquad
		J = \{j \in B: E \cap (\{j\} \times A) \neq \emptyset\}.
	\end{align}
	Equivalently, $(A, B)$ is called an \replace{$(I, J)$}{$I, J$}-sourced cut.
\end{definition}

\add{Considering again the examples above,
	cut~$i \in \{0, 1, 2, \ldots\}$ in \Cref{fig:bd} has source $(i, i+1)$,
	cut~1 in \Cref{fig:jaf-cuts} has source $(1, 4)$,
	and cut~2 in \Cref{fig:jaf-cuts} has source $(\{1, 4\}, 2)$.}
In \Cref{sec:s-product-form},
we will focus on the special case
where the source sets $I$ and $J$ are both singletons, $I = \{i\}$ and $J = \{j\}$.
We refer to such a cut as a \emph{single-sourced} cut.
\add{All cuts in \Cref{fig:bd} are single-sourced cuts,
	and so are cuts 0, 1, 3, and 5 in \Cref{fig:jaf-cuts}.}

\subsection{Joint-ancestor freeness} \label{sec:jaf}

In \Cref{def:jaf} below,
we identify a simpler condition,
called \emph{joint-ancestor freeness},
that we will prove to be necessary and sufficient for
the existence of a cut with a particular source pair
in \Cref{sec:s-product-form-2}.

\begin{definition}[Joint-ancestor freeness] \label{def:jaf}
	Consider a formal Markov chain $G = (V, E)$
	and two disjoint nonempty sets $I, J \subsetneq V$.
	A node~$k \in V$ is a \emph{joint ancestor} of node sets~$I$ and~$J$
	if $k \in A_I(G {\setminus} J) \cap A_J(G {\setminus} I)$,
	i.e., there is both a path from node~$k$ to some node in~$I$ that avoids set~$J$
	and a path from node~$k$ to some node in~$J$ that avoids set~$I$.
	Node sets~$I$ and~$J$ are said to be \emph{joint-ancestor free}
	if $A_I(G {\setminus} J) \cap A_J(G {\setminus} I) = \emptyset$.
	In the special case where $I$ and/or $J$ are singletons,
	we drop the curly brackets in the notation,
	e.g., we write $A_i(G {\setminus} J)$ for $A_{\{i\}}(G {\setminus} J)$.
\end{definition}

To make this definition more concrete,
lets us again consider the birth-and-death process
of \Cref{fig:bd}.
Focusing on $I = \{2\}$ and $J = \{3\}$,
we have $A_2(G {\setminus} 3) = \{0, 1, 2\}$
and $A_3(G {\setminus} 2) = \{3, 4, 5, \ldots\}$,
so that nodes~2 and~3 are joint-ancestor free.
To see why $A_2(G {\setminus} 3) = \{0, 1, 2\}$,
it suffices to observe that
the subgraph $G {\setminus} 3$ consists of two strongly connected components:
$\{0, 1, 2\}$ and $\{4, 5, 6, \ldots\}$.
\replace{Anticipating over}{Anticipating} \Cref{prop:jaf-cuts-1} below,
we observe that 
$(A_2(G {\setminus} 3), A_3(G {\setminus} 2))
= (\{0, 1, 2\}, \{3, 4, 5, \ldots\})$
is exactly Cut~2 in \Cref{fig:bd}.
Similarly, $I = \{1\}$ and $J = \{2, 4\}$
are joint-ancestor free because
$A_1(G {\setminus} \{2, 4\}) = \{0, 1\}$
and $A_{\{2, 4\}}(G {\setminus} 1) = \{2, 3, 4, \ldots\}$.
On the contrary, nodes~1 and~3 are not joint-ancestor free
because $A_1(G {\setminus} 3) = \{0, 1, 2\}$
and $A_3(G {\setminus} 1) = \{2, 3, 4, \ldots\}$
have non-empty intersection $\{2\}$.

The \textsc{MutuallyAvoidingAncestors} procedure
in \Cref{algo:jaf}
returns the joint-ancestor sets $A_I(G {\setminus} J)$ and $A_J(G {\setminus} I)$
in time $O(|E|)$ in a finite graph $G$,
by calling the \textsc{Ancestors} procedure
from \Cref{algo:ancestors}.
\textsc{MutuallyAvoidingAncestors} can be used to test
if two node sets are joint-ancestor free.

\begin{algorithm}[ht]
	\caption{Returns the mutually-avoiding ancestors of two node sets}
	\label{algo:jaf}
	\begin{algorithmic}[1]
		\Procedure{MutuallyAvoidingAncestors}{finite directed graph $G = (V, E)$, disjoint non\-empty sets~$I, J \subseteq V$}
		$\to$ ancestor sets $A_I(G {\setminus} J), A_J(G {\setminus} I)$
		\State $A_I \gets$ \Call{Ancestors}{$I$, $G {\setminus} J$}
		\State $A_J \gets$ \Call{Ancestors}{$J$, $G {\setminus} I$}
		\State \Return $A_I, A_J$
		\EndProcedure
	\end{algorithmic}
\end{algorithm}

\section{S-product-form, cuts, and joint-ancestor freeness}
\label{sec:s-product-form}

In this section, we focus on the S-product-form relationship
introduced in \Cref{sec:product-form-def}.
\Cref{theo:s-product-form}, the main result of this section,
is stated in \Cref{sec:s-product-form-main}
and illustrated on toy examples in \Cref{sec:s-product-form-examples}.
The proof of \Cref{theo:s-product-form}
relies on intermediary results shown in
\Cref{sec:s-product-form-1,sec:s-product-form-2,sec:s-product-form-3}.
Higher-order product-form relationships,
such as PS-product-form,
will be considered in \Cref{sec:higher-levels}.

\subsection{Main theorem} \label{sec:s-product-form-main}

\Cref{theo:s-product-form} below is our first main contribution:
it gives simple necessary and sufficient conditions
under which two nodes are in an S-product-form relationship.
This result relies on the two graph-based notions introduced earlier,
namely, cuts with a given source node (\Cref{sec:cuts})
and joint-ancestor freeness (\Cref{sec:jaf}).
The rest of \Cref{sec:s-product-form} will give further insights into this result.

\begin{theorem} \label{theo:s-product-form}
	Consider a formal Markov chain~$G = (V, E)$
	and let~$i, j \in V$.
	The following statements are equivalent:
	\begin{enumerate}[(i)]
		\item \label{item:S-product-form-1}
		Nodes~$i$ and~$j$ are in an S-product-form relationship.
		\item \label{item:S-product-form-2}
		There is an $i, j$-sourced cut.
		\item \label{item:S-product-form-3}
		Nodes~$i$ and $j$ are joint-ancestor free.
	\end{enumerate}
	If these statements are true, then
	the S-product-form between nodes~$i$ and~$j$ has factors
	\begin{align}
		\label{eq:first-level-product-form}
		f_{i, j} = \sum_{\substack{k \in A_j(G {\setminus} i): \\ (i, k) \in E}} q_{i, k}
		\quad \text{and} \quad
		f_{j, i} = \sum_{\substack{k \in A_i(G {\setminus} j): \\ (j, k) \in E}} q_{j, k}.
	\end{align}
\end{theorem}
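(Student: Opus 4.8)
The plan is to prove the cycle of implications (iii) $\Rightarrow$ (ii) $\Rightarrow$ (i) $\Rightarrow$ (iii) and to read off the explicit factors \eqref{eq:first-level-product-form} from the cut constructed along the way. The implication (ii) $\Rightarrow$ (i) is the quickest: given an $i,j$-sourced cut $(A,B)$ with $i \in A$ and $j \in B$, I apply the cut equation \eqref{eq:cut} of \Cref{lem:cut-equations}. By \Cref{def:source}, the only edges crossing from $A$ to $B$ emanate from $i$ and the only edges crossing from $B$ to $A$ emanate from $j$, so \eqref{eq:cut} collapses to $\pi_i \sum_{k \in N_i \cap B} q_{i,k} = \pi_j \sum_{k \in N_j \cap A} q_{j,k}$, which is exactly an S-product-form relationship with $S_{i,j} = N_i \cap B$ and $S_{j,i} = N_j \cap A$.

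The combinatorial heart is (iii) $\Rightarrow$ (ii), for which the natural candidate cut is $A = A_i(G \setminus j)$ and $B = A_j(G \setminus i)$. I first check that this really is a cut, i.e.\ that $A$ and $B$ partition $V$. Joint-ancestor freeness gives $A \cap B = \emptyset$, and here strong connectivity does the crucial work for $A \cup B = V$: for any node $x$ there is a path from $x$ to $i$; if it avoids $j$ then $x \in A$, and otherwise it factors as a path from $x$ to $j$ followed by a path from $j$ to $i$, whose prefix (a path from $x$ to $j$ avoiding $i$, since its nodes are distinct) witnesses $x \in A_j(G \setminus i) = B$. To see the cut is $i,j$-sourced, suppose some $u \in A \setminus \{i\}$ had an edge to a node $v \in B$. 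Concatenating the edge $(u,v)$ with a path from $v$ to $j$ avoiding $i$ gives a walk from $u$ to $j$ avoiding $i$ (it avoids $i$ because $u \neq i$), hence a path showing $u \in A_j(G \setminus i)$; combined with $u \in A_i(G \setminus j)$, this makes $u$ a joint ancestor, contradicting (iii). The symmetric argument rules out edges from $B \setminus \{j\}$ into $A$. Evaluating the factors of the previous paragraph on this canonical cut gives $S_{i,j} = N_i \cap A_j(G \setminus i)$ and $S_{j,i} = N_j \cap A_i(G \setminus j)$, which is precisely \eqref{eq:first-level-product-form}.

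The main obstacle is the remaining implication (i) $\Rightarrow$ (iii), whose contrapositive is \Cref{theo:no-product-form}: the presence of a joint ancestor must rule out \emph{every} S-product-form relationship, not merely the canonical one. This is the genuinely algebraic direction, since it concerns the stationary measure $\pi$ as a rational function of the free rates $q_{i,j}$ rather than the combinatorics of a single cut. I would attack it by expressing $\pi_i$ and $\pi_j$ through the Markov-chain tree theorem as weighted sums over spanning in-trees and analyzing the polynomial identity $\pi_i f_{i,j} = \pi_j f_{j,i}$ forced by an S-relationship; a joint ancestor $k$ produces a branching — simultaneous reachability of $i$ and $j$, each avoiding the other — that contributes monomials which no single-source sum $f_{i,j}$ supported on $N_i$ can balance, yielding the required contradiction.

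As a useful sanity check that also clarifies the logic, I would additionally verify (ii) $\Rightarrow$ (iii) directly, which together with (iii) $\Rightarrow$ (ii) cleanly isolates the equivalence of the two graph-theoretic conditions. Arguing by contraposition, if $k$ is a joint ancestor and $(A,B)$ is any $i,j$-sourced cut, then $k$ lies in $A$ or $B$: if $k \in A$, the path from $k$ to $j$ avoiding $i$ must cross from $A$ to $B$, but every crossing edge has source $i$, contradicting that the path avoids $i$; the case $k \in B$ is symmetric using the path from $k$ to $i$ avoiding $j$. This confirms that (ii) and (iii) are interchangeable, so the hard algebraic content is confined entirely to (i) $\Rightarrow$ (iii).
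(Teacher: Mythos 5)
Your implications (ii) $\Rightarrow$ (i), (iii) $\Rightarrow$ (ii), and (ii) $\Rightarrow$ (iii) are correct and essentially identical to the paper's treatment (\Cref{lem:cut-implies-product-form}, \Cref{lem:jaf-cuts}, and \Cref{prop:jaf-cuts-1}): you build the same canonical cut $(A_i(G \setminus j), A_j(G \setminus i))$, use strong connectivity the same way to show it partitions $V$, and derive sourcedness by the same contradiction; the factors \eqref{eq:first-level-product-form} then read off exactly as in the paper.

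The genuine gap is (i) $\Rightarrow$ (iii), which you rightly call the main obstacle but then only sketch. ``Express $\pi$ via the Markov-chain tree theorem and argue that a joint ancestor contributes monomials that no single-source sum can balance'' is a plan, not a proof: the asserted imbalance is precisely the content that must be established, and it is not obvious --- every spanning in-tree rooted at $i$ (and at $j$) contains exactly one edge leaving the joint ancestor $k$, so the variables $q_{k,\cdot}$ occur on \emph{both} sides of the putative identity $\pi_i f_{i,j} = \pi_j f_{j,i}$, and ruling out cancellation requires real combinatorial work that you have not done. Moreover, the tree theorem is a finite-state tool, whereas the theorem is stated for formal Markov chains on countable, possibly infinite, graphs, where $\pi$ is only a stationary measure and tree sums need not even be well defined. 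The paper's proof (\Cref{theo:no-product-form}) takes a different and fully executed route: it selects a joint ancestor $k$ whose shortest paths to $i$ and $j$ are disjoint, constructs two explicit numerical instantiations $q^{\mathsf{a}}$ and $q^{\mathsf{b}}$ of the rates that differ only on edges leaving $k$, proves both are positive recurrent via the Foster--Lyapunov criterion, and shows --- through the embedded subchain on $\{i, j, k\}$ --- that the ratio $\pi_i/\pi_j$ differs under the two instantiations. Since an S-product-form relationship would force this ratio to depend only on rates leaving $i$ and $j$ (which are identical in $q^{\mathsf{a}}$ and $q^{\mathsf{b}}$), this yields the contradiction. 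Some argument of this kind, or a completed and infinite-state-compatible version of your polynomial argument, is needed before your proof is complete.
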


\begin{proof}
	The implication \eqref{item:S-product-form-2} $\implies$ \eqref{item:S-product-form-1}
	is a classical result that will be recalled
	in \Cref{lem:cut-implies-product-form} in \Cref{sec:s-product-form-1}.
	The equivalence \eqref{item:S-product-form-2} $\iff$ \eqref{item:S-product-form-3}
	will be shown in \Cref{prop:jaf-cuts-1} in \Cref{sec:s-product-form-2}.
	The implication \eqref{item:S-product-form-1} $\implies$ \eqref{item:S-product-form-3}
	will be shown in \Cref{theo:no-product-form} in \Cref{sec:s-product-form-3}.
	Lastly, Equation~\eqref{eq:first-level-product-form} follows for instance
	by combining \Cref{lem:cut-implies-product-form} and \Cref{prop:jaf-cuts-1}.
\end{proof}

The equivalence between conditions
\eqref{item:S-product-form-1} and \eqref{item:S-product-form-2}
is reminiscent of classical sufficient conditions
on the existence of a product-form relationship,
except that the focus is now on the transition graph
rather than on the transition rates.
Now, the equivalence between conditions
\eqref{item:S-product-form-2} and \eqref{item:S-product-form-3}
can be intuitively understood as follows.
If two nodes $i, j \in V$ are joint-ancestor free,
meaning that  $A_i(G {\setminus} j) \cap A_j(G {\setminus} i) = \emptyset$,
then one can verify that $(A_i(G {\setminus} j), A_j(G {\setminus} j))$
forms a cut and that its source nodes are $i$ and $j$.
On the contrary, if $i$ and $j$ are not joint-ancestor free,
there exists $k \in V {\setminus} \{i, j\}$
such that there are two paths $P(k \to i {\setminus} j)$
and $P(k \to j {\setminus} i)$.
The existence of these two paths
precludes any cut $(A, B)$
from having source \replace{$i, j$}{$(i, j)$}.
Indeed, assuming for example that $i, k \in A$ and $j \in B$,
the path $P(k \to j {\setminus} i)$
needs to go from part~$A$ (containing~$k$)
to part~$B$ (containing~$j$),
and it cannot do so via node~$i$
because this is an $i$-avoiding path.
Therefore, the source of part~$A$ cannot be reduced to node~$i$.

Thanks to \Cref{theo:s-product-form},
we can directly apply procedure \textsc{MutuallyAvoidingAncestors} from \Cref{algo:jaf}
to verify if two nodes~$i$ and~$j$ are in an S-product-form relationship
and, if yes, compute the corresponding factors,
all with time complexity $O(|E|)$.
This is far more efficient than directly testing each cut in the graph to see if its sources are $i$ and $j$:
there are $2^{|V|}$ such cuts, each of which would take $|E|$ time to check.
Testing the S-product-form relationship of all pairs of nodes in the graph
can be done in time $O(|V|^2 |E|)$
(also see \Cref{sec:cut-graph}).

Note that Theorem 2 implies that if nodes $i$ and $j$ are in an S-product-form relationship,
then the sets $S_{i, j}$ and $S_{j, i}$ as defined in
\replace{\cref{def:product-form-distribution}}{\Cref{sec:product-form-def}}
must be disjoint,
as these sets are subsets of the disjoint ancestry sets $A_j(G {\setminus} i)$ and $A_i(G {\setminus} j)$,
as specified in \eqref{eq:first-level-product-form}.

\subsection{Illustrative examples} \label{sec:s-product-form-examples}

Before we prove the intermediary results
that appear in the proof of \Cref{theo:s-product-form},
let us illustrate this connection between single-sourced cuts, joint-ancestor freeness, and product-form relationships\add{.}\remove{with a few toy examples, shown in Figures~1 and~3. We first revisit the birth-and-death process already discussed in Sections~2.3 and~2.4. For a more sophisticated example involving S-product-form, please see Section~5.1.}

\paragraph*{\add{Toy examples}}

\add{We first revisit the birth-and-death process of \Cref{fig:bd} already discussed in \Cref{sec:cuts,sec:jaf}, and then we consider the toy examples shown in \Cref{fig:illustrative-examples}.}

\begin{example}[Birth-and-death process] \label{ex:birth-and-death}
	Consider a formal Markov chain $G = (V, E)$ with
	$V = \{0, 1, 2, \ldots\}$ and $E = \bigcup_{i \in V} \{(i, i+1), (i+1, i)\}$,
	as in \Cref{fig:bd}.
	For each $i \in V$,
	nodes~$i$ and~$i+1$ are in an S-product-form relationship
	through the $i, i+1$-sourced cut formed by
	$A_i(G {\setminus} i+1) = \{1, 2, \ldots, i\}$
	and $A_{i+1}(G {\setminus} i) = \{i+1, i+2, \ldots\}$.
	However, for each $i, j \in V$
	such that $i \le j - 2$,
	nodes~$i$ and~$j$ are not in an S-product-form relationship
	because $A_i(G {\setminus} j) = \{0, 1, 2, \ldots, j-1\}$
	and $A_j(G {\setminus} i) = \{i+1, i+2, \ldots, n\}$
	intersect at
	$A_i(G {\setminus} j) \cap A_j(G {\setminus} i)
	= \{i+1, i+2, \ldots, j-1\}$.
	\add{Observe that, unlike the next two examples, this process is (time-)reversible when it is positive-recurrent. Reversibility will be discussed again in \Cref{ex:tree}.}
\end{example}

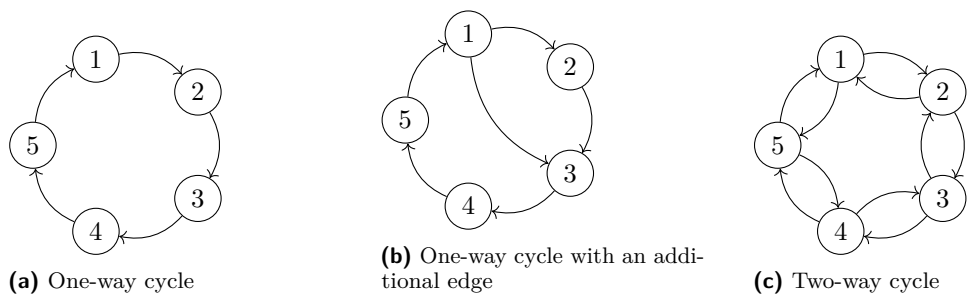
\begin{figure}[ht]
	\begin{subfigure}{.3\linewidth}
		\begin{tikzpicture}
			\foreach \a in {1, 2, ..., 5} {
				\node[state] (\a) at (180-\a*360/5: 1.2cm) {\a};
			}
			\foreach \a/\b in {1/2, 2/3, 3/4, 4/5, 5/1} {
				\draw (\a) edge[->, bend left] (\b);
			}
		\end{tikzpicture}
		\caption{One-way cycle}
		\label{fig:one-way-cycle}
	\end{subfigure}
	\hfill
	\begin{subfigure}{.3\linewidth}
		\begin{tikzpicture}
			\foreach \a in {1, 2, ..., 5} {
				\node[state] (\a) at (180-\a*360/5: 1.2cm) {\a};
			}
			\foreach \a/\b in {1/2, 2/3, 3/4, 4/5, 5/1} {
				\draw (\a) edge[->, bend left] (\b);
			}
			\draw (1) edge[->, bend right] (3);
		\end{tikzpicture}
		\caption{One-way cycle with an additional edge}
		\label{fig:one-way-cycle-with-edge}
	\end{subfigure}
	\hfill
	\begin{subfigure}{.3\linewidth}
		\begin{tikzpicture}
			\foreach \a in {1, 2, ..., 5} {
				\node[state] (\a) at (180-\a*360/5: 1.2cm) {\a};
			}
			\foreach \a/\b in {1/2, 2/3, 3/4, 4/5, 5/1} {
				\draw (\a) edge[->, bend left] (\b);
				\draw (\b) edge[->, bend left] (\a);
			}
		\end{tikzpicture}
		\caption{Two-way cycle}
		\label{fig:two-way-cycle}
	\end{subfigure}
	\caption{Illustrative examples of S-product-form relationships.}
	\label{fig:illustrative-examples}
\end{figure}

\begin{example}[One-way cycle] \label{ex:one-way-cycle}
	Consider a formal Markov chain $G = (V, E)$ with
	$V = \{1, 2, \ldots, n\}$ for some $n \ge 3$
	and $E = \{(i, i+1) | i \in V\} \cup E'$,
	where $E' \subseteq \{(i, i) | i \in V\}$,
	with the convention that nodes are numbered modulo~$n$.
	For instance, \Cref{fig:one-way-cycle}.
	For each $i, j \in V$, say with $i < j$,
	the sets
	$A_i(G {\setminus} j)= \{j+1, j+2, \ldots, n, 1, 2, \ldots, i\}$
	and $A_j(G {\setminus} i) = \{i+1, i+2, \ldots, j\}$
	are disjoint and therefore form an $i, j$-sourced cut.
	\Cref{theo:s-product-form} implies that
	nodes~$i$ and~$j$ are in an S-product-form relationship with
	$f_{i, j} = q_{i, i+1}$ and $f_{j, i} = q_{j, j+1}$.
	Equivalently, we can check visually
	that there is no node from which there is a directed path
	to node~$i$ without visiting node~$j$ and vice versa.
	This example shows in particular that an $i, j$-sourced cut may exist
	even if there is neither an edge~$(i, j)$ nor an edge~$(j, i)$.
	\add{\Cref{theo:clique} in Appendix~\ref{app:clique} shows that the one-way cycle is the \emph{only} finite formal Markov chain in which all pairs of nodes are on an S-product-form relationship.}
\end{example}

\begin{example}[One-way cycle with an additional edge] \label{ex:one_way_cycle}
	Consider the same directed cycle,
	but with an additional edge
	from node~$1$ to some node~$k \in \{3, 4, \ldots, n\}$.
	For instance, \Cref{fig:one-way-cycle-with-edge}.
	For each $i \in \{2, 3, \ldots, k - 1\}$
	and $j \in \{k, k + 2, \ldots, n\}$,
	nodes~$i$ and~$j$ are no longer joint-ancestor free because
	$1 \in A_i(G {\setminus} j) \cap A_j(G {\setminus} i)$.
	Indeed,
	$1 \to 2 \to 3 \to \cdots \to i$
	is a $P(1 \to i {\setminus} j)$ path
	and
	$1 \to k \to k + 1 \to \dots \to j$
	is a $P(1 \to j {\setminus} i)$ path.
	Therefore, nodes~$i$ and~$j$ are no longer
	in an S-product-form relationship.
\end{example}

\begin{example}[Two-way cycle]
	Lastly, consider a directed graph $G = (V, E)$ with
	$V = \{1, 2, \ldots, n\}$ for some $n \ge 3$
	and $E = \bigcup_{i \in V} \{(i, i+1), (i+1, i)\}$,
	again with the convention that nodes are numbered modulo~$n$.
	For instance, \Cref{fig:two-way-cycle}.
	For each $i, j \in V$, the sets
	$A_i(G {\setminus} j) = V {\setminus} \{j\}$
	and $A_j(G {\setminus} i) = V {\setminus} \{i\}$
	have nonempty intersection
	$A_i(G {\setminus} j) \cap A_j(G {\setminus} i) = V {\setminus} \{i, j\}$.
	Hence, there are no S-product-form relationships in this graph.
\end{example}

\paragraph*{\add{Relations with other product-form or structural conditions}}

\add{We now consider two larger examples that illustrate \replace{graph-based product-form}{graph-based product form} and its relationship to other structural properties of Markov chains. The reader is also invited to take a look at \Cref{sec:examples} for examples of Markov chains that model actual queueing systems and exhibit S-product-form relationships.}

\begin{example}[\add{Trees and reversibility}] \label{ex:tree}
	\add{The following extension of birth-and-death processes again has an S-product-form stationary distribution and is reversible when it is positive-recurrent;
		the result is from \cite[Lemma~1.5]{kelly} and \cite[Theorem~2.2]{serfozo}.
		Consider a formal Markov chain $G = (V, E)$
		and let $H$ denote its communication graph,
		that is, the undirected graph $H = (V, F)$ with
		$F = \{\{i, j\} \subseteq V: (i, j) \in E \text{ or } (j, i) \in E\}$.
		Assume that (i) $H$ is a tree, and
		(ii) for each $i, j \in V$, we have $(i, j) \in E$ if and only if $(j, i) \in E$.
		Then, for each $i, j \in V$
		such that $\{i, j\} \in H$,
		nodes~$i$ and~$j$ are on a particularly simple form of S-product-form relationship
		given by $\pi_i q_{i, j} = \pi_j q_{j, i}$.
		Indeed, Assumptions~(i) and~(ii) imply that
		removing edges $(i, j)$ and $(j, i)$ from~$G$
		divides~$G$ into two strongly connected subgraphs.
		Letting $A_i$ (resp.\ $A_j$) denote the set of nodes
		in the part containing node~$i$ (resp.\ $j$),
		we conclude not only that $(A_i, A_j)$ is an $i, j$-sourced cut,
		but also that the only edges across this cut are $(i, j)$ and $(j, i)$.
		\Cref{lem:cut-equations} then yields the product-form relationship $\pi_i q_{i, j} = \pi_j q_{j, i}$.}
	
	\add{Conversely, one can verify that the only formal Markov chains
		in which the S-product-form relationship
		implies reversibility (when the chain is positive recurrent)
		for all transition rates
		are those satisfying Assumptions~(i) and~(ii).
		Other reversible and quasi-reversible queueing models do not have an S-product-form stationary distribution (nor any sort of \replace{graph-based product-form}{graph-based product form}).
		Focusing on Whittle networks~\cite[Chapter~1]{serfozo}, one can verify that, except for a single-queue network (that forms a birth-and-death process), the product-form of the distribution holds only because the departure rates satisfy the so-called \emph{balance property}. Similarly, an order-independent queue~\cite{K11} with a single class forms a birth-and-death process, but an order-independent queue with at least two classes only has a product-form stationary distribution because the departure rates satisfy the so-called \emph{order-independence condition}.}
\end{example}

\begin{example}[\add{Quasi birth-and-death process}] \label{ex:qbd}
	\add{Another possible extension of the birth-and-death process of \Cref{ex:birth-and-death}
		is a Markov chain whose structure is a particular case of a
		quasi-birth-and-death (QBD) process~\cite{H93,L99,P11}.
		Consider a formal Markov chain $G = (V, E)$
		and assume $V$ to be finite for simplicity.
		Assume that $V$ can be partitioned into subsets $V_0, V_1, V_2, \ldots, V_m$,
		called superstates,
		such that transitions across superstates occur only
		between neighboring superstates,
		and only via designated states.
		More formally, assume that,
		for each $j \in \{0, 1, \ldots, m-1\}$,
		there exist $i_{j, \text{up}} \in V_j$ and $i_{j+1, \text{down}} \in V_{j+1}$
		such that
		\begin{align*}
			E \cap \left(V_j \times \bigcup_{k = j+1}^m V_k \right)
			&= \{i_{j, \text{up}}\} \times V_{j+1}, &
			E \cap \left(V_{j+1} \times \bigcup_{k = 0}^j V_k \right)
			&= \{i_{j+1, \text{down}}\} \times V_j.
		\end{align*}
		For instance, the toy example of \Cref{fig:jaf-cuts}
		fits this structure with 
		$V_0 = \{0, 4\}$, $V_1 = \{1, 5\}$, $V_2 = \{2, 6\}$, and $V_3 = \{3\}$,
		$i_{j, \text{up}} = j + 4$ for each $j \in \{0, 1, 2\}$, and
		$i_{j, \text{down}} = j$ for each $j \in \{1, 2, 3\}$.
		In general, for each $j \in \{0, 1, \ldots, m-1\}$,
		states $i_{j, \text{up}}$ and $i_{j+1, \text{down}}$ are on an S-product-form relationship,
		as the previous equation implies that
		$(\bigcup_{k = 0}^j V_j, \bigcup_{k = j+1}^m V_j)$
		is an $i_{j, \text{up}}, i_{j+1, \text{down}}$-sourced cut,
		so that
		\begin{align*}
			\pi_{i_{j, \text{up}}} \sum_{k \in V_{j+1}} q_{i_{j, \text{up}}, k}
			&= \pi_{i_{j+1, \text{down}}} \sum_{k \in V_j} q_{i_{j+1, \text{down}}, k}.
		\end{align*}
		While in general deriving the stationary probabilities $\pi_i$ for each $i \in V$
		requires solving several systems of linear equations (one for each superstate),
		we will see in
		\Cref{sec:msj-example,sec:batch1}
		that in some cases, graph-based product form can also be applied
		to derive the complete stationary distribution,
		without resorting to linear algebra.
	}
\end{example}

\subsection{The existence of a single-sourced cut implies S-product-form} \label{sec:s-product-form-1}

\Cref{lem:cut-implies-product-form} below shows the implication
\eqref{item:S-product-form-2} $\implies$ \eqref{item:S-product-form-1}
from \Cref{theo:s-product-form}.
This result is recalled for completeness,
but it follows directly by combining
the definition of a sourced cut (\Cref{def:source})
with \Cref{lem:cut-equations}.

\begin{lemma}
	\label{lem:cut-implies-product-form}
	Consider a formal Markov chain~$G = (V, E)$
	and two nodes~$i, j \in V$.
	If there is an $i, j$-sourced cut in the graph~$G$,
	then these nodes are in an S-product-form relationship~\eqref{eq:product-form},
	with factors as given in~\eqref{eq:first-level-product-form}.
\end{lemma}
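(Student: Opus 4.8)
The plan is to apply the cut equation of \Cref{lem:cut-equations} to a given $i,j$-sourced cut $(A,B)$ and read off the S-product-form relationship directly from the definition of the source. By \Cref{def:source}, the source of $(A,B)$ being $(\{i\}, \{j\})$ forces $i \in A$, $j \in B$, and moreover $i$ is the \emph{only} node of~$A$ with an outgoing edge into~$B$, while $j$ is the only node of~$B$ with an outgoing edge into~$A$. Consequently, in the cut equation~\eqref{eq:cut} every term on the left-hand side has source node~$i$ and every term on the right-hand side has source node~$j$, so the equation collapses to
\begin{align*}
	\pi_i \sum_{\substack{k \in B: \\ (i, k) \in E}} q_{i, k}
	= \pi_j \sum_{\substack{k \in A: \\ (j, k) \in E}} q_{j, k}.
\end{align*}
This is precisely an S-product-form relationship~\eqref{eq:product-form} with $S_{i,j} = N_i \cap B$ and $S_{j,i} = N_j \cap A$.

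It remains to identify these summation index sets with the ones in~\eqref{eq:first-level-product-form}, i.e.\ to show $N_i \cap B = N_i \cap A_j(G {\setminus} i)$ and, symmetrically, $N_j \cap A = N_j \cap A_i(G {\setminus} j)$. I would first prove $B \subseteq A_j(G {\setminus} i)$ by a first-crossing argument: given $w \in B$, strong connectivity provides a path from~$w$ to~$i$, which must leave~$B$ for the first time across an edge whose source is~$j$ (the unique source of $B \to A$ edges); the prefix of this path up to~$j$ stays in~$B$, hence avoids~$i$, and witnesses $w \in A_j(G {\setminus} i)$. This gives $N_i \cap B \subseteq N_i \cap A_j(G {\setminus} i)$. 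For the reverse inclusion, if $k \in N_i \cap A_j(G {\setminus} i)$ lay in~$A$, then the $i$-avoiding path from~$k$ to~$j$ would have to cross from~$A$ to~$B$ through the unique source~$i$, a contradiction; hence $k \in B$. The symmetric claims for~$A$ and $A_i(G {\setminus} j)$ follow identically.

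The step I expect to be the crux is this last matching to the canonical factors~\eqref{eq:first-level-product-form}, since the cut $(A,B)$ is arbitrary among $i,j$-sourced cuts whereas~\eqref{eq:first-level-product-form} is cut-independent; the argument shows, in passing, that the out-neighbors of~$i$ lying in~$B$ are the same for every such cut, and strong connectivity of the formal Markov chain is essential to it. A lighter alternative, which is the route indicated in the proof of \Cref{theo:s-product-form}, is to defer this identification: the collapse above already yields the S-product-form relationship, and applying it to the specific cut $(A_i(G {\setminus} j), A_j(G {\setminus} i))$ produced by \Cref{prop:jaf-cuts-1}---for which $A = A_i(G {\setminus} j)$ and $B = A_j(G {\setminus} i)$ by construction---reproduces~\eqref{eq:first-level-product-form} verbatim.
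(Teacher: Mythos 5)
Your proposal is correct, and its core step is exactly the paper's: the paper justifies this lemma by noting it "follows directly by combining the definition of a sourced cut (\Cref{def:source}) with \Cref{lem:cut-equations}," which is precisely your collapse of the cut equation~\eqref{eq:cut} to $\pi_i \sum_{k \in N_i \cap B} q_{i,k} = \pi_j \sum_{k \in N_j \cap A} q_{j,k}$. Where you diverge is in pinning down the factors of~\eqref{eq:first-level-product-form}: the paper defers this to \Cref{prop:jaf-cuts-1}, which says the $i,j$-sourced cut is unique and equals $(A_i(G {\setminus} j), A_j(G {\setminus} i))$, so that $B = A_j(G {\setminus} i)$ and the sums match verbatim; you instead prove directly, for an \emph{arbitrary} $i,j$-sourced cut $(A,B)$, that $N_i \cap B = N_i \cap A_j(G {\setminus} i)$ via first-crossing arguments (which are the same technique used in the paper's proof of \Cref{lem:jaf-cuts}\ref{prop:ancestors-2}, just run in the opposite inclusion direction: you show $B \subseteq A_j(G {\setminus} i)$, whereas \Cref{lem:jaf-cuts} shows $A_j(G {\setminus} i) \subseteq B$). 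Your version is self-contained and establishes in passing that the crossing edges out of~$i$ are the same for every $i,j$-sourced cut, a fact the paper only obtains as a corollary of uniqueness; the paper's version is shorter given that \Cref{prop:jaf-cuts-1} is proved anyway for the equivalence \eqref{item:S-product-form-2} $\iff$ \eqref{item:S-product-form-3} of \Cref{theo:s-product-form}. You correctly identify this lighter route yourself in your closing remark, so the two arguments agree in substance.
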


\subsection{The existence of a single-sourced cut is equivalent to joint-ancestor freeness} \label{sec:s-product-form-2}

We now prove
the equivalence
\eqref{item:S-product-form-2} $\iff$ \eqref{item:S-product-form-3}
from \Cref{theo:s-product-form},
that is, joint-ancestor freeness
is necessary and sufficient
for the existence of a cut with a particular source pair.
We show that this equivalence holds
both when the source pair is a pair \replace{$i, j$}{$(i, j)$} of nodes
(\Cref{prop:jaf-cuts-1})
and more generally for any pair \replace{$I, J$}{$(I, J)$} of source sets
(\Cref{prop:jaf-cuts-2}).
Cuts where the source pair is a general pair of source sets \replace{$I, J$}{$(I, J)$} give rise to
higher-level cuts, as we show and discuss in \Cref{sec:higher-levels}.

\begin{proposition} \label{prop:jaf-cuts-1}
	Consider a formal Markov chain $G = (V, E)$
	and let $i, j \in V$.
	Nodes~$i$ and~$j$ are joint-ancestor free
	if and only if
	there exists an $i, j$-sourced cut.
	In this case, the only $i, j$-sourced cut is
	$(A_i(G {\setminus} j), A_j(G {\setminus} i))$.
\end{proposition}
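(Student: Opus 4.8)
The plan is to prove the sharper statement that \emph{every} $i,j$-sourced cut must coincide with $(A_i(G \setminus j), A_j(G \setminus i))$, and that this particular pair is an $i,j$-sourced cut exactly when $i$ and $j$ are joint-ancestor free. The claimed equivalence and the uniqueness assertion then both fall out at once. The whole argument rests on one elementary closure property of ancestor sets, which I would record first. Writing $R := A_i(G \setminus j)$: if $v \in R$ and $(u,v) \in E$ with $u \neq j$, then $u \in R$, because prepending the edge $(u,v)$ to a $j$-avoiding path from $v$ to $i$ produces a $j$-avoiding \emph{walk} from $u$ to $i$, out of which a simple $j$-avoiding path is extracted by deleting repeated nodes. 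Restated: every edge entering $R$ from outside $R$ has source $j$. The symmetric property holds for $A_j(G \setminus i)$.

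For the direction ``joint-ancestor free $\implies$ an $i,j$-sourced cut exists'', I would set $A := A_i(G \setminus j)$ and $B := A_j(G \setminus i)$ and verify that $(A,B)$ is a cut with source $(\{i\},\{j\})$. Disjointness of $A$ and $B$ is exactly joint-ancestor freeness, and both sets are nonempty since $i \in A$ and $j \in B$. That $A \cup B = V$ uses strong connectivity: for any $k$, take a simple path from $k$ to $i$; if it avoids $j$ then $k \in A$, and otherwise $j$ appears strictly before the endpoint $i$, so the prefix up to the first occurrence of $j$ is an $i$-avoiding path from $k$ to $j$, giving $k \in B$. To control the crossing edges, suppose $(u,v) \in E$ with $u \in A$, $v \in B$ and $u \neq i$; then concatenating $(u,v)$ with an $i$-avoiding path from $v$ to $j$ exhibits $u$ in $A_j(G \setminus i)$ as well as in $A_i(G \setminus j)$, contradicting joint-ancestor freeness. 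Hence every $A$-to-$B$ edge has source $i$, and symmetrically every $B$-to-$A$ edge has source $j$; finally, strong connectivity forces at least one crossing edge in each direction, so the two source sets are precisely $\{i\}$ and $\{j\}$.

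For the converse, and simultaneously for uniqueness, I would start from an arbitrary $i,j$-sourced cut $(A,B)$ and prove $A = A_i(G \setminus j)$, the identity $B = A_j(G \setminus i)$ following by symmetry. The inclusion $A_i(G \setminus j) \subseteq A$ is direct: no node of $B$ can admit a $j$-avoiding path to $i \in A$, since such a path would cross the cut from $B$ to $A$, yet every such crossing edge has source $j$. The reverse inclusion $A \subseteq A_i(G \setminus j)$ is the crux, and I would argue by contradiction. If some $k \in A$ cannot reach $i$ while avoiding $j$, take a path $w_1, \ldots, w_m$ from $k = w_1$ to $i = w_m$ granted by strong connectivity, and let $w_t$ be its first node lying in $R = A_i(G \setminus j)$; then $w_{t-1} \notin R$ is the source of an edge into $R$, so $w_{t-1} = j$ by the closure property. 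Thus the prefix runs from $w_1 = k \in A$ to $w_{t-1} = j \in B$ and must cross the cut from $A$ to $B$; that crossing edge has source $i$, placing a second occurrence of $i$ at some index $s \le t-2 < m$ on a simple path, which is impossible. Once $A = A_i(G \setminus j)$ and $B = A_j(G \setminus i)$ are established, disjointness of $A$ and $B$ yields joint-ancestor freeness, and the two identities yield that $(A_i(G \setminus j), A_j(G \setminus i))$ is the only $i,j$-sourced cut.

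I expect this last inclusion $A \subseteq A_i(G \setminus j)$ to be the main obstacle, as it is the only point where the source restriction of the cut, strong connectivity, and the simplicity of paths must all be invoked together; by contrast, the opposite inclusion, the coverage $A \cup B = V$, and the edge-source checks are comparatively routine once the ancestor-set closure property is available.
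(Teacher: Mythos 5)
Your proof is correct, and it overlaps with the paper's argument in two places: your forward direction (joint-ancestor freeness implies that $\bigl(A_i(G \setminus j), A_j(G \setminus i)\bigr)$ is an $i,j$-sourced cut) and your inclusion $A_i(G \setminus j) \subseteq A$ for an arbitrary $i,j$-sourced cut $(A,B)$ are essentially the paper's \Cref{lem:jaf-cuts} and \Cref{prop:jaf-cuts-2}\ref{prop:ancestors-4}. Where you genuinely diverge is the converse. The paper never proves your reverse inclusion $A \subseteq A_i(G \setminus j)$; it instead argues by contraposition: if $i$ and $j$ are \emph{not} joint-ancestor free, then any $i,j$-sourced cut $(A,B)$ would have to contain the two (intersecting) ancestor sets on its two sides, forcing $A \cap B \neq \emptyset$, a contradiction. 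Its uniqueness claim then follows from the same containment lemma together with the coverage fact $A_i(G \setminus j) \cup A_j(G \setminus i) = V$, which leaves no room on either side. You instead prove the sharper statement that \emph{every} $i,j$-sourced cut coincides exactly with the ancestor pair, using your closure property of $R = A_i(G \setminus j)$ plus the two-occurrences-of-$i$ contradiction on a simple path; joint-ancestor freeness and uniqueness then both fall out of the equality at once. Each route has its merits: the paper's contrapositive is shorter and is phrased so that it generalizes verbatim to set-valued sources $(I,J)$ (\Cref{prop:jaf-cuts-2}, which the paper needs later for second-level cuts), whereas your argument identifies the cut exactly without invoking coverage in the converse, gives a direct rather than contrapositive proof that a cut forces joint-ancestor freeness, and makes explicit the nonemptiness of the source sets (via strong connectivity), a point the paper leaves implicit.
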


\Cref{prop:jaf-cuts-1} is a special case of \Cref{prop:jaf-cuts-2},
which is stated and proved later in this section.
To illustrate the intuition behind \Cref{prop:jaf-cuts-1},
again consider the toy example of \Cref{fig:jaf-cuts}.
Nodes $1$ and $4$ are joint-ancestor free
because $A_1(G {\setminus} 4) = \{1, 2, 3, 5, 6\}$
and $A_4(G {\setminus} 1) = \{0, 4\}$ are disjoint
and therefore form a $1, 4$-sourced cut.
In contrast, nodes~$1$ and~$2$ are not joint-ancestor free
because $A_1(G {\setminus} 2) = \{0, 1, 4\}$
and $A_2(G {\setminus} 1) = \{2, 3, 4, 5, 6\}$
intersect at node~$4$.
Any cut $(A, B)$ such that $1 \in A$ and $2 \in B$
has to be crossed by an edge from a path $P(4 \to 2 {\setminus} 1)$
(if $4 \in A$)
or from a path $P(4 \to 1 {\setminus} 2)$
(if $4 \in B$),
which makes it impossible to build a cut
whose sources are restricted to nodes~$1$ and~$2$.
This intuition is formalized
in Statement~\ref{prop:ancestors-2}
of \Cref{lem:jaf-cuts} below.
\Cref{prop:jaf-cuts-1} implies in particular that
an $i, j$-sourced cut is unique when it exists,
hence we can say \emph{the} $i, j$-sourced cut.

For \Cref{prop:jaf-cuts-2} below, the situation is slightly more complicated.
When considering
joint-ancestor free sets $I, J$ containing more than one node,
cuts may not be unique, and may not have the entire sets $I, J$ as sources.
Nonetheless, there is still a bidirectional relationship between mutually-avoiding ancestor sets
and cut-source sets.

\begin{proposition} \label{prop:jaf-cuts-2}
	Consider a formal Markov chain $G = (V, E)$
	and two disjoint non\-empty sets $I, J \subseteq V$.
	We have the following:
	\begin{enumerate}[(i)]
		\item \label{prop:ancestors-4}
		If $I$ and $J$ are joint-ancestor free,
		then the cut $(A_I(G {\setminus} J), A_J(G {\setminus} I))$
		is an \replace{$(\underline{I}, \underline{J})$}{$\underline{I}, \underline{J}$}-sourced cut, for some non-empty sets $\underline{I} \subseteq I$ and $\underline{J} \subseteq J$.
		\item \label{prop:ancestors-5}
		If $(A_I(G {\setminus} J), A_J(G {\setminus} I))$ is a cut and has source\remove{s} $(I, J)$, then it is the unique cut with sources ($I$, $J$).
		\item \label{prop:ancestors-3}
		If $I$ and $J$ are not joint-ancestor free,
		then there is no \replace{$(I, J)$}{$I, J$}-sourced cut.
	\end{enumerate}
\end{proposition}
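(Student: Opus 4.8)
The plan is to derive all three items from two elementary observations, which I would state and verify up front. \textbf{Crossing fact:} for any cut $(A,B)$, a path that starts in one part and ends in the other must contain a crossing edge, and by \Cref{def:source} the tail of that edge lies in the source of the part it departs. \textbf{Concatenation fact:} set-avoiding reachability does not depend on whether it is witnessed by a walk or by a simple path, so if $(a,b) \in E$ with $a \notin I$ and $b \in A_J(G \setminus I)$, then prepending the edge $(a,b)$ to an $I$-avoiding path out of $b$ gives an $I$-avoiding walk from $a$ to $J$, whence $a \in A_J(G \setminus I)$; the statement with $I$ and $J$ interchanged holds symmetrically. With these in hand, the three items are largely bookkeeping, and I would treat item (i) first, then items (ii) and (iii).

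For item (i), write $A = A_I(G \setminus J)$ and $B = A_J(G \setminus I)$. I would first check $(A,B)$ is a cut: disjointness is exactly joint-ancestor freeness, non-emptiness follows from $I \subseteq A$ and $J \subseteq B$, and coverage uses strong connectivity of the formal Markov chain. Indeed, for any $v \in V$, take a path from $v$ to a node of $I \cup J$ (which exists by strong connectivity) and let $w$ be its first node in $I \cup J$; the prefix up to $w$ avoids $I \cup J$ except at $w$, placing $v \in A$ when $w \in I$ and $v \in B$ when $w \in J$, so $A \cup B = V$. For the source, if $a \in A$ has an out-edge into some $b \in B$ and $a \notin I$, the concatenation fact forces $a \in B$, contradicting $A \cap B = \emptyset$; hence the source of $A$ is contained in $I$, and symmetrically the source of $B$ is contained in $J$. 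Finally, a nontrivial cut of a strongly connected graph has at least one crossing edge in each direction, so both source sets are non-empty, yielding $\underline{I} \subseteq I$ and $\underline{J} \subseteq J$ as required.

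For item (ii), assume $(A,B) = (A_I(G \setminus J), A_J(G \setminus I))$ is a cut with source exactly $(I,J)$ and let $(A',B')$ be any cut with source $(I,J)$; by \Cref{def:source} we have $I \subseteq A'$ and $J \subseteq B'$. To show $A \subseteq A'$, take $a \in A$ with a $J$-avoiding path to some $i \in I \subseteq A'$: were $a \in B'$, the crossing fact would produce a crossing edge of $(A',B')$ whose tail lies in the source $J$ yet sits on a $J$-avoiding path, a contradiction, so $a \in A'$. Symmetrically $B \subseteq B'$, and since both $(A,B)$ and $(A',B')$ partition $V$, the two containments force $A = A'$ and $B = B'$. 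For item (iii), suppose $I,J$ are not joint-ancestor free, witnessed by a joint ancestor $k$ with a $J$-avoiding path to some $i \in I$ and an $I$-avoiding path to some $j \in J$, and suppose toward a contradiction that $(A,B)$ is an $I,J$-sourced cut, so $I \subseteq A$ and $J \subseteq B$. If $k \in A$, the $I$-avoiding path from $k$ to $j \in B$ crosses from $A$ to $B$, so its crossing edge has tail in the source $I$, contradicting the fact that the path avoids $I$; if $k \in B$, the symmetric argument with the $J$-avoiding path from $k$ to $i$ gives the contradiction.

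The step I expect to be the main obstacle is the source-containment argument shared by items (i) and (ii): the concatenation fact is exactly what rules out a ``spurious'' source node, and making it rigorous hinges on the observation that set-avoiding reachability may be witnessed by walks (so that appending an edge is legitimate) together with the case hypothesis $a \notin I$, which is precisely what keeps the appended walk $I$-avoiding. The coverage-of-$V$ argument in item (i) is the only place where strong connectivity is genuinely needed, and I would isolate it as a small claim; everything else reduces to the crossing fact and the definition of a source.
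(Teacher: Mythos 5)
Your proof is correct and takes essentially the same route as the paper's: your ``crossing fact'' and ``concatenation fact'' are exactly the content of \Cref{lem:jaf-cuts}\ref{prop:ancestors-2} and of the edge-prepending argument in the paper's proof of part~\ref{prop:ancestors-4}, and your coverage claim via strong connectivity is \Cref{lem:jaf-cuts}\ref{prop:ancestors-1}. The only differences are cosmetic and welcome: you make explicit the walk-versus-path subtlety when prepending an edge and the non-emptiness of the source sets $\underline{I}, \underline{J}$, both of which the paper leaves implicit.
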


Again considering the example of \Cref{fig:jaf-cuts},
let $I = \{1, 4\}$ and $J = \{2, 5\}$.
The ancestor sets
$A_I(G {\setminus} J) = \{0, 1, 4\}$
and $A_J(G {\setminus} I) = \{2, 3, 5, 6\}$
are disjoint,
and we can verify that they form an
\replace{$(\underline{I}, \underline{J})$ sourced-cut}{$\underline{I}, \underline{J}$-sourced cut}
with $\underline{I} = \{1, 4\} = I$
and $\underline{J} = \{2\} \subsetneq J$.
For a negative example, node sets
$I' = \{1\}$ and $J = \{2, 5\}$ are not joint-ancestor free
because $A_{I'}(G {\setminus} J) = \{0, 1, 4\}$
and $A_{J}(G {\setminus} I') = \{2, 3, 4, 5, 6\}$
intersect at node~$4$.
Correspondingly, one can verify that there is no \replace{$(I', J)$}{$I', J$}-sourced cut in the graph.

Before proving \Cref{prop:jaf-cuts-1,prop:jaf-cuts-2},
we prove the following intermediary lemma,
which will also be instrumental for later results.

\begin{lemma} \label{lem:jaf-cuts}
	Consider a formal Markov chain $G = (V, E)$
	and two disjoint \replace{nonsets}{non-empty sets} $I, J \subsetneq V$.
	We have the following:
	\begin{enumerate}[(i)]
		\item \label{prop:ancestors-1}
		$A_I(G {\setminus} J) \cup A_J(G {\setminus} I) = V$.
		\item \label{prop:ancestors-2}
		If $(A, B)$ is an \replace{$(I, J)$}{$I, J$}-sourced cut,
		then $A_I(G {\setminus} J) \subseteq A$
		and $A_J(G {\setminus} I) \subseteq B$.
	\end{enumerate}
\end{lemma}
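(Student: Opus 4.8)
The plan is to prove the two statements separately, relying on the strong connectivity of $G$ (guaranteed by the definition of a formal Markov chain) for Statement~\ref{prop:ancestors-1} and on the definition of a sourced cut (\Cref{def:source}) for Statement~\ref{prop:ancestors-2}. In both cases the arguments are elementary path-tracing arguments, so the bulk of the work is choosing the right node along a path.

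For Statement~\ref{prop:ancestors-1}, I would fix an arbitrary node $v \in V$ and show it lies in $A_I(G {\setminus} J) \cup A_J(G {\setminus} I)$ by a \emph{first-hitting} argument. If $v \in J$, then $v \in A_J(G {\setminus} I)$ trivially via the length-zero path, noting $v \notin I$ since $I$ and $J$ are disjoint. Otherwise, strong connectivity guarantees a path from $v$ to some node of $I$; I would let $w$ be the first node on this path that belongs to $I \cup J$, which exists because the endpoint lies in $I$. By construction, the subpath from $v$ to $w$ meets $I \cup J$ only at its endpoint $w$. If $w \in I$, this subpath avoids $J$, witnessing $v \in A_I(G {\setminus} J)$; if $w \in J$, it avoids $I$, witnessing $v \in A_J(G {\setminus} I)$. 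Either way $v$ lies in the union, and since the reverse inclusion is immediate, equality with $V$ follows.

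For Statement~\ref{prop:ancestors-2}, suppose $(A, B)$ is an $I, J$-sourced cut; by \Cref{def:source} this means $I \subseteq A$, $J \subseteq B$, every edge from $A$ to $B$ has its source in $I$, and every edge from $B$ to $A$ has its source in $J$. To show $A_I(G {\setminus} J) \subseteq A$, I would take $k \in A_I(G {\setminus} J)$ with a witnessing path $P(k \to i {\setminus} J)$ ending at some $i \in I$, and argue by contradiction that $k \in B$ is impossible. Since such a path starts in $B$ and ends in $I \subseteq A$, it must cross the cut from $B$ to $A$; letting $(v_{p-1}, v_p)$ be the first crossing edge (so $v_{p-1} \in B$ and $v_p \in A$), the source condition forces $v_{p-1} \in J$, contradicting the fact that the path avoids $J$. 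The inclusion $A_J(G {\setminus} I) \subseteq B$ then follows by the symmetric argument, exchanging the roles of $(I, A)$ and $(J, B)$.

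I expect the proof to be largely routine, with the only delicate point being the bookkeeping in the first-hitting argument for Statement~\ref{prop:ancestors-1}: one must select the \emph{first} node of $I \cup J$ along the path (rather than the endpoint) so that the relevant subpath is guaranteed to avoid the complementary set, and one must fold in the degenerate cases $v \in I$ and $v \in J$ (where the witnessing path has length zero) so that they are absorbed cleanly. The contradiction argument for Statement~\ref{prop:ancestors-2} is standard once the source condition from \Cref{def:source} is correctly translated into the two edge-crossing constraints.
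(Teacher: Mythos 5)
Your proposal is correct and follows essentially the same argument as the paper: a first-hitting argument along a path guaranteed by strong connectivity for Statement~(i), and a cut-crossing contradiction using the source condition for Statement~(ii). The only cosmetic difference is that you pick the \emph{first} edge crossing from $B$ to $A$ while the paper picks the \emph{last} node of the path lying in $B$; both choices yield the same contradiction with the path avoiding $J$.
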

\begin{proof}[Proof of \Cref{lem:jaf-cuts}]
	Let us first prove \Cref{lem:jaf-cuts}\ref{prop:ancestors-1}.
	Let $k \in V$.
	Since $G$ is strongly connected
	and $I$ is nonempty,
	there is a directed path
	$v_1, v_2, \ldots, v_n$ in $G$,
	with $n \ge 1$,
	such that $v_1 = k$ and $v_n \in I$.
	Let $p = \min\{q \in \{1, 2, \ldots, n\} | k_q \in I \cup J\}$.
	Then $k \in A_I(G {\setminus} J)$ if $k_p \in I$
	and $k \in A_J(G {\setminus} I)$ if $k_p \in J$.
	Hence, $k \in A_I(G {\setminus} J) \cup A_J(G {\setminus} I)$
	for each $k \in V$, which implies that $V = A_I(G {\setminus} J) \cup A_J(G {\setminus} I)$.
	
	We now prove \Cref{lem:jaf-cuts}\ref{prop:ancestors-2}.
	Assume that $(A, B)$ is an \replace{$(I, J)$}{$I, J$}-sourced cut
	and let $k \in A_I(G {\setminus} J)$:
	there is a directed path $v_1, v_2, \ldots, v_n$
	such that $v_1 = k$, $v_n \in I$,
	and $v_p \notin J$ for each $p \in \{1, 2, \ldots, n\}$.
	Our goal is to prove that $k \in A$.
	If $n = 1$, we have directly $k \in I \subseteq A$.
	Now consider the case where $n \ge 2$.
	Assume for the sake of contradiction that $k \notin A$,
	that is, $k \in B$,
	so that $v_1 = k \in B$ and $v_n \in A$.
	Then we can define
	$p = \max\{q \in \{1, 2, \ldots, n-1\} | v_q \in B\}$,
	and we have $(v_p, v_{p+1}) \in E \cap (B \times A)$.
	We also know by construction of the path that $k_p \notin J$.
	This contradicts our assumption that $J$ is the second source of the cut~$(A, B)$.
	Hence, $k \in A$.
\end{proof}

\begin{proof}[Proof of \Cref{prop:jaf-cuts-1,prop:jaf-cuts-2}]
	\Cref{prop:jaf-cuts-1} is a special case of \Cref{prop:jaf-cuts-2}
	because the only nonempty subset of a singleton is the singleton itself.
	Therefore, in the remainder, we focus on proving \Cref{prop:jaf-cuts-2}.
	
	Let us first prove \Cref{prop:jaf-cuts-2}\ref{prop:ancestors-4}.
	Assume that $I$ and $J$ are joint-ancestor free,
	that is, $A_I(G {\setminus} J) \cap A_J(G {\setminus} I) = \emptyset$.
	Combining this assumption with
	\Cref{lem:jaf-cuts}\ref{prop:ancestors-1}
	shows that $(A_I(G {\setminus} J), A_J(G {\setminus} I))$
	is a cut.
	Assume for the sake of contradiction that
	the source $(\underline{I}, \underline{J})$
	of the cut $(A_I(G {\setminus} J), A_J(G {\setminus} I))$
	does not satisfy $\underline{I} \subseteq I$
	and $\underline{J} \subseteq J$.
	Specifically, suppose
	there exists $(k, \ell) \in E \cap (A_I(G {\setminus} J) \times A_J(G {\setminus} I))$
	such that $k \notin I$.
	Since $\ell \in A_J(G {\setminus} I)$,
	there exists a directed path
	$v_1, v_2, \ldots, v_n$, with $n \ge 1$,
	such that $v_1 = \ell$, $v_n = j \in J$,
	and $v_p \notin I$ for each $p \in \{1, 2, \ldots, n\}$.
	Since we assumed $k \notin I$,
	it follows that $k, v_1, v_2, \ldots, v_n$ is a directed path
	from $k$ to~$j$ in $G {\setminus} I$,
	hence $k \in A_J(G {\setminus} I)$,
	which is impossible because we assumed $k \in A_I(G {\setminus} J)$
	and $A_I(G {\setminus} J) \cap A_J(G {\setminus} I) = \emptyset$.
	
	Next, we prove \Cref{prop:jaf-cuts-2}\ref{prop:ancestors-5}.
	By \Cref{lem:jaf-cuts}\ref{prop:ancestors-2}, any cut $(A, B)$ with source $(I, J)$ is such that
	$A_I(G {\setminus} J) \subseteq A$
	and $A_J(G {\setminus} I) \subseteq B$.
	But we assumed $(A_I(G {\setminus} J), A_J(G {\setminus} I))$ forms a cut,
	and no nodes can be outside of the cut.
	Thus, $A_I(G {\setminus} J) = A$ and $A_J(G {\setminus} I) = B$.
	
	Lastly, we prove \Cref{prop:jaf-cuts-2}\ref{prop:ancestors-3}.
	Assume that $I$ and $J$ are not joint-ancestor free,
	i.e., $A_I(G {\setminus} J) \cap A_J(G {\setminus} I) \neq \emptyset$.
	Assume for the sake of contradiction that
	there is a cut $(A, B)$ with source $(I, J)$.
	\Cref{lem:jaf-cuts}\ref{prop:ancestors-2} implies that
	$A_I(G {\setminus} J) \subseteq A$
	and $A_J(G {\setminus} I) \subseteq B$,
	which in turn implies that
	$A \cap B \supseteq A_I(G {\setminus} J) \cap A_J(G {\setminus} I) \neq \emptyset$.
	This contradicts our assumption that $(A, B)$ is a cut.
\end{proof}

\subsection{S-product-form implies joint-ancestor freeness} \label{sec:s-product-form-3}

Our last step in the proof of \Cref{theo:s-product-form}
is to show the implication
\eqref{item:S-product-form-1} $\implies$ \eqref{item:S-product-form-3},
that is,
S-product-form relationship implies joint-ancestor freeness.
We have shown so far that statements
\eqref{item:S-product-form-2} and \eqref{item:S-product-form-3}
are equivalent to each other,
and that they imply \eqref{item:S-product-form-1}.
In other words,
we proved that if a\add{n} $i,j$-sourced cut exists, or equivalently if nodes~$i$ and~$j$ have no joint ancestor, then an S-product-form relationship between $i$ and $j$ exists. Specifically, a product-form relationship exists where $f_{i,j}$ depends only on transition rates along edges whose source is $i$, and $f_{j,i}$ depends only on transition rates along edges whose source is $j$.

\Cref{theo:no-product-form} below proves this condition is necessary.
The intuition behind the proof is as follows.
If an $i,j$-sourced cut does not exist, then by \Cref{prop:jaf-cuts-1} there exist nodes 
which are \add{joint} ancestors of \remove{both }$i$ and $j$, i.e., $A_i(G\setminus j) \cap A_j(G {\setminus} i)$ is nonempty. In particular, there must exist a node $k$ which is an ancestor of both nodes $i$ and $j$ via disjoint paths.
If such a node $k$ exists, we show that the ratio $\frac{\pi_i}{\pi_j}$ depends on edge weights $q_{k, k'}$ with source $k$.
This violates the definition of S-product-form given in \Cref{sec:product-form-def}, so \Cref{theo:no-product-form} shows that a joint ancestor implies no S-product-form, or equivalently that S-product-form implies no joint ancestor.

\begin{theorem}
	\label{theo:no-product-form}
	Consider a formal Markov chain $G = (V, E)$
	and let $i, j \in V$.
	Suppose nodes~$i$ and~$j$ are not joint-ancestor free,
	i.e., $A_i(G {\setminus} j) \cap A_j(G {\setminus} i) \neq \emptyset$.
	
	Then there exists a node $k \in A_i(G {\setminus} j) \cap A_j(G {\setminus} i)$ such that the stationary probability ratio $\frac{\pi_i}{\pi_j}$ depends on at least one of \add{the} edge weights emerging from $k$.
\end{theorem}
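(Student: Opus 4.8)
The plan is to use a fixed joint ancestor as a regeneration point and to write $\pi_i/\pi_j$ in terms of excursions away from it. First I would choose the node $k$ carefully. Among all nodes in $A_i(G {\setminus} j) \cap A_j(G {\setminus} i)$ (nonempty by hypothesis), together with a pair of witnessing paths --- one from $k$ to $i$ avoiding $j$, one from $k$ to $j$ avoiding $i$ --- I pick a node and a pair minimizing the total length of the two paths. I claim the resulting paths $P_i\colon k, a, \ldots, i$ and $P_j\colon k, b, \ldots, j$ are internally disjoint (they meet only at $k$): if they shared an internal node $w$, the two tails from $w$ would witness that $w$ is a joint ancestor with strictly smaller total length, contradicting minimality. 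In particular $a \neq b$, so the two witnessing paths leave $k$ through distinct out-edges $(k, a)$ and $(k, b)$, and the tail $a, \ldots, i$ avoids both $j$ and $k$, while $b, \ldots, j$ avoids both $i$ and $k$.

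Next I would invoke the regenerative (ratio-ergodic) characterization of the stationary distribution at $k$. Writing $N_x$ for the number of visits to state $x$ during one excursion from $k$ (from $k$ until the first return to $k$), one has $\pi_i/\pi_j = \mathbf{E}_k[N_i]/\mathbf{E}_k[N_j]$; every finite irreducible chain is positive recurrent, so this holds for all positive rates in the finite case (with an extra holding-time factor in the CTMC case that is independent of the out-rates of $k$). Decomposing an excursion by its first step gives $\mathbf{E}_k[N_i] = \sum_{k'} q_{k, k'} \, m^i_{k'}$, where $m^i_{k'} = \mathbf{E}_{k'}[\#\text{visits to } i \text{ before hitting } k]$; crucially each $m^i_{k'}$ is independent of the out-rates of $k$, since it concerns the chain only up to its first arrival at $k$. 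Hence
\begin{equation*}
	\frac{\pi_i}{\pi_j} = \frac{\sum_{k'} q_{k, k'} \, m^i_{k'}}{\sum_{k'} q_{k, k'} \, m^j_{k'}},
\end{equation*}
a ratio of two linear forms in the out-rates of $k$. Such a ratio is constant in those rates if and only if the coefficient vectors $(m^i_{k'})_{k'}$ and $(m^j_{k'})_{k'}$ are proportional, so it remains only to show they are not; this makes $\pi_i/\pi_j$ genuinely depend on some $q_{k, k'}$ and thus violates the form of \eqref{eq:first-level-product-form}, as required.

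For the non-proportionality I would exploit the two distinct, internally disjoint tails. Following the tail of $P_i$ from $a$ reaches $i$ before returning to $k$ with positive probability while staying off $j$, so $m^i_a > 0$; symmetrically $m^j_b > 0$. It then suffices to exhibit one instantiation of the remaining rates with $m^i_a \, m^j_b \neq m^i_b \, m^j_a$, and here disjointness pays off: biasing the rates to concentrate probability along the two disjoint tails makes the excursion from $a$ reach $i$ (and not $j$) before $k$, and the excursion from $b$ reach $j$ (and not $i$) before $k$, forcing $m^j_a$ and $m^i_b$ to be small relative to $m^i_a$ and $m^j_b$. I expect the main obstacle to be exactly this last step --- controlling what the chain does after it first reaches $i$ or $j$ but before it returns to $k$, since there need not be a path from $i$ back to $k$ avoiding $j$ (and vice versa). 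I would handle it by a limiting/taboo argument, sending the off-path rates to zero and tracking the limits of the rational functions $m^{\cdot}_{\cdot}$, using that non-proportionality is an open condition; equivalently, via the Markov-chain tree theorem, where the same dichotomy becomes the statement that the in-arborescence weights cannot satisfy $w^{k \to a}_i \, w^{k \to b}_j = w^{k \to b}_i \, w^{k \to a}_j$. Finally, for infinite graphs I would reduce to the finite case by a truncation argument on a strongly connected subgraph containing $i$, $j$, $k$, and the two witnessing paths.
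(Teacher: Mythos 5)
Your reduction — the minimal joint ancestor $k$ with internally disjoint witness paths, the regeneration identity $\pi_i/\pi_j = \mathbf{E}_k[N_i]/\mathbf{E}_k[N_j]$, the first-step decomposition into linear forms in the out-rates of $k$, and the equivalence of ``constant ratio'' with proportionality of $(m^i_{k'})$ and $(m^j_{k'})$ — is sound, and it parallels the paper's Steps~1--2 (the paper uses the embedded chain on the three states $\{i,j,k\}$ rather than excursion counts, but the role is the same). The genuine gap is the step you yourself flag as the ``main obstacle'': exhibiting one instantiation with $m^i_a m^j_b \neq m^i_b m^j_a$. The mechanism you propose for it is false in general. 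Writing $m^x_y = h^x_y R_x$ with $h^x_y = \mathbf{P}_y[\text{hit } x \text{ before } k]$ and $R_x = \mathbf{E}_x[\#\text{visits to } x \text{ before } k]$, the condition is exactly $h^i_a h^j_b \neq h^i_b h^j_a$. Now suppose every path from $i$ to $k$ passes through $j$ (strong connectivity only forbids this holding \emph{simultaneously} with the symmetric statement for $j$, so this case is realizable). Then the event ``hit $i$ before $k$'' is contained in ``hit $j$ before $k$'', so under your tail-biasing $m^j_a \ge h^i_a \approx 1$ and $h^j_a \ge h^i_a \approx 1$: neither $m^j_a$ nor, in general, $m^i_b$ becomes small, and disjointness of the tails does not ``pay off'' as claimed. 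Whether the cross-products differ then hinges on quantities like $\mathbf{P}_j[\text{hit } k \text{ before } i]$, i.e., on what the chain does \emph{after} reaching $j$ — which your biasing does not control, and which genuinely conflicts with it: a return path from $j$ to $k$ avoiding $i$ may share nodes with your tails, so biasing along it destroys $h^i_a \approx 1$, while sending off-path rates to zero drives $\mathbf{P}_j[\text{hit } k \text{ before } i]$ to zero at a competing rate (in concrete small examples the surviving difference is only of order $\epsilon$ and comes from a different comparison than the one you name). The deferred fixes (``limiting/taboo argument'', tree theorem, openness) are exactly the content of the theorem and are not carried out. The paper's proof resolves this with a device your proposal lacks: the three-state subchain identity \eqref{eq:subchain-7} expresses $\pi_i/\pi_j$ through $p_{i,j}, p_{i,k}, p_{j,i}, p_{j,k}$ and the single scalar $t = p_{k,i}/(p_{k,i}+p_{k,j})$; since the two compared instantiations differ only at out-edges of $k$, all of the uncontrollable post-$i$/post-$j$ behavior is \emph{identical} under both by construction and never needs to be estimated — only $t$, which the tail-biasing does control, must be shown to move.

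There is a second gap for infinite graphs. Truncation is not a valid reduction: proving the statement for the formal chain on a finite strongly connected subgraph says nothing about the formal chain on $G$, because the occupation quantities $m^{\cdot}_{\cdot}$ and hitting probabilities change when the rest of the graph is restored. Moreover, for infinite $G$ with arbitrary positive rates the chain need not be recurrent, so your regeneration identity is not even available until you construct instantiations that are positive recurrent and remain so after modifying the out-rates of $k$; the paper devotes its Steps~3--4 (an explicit shortest-path Lyapunov function and Foster--Lyapunov verifications for $q$, $\qa$, $\qb$) to precisely this. So the proposal is missing the decisive arguments in both the finite and the infinite case, not merely technical details.
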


\begin{proof}
	\add{We will demonstrate the existence
		of two instantiations of the vector~$q$,
		denoted by $\qa$ and $\qb$, such that
		(i) $\qa$ and $\qb$ differ only at edges
		emerging from a particular source node
		$k \in A_i(G {\setminus} j) \cap A_j(G {\setminus} i)$,
		and (ii) the stationary distributions
		$\pia$ and $\pib$
		associated with $\qa$ and $\qb$ satisfy
		\begin{align} \label{eq:inequality}
			\frac{\pia_i}{\pia_j} \neq \frac{\pib_i}{\pib_j}.
		\end{align}
		The proof is divided into 6 steps.
		In Step~\ref{step1}, we identify the vertex~$k \in A_i(G {\setminus} j) \cap A_j(G {\setminus} i)$ for which the result will eventually be proven.
		In Step~\ref{step2}, we derive a convenient expression for the ratios in~\eqref{eq:inequality}, in terms of a subchain of the original Markov chain restricted to nodes~$i$, $j$, and~$k$.
		As this expression is only valid for positive-recurrent Markov chains, in the remainder of the proof we will focus on instantiations of the vector~$q$ that are positive recurrent (in the sense that the corresponding Markov chain is positive recurrent).
		In Step~\ref{step3}, we build a particular vector~$q$,
		and we apply the Foster-Lyapunov theorem to show
		that~$q$ is positive recurrent.
		In Step~\ref{step4}, we construct~$\qa$ and~$\qb$ by altering~$q$,
		and we show that these vectors are positive recurrent,
		again using the Foster-Lyapunov theorem.
		In Steps~\ref{step5} and~\ref{step6},
		we finally prove that $\qa$ and~$\qb$
		satisfy~\eqref{eq:inequality},
		using the expression derived in Step~\ref{step2}.
		Throughout the proof, we see $q$, $\qa$, and $\qb$ as transition \emph{probability} vectors, i.e., we focus on DTMCs.}
	
	\newcounter{step}
	\setcounter{step}{0}
	
	\refstepcounter{step} 
	\paragraph*{\add{Step~\thestep: Specify the vertex~$k$}}
	\label{step1}
	
	We \remove{will} choose $k$ to be a node in $A_i(G {\setminus} j) \cap A_j(G {\setminus} i)$ such that $k$ is an ancestor of both nodes $i$ and $j$ via disjoint paths.
	To see why such a node must exist, consider an arbitrary node $k'$ in $A_i(G {\setminus} j) \cap A_j(G {\setminus} i)$. From any such node, there exist paths from $k'$ to $i$ and $k'$ to $j$.
	Consider the shortest such paths. Either these paths are disjoint,
	or there is a node $k''$ which is a joint ancestor of $i$ and $j$ via shorter paths than $k'$.
	Over all nodes in $A_i(G {\setminus} j) \cap A_j(G {\setminus} i)$,
	let $k$ be the node that is the ancestor of $i$ via the shortest path length, choosing arbitrarily in case of a tie.
	By the above argument, the shortest paths from $k$ to $i$ and from $k$ to $j$ must be disjoint.
	
	\refstepcounter{step} 
	\paragraph*{\add{Step~\thestep: Define the subchains restricted to nodes~$i$, $j$, and $k$}}
	\label{step2}
	
	\add{Next, for an arbitrary transition probability vector~$q$ giving rise to a positive-recurrent Markov chain,
		we examine the probability of transitioning between nodes $i, j,$ and~$k$. More specifically,}
	\replace{C}{c}onsider the subchain,
	with node set $U = \{i, j, k\}$,
	obtained by looking at the subsequence of states
	visited by the original Markov chain $G = (V, E)$
	inside the set~$U$.
	This subchain satisfies the Markov property,
	and for each $u, v \in U$\add{,}
	we let $p_{u, v}$ denote
	its transition probability
	from state~$u$ to state~$v$;
	these are functions of the original Markov chain's
	transition rates~$q_{i, j}$ for $i, j \in V$.
	Critically, letting $\pi_i$, $\pi_j$, and $\pi_k$
	denote the stationary probabilities for states $i$, $j,$ and $k$
	in the original Markov chain \add{under edge probabilities $q$},
	as defined by~\eqref{eq:balance-def},
	we can verify that
	\begin{align*}
		\frac{\pi_i}{\pi_i + \pi_j + \pi_k}, \quad
		\frac{\pi_j}{\pi_i + \pi_j + \pi_k}, \quad
		\text{and} \quad
		\frac{\pi_k}{\pi_i + \pi_j + \pi_k}
	\end{align*}
	form the stationary distribution of the \add{embedded DTMC corresponding to the }subchain.
	Using this definition,
	we will quantify the relative state probabilities $\pi_i$ and $\pi_j$ in terms of the subchain transition probabilities $p_{u, v}$ for $u,v \in U$.
	
	Using the balance equations for the \add{embedded DTMC corresponding to the} subchain, we obtain successively:
	\begin{align}
		\label{eq:subchain-1}
		\pi_k &= \replace{p_{ik}}{p_{i,k}} \pi_i + \replace{p_{jk}}{p_{j,k}} \pi_j + \replace{p_{kk}}{p_{k,k}} \pi_k, \\
		\label{eq:subchain-2}
		\pi_k &= \frac{\replace{p_{ik}}{p_{i,k}} \pi_i + \replace{p_{jk}}{p_{j,k}} \pi_j}{1-\replace{p_{kk}}{p_{k,k}}}, \\
		\label{eq:subchain-3}
		\pi_i &= \replace{p_{ii}}{p_{i,i}} \pi_i + \replace{p_{ji}}{p_{j,i}} \pi_j + \replace{p_{ki}}{p_{k,i}} \pi_k, \\
		\label{eq:subchain-4}
		\pi_i &= \replace{p_{ii}}{p_{i,i}} \pi_i + \replace{p_{ji}}{p_{j,i}} \pi_j + \replace{p_{ki}}{p_{k,i}} \frac{\replace{p_{ik}}{p_{i,k}} \pi_i + \replace{p_{jk}}{p_{j,k}} \pi_j}{1-\replace{p_{kk}}{p_{k,k}}}, \\
		\label{eq:subchain-5}
		\pi_i \left(1 - \replace{p_{ii}}{p_{i,i}} - \frac{\replace{p_{ik}}{p_{i,k}} \replace{p_{ki}}{p_{k,i}}}{1-\replace{p_{kk}}{p_{k,k}}} \right)
		&= \pi_j \left(\replace{p_{ji}}{p_{j,i}} + \frac{\replace{p_{jk}}{p_{j,k}} \replace{p_{ki}}{p_{k,i}}}{1-\replace{p_{kk}}{p_{k,k}}}\right), \\
		\label{eq:subchain-6}
		\pi_i \left(\replace{p_{ij}}{p_{i,j}} + \replace{p_{ik}}{p_{i,k}} - \frac{\replace{p_{ik}}{p_{i,k}} \replace{p_{ki}}{p_{k,i}}}{\replace{p_{ki}}{p_{k,i}} + \replace{p_{kj}}{p_{k,j}}} \right)
		&= \pi_j \left(\replace{p_{ji}}{p_{j,i}} + \frac{\replace{p_{jk}}{p_{j,k}} \replace{p_{ki}}{p_{k,i}}}{\replace{p_{ki}}{p_{k,i}} + \replace{p_{kj}}{p_{k,j}}}\right), \\
		\label{eq:subchain-7}
		\pi_i \left(\replace{p_{ij}}{p_{i,j}} + \replace{p_{ik}}{p_{i,k}} \frac{\replace{p_{kj}}{p_{k,j}}}{\replace{p_{ki}}{p_{k,i}} + \replace{p_{kj}}{p_{k,j}}} \right)
		&= \pi_j \left(\replace{p_{ji}}{p_{j,i}} + \replace{p_{jk}}{p_{j,k}} \frac{\replace{p_{ki}}{p_{k,i}}}{\replace{p_{ki}}{p_{k,i}} + \replace{p_{kj}}{p_{k,j}}}\right).
	\end{align}
	Equations~\eqref{eq:subchain-1} and~\eqref{eq:subchain-3}
	are the balance equations of the \add{embedded DTMC of the} subchain
	for states~$k$ and~$i$.
	Equation~\eqref{eq:subchain-2} follows
	by solving~\eqref{eq:subchain-1} with respect to~$\pi_k$,
	and once injected into~\eqref{eq:subchain-3}
	it yields~\eqref{eq:subchain-4}.
	Equation~\eqref{eq:subchain-5}
	follows by rearranging~\eqref{eq:subchain-4},
	and becomes~\eqref{eq:subchain-6} after injecting
	$\replace{p_{ii}}{p_{i,i}} + \replace{p_{ij}}{p_{i,j}} + \replace{p_{ik}}{p_{i,k}} = 1$.
	Equation~\eqref{eq:subchain-7} then follows
	by \replace{rearraging}{rearranging}~\eqref{eq:subchain-6}.
	
	\refstepcounter{step} 
	\paragraph*{\add{Step~\thestep: Construct a positive-recurrent vector~$q$}}
	\label{step3}
	
	\add{We now construct a transition probability vector~$q$ with support~$E$ that is positive-recurrent (in the sense that the Markov chain it gives rise to is positive recurrent). We can see~$q$ as a possible instantiation of the formal Markov chain's edge weights. This vector will be our starting point for constructing $\qa$ and $\qb$ in the next step and showing that $\qa$ and $\qb$ are also positive recurrent. The reason why we want $\qa$ and $\qb$ to be positive recurrent, is to guarantee that their associated subchain (see Step~\ref{step2}) is well-defined.}
	
	\add{We start by arbitrarily selecting a root node $r \in V$.
		Consider the function $h: \ell \in V \mapsto h(\ell) \in \{0, 1, 2, \ldots\}$ such that
		$h(\ell)$ is the directed-shortest-path distance from $\ell$ to the root node~$r$,
		for each $\ell \in V$.
		We will use~$h$ first to define the vector~$q$, and then as a Lyapunov function to prove positive recurrence of~$q$.}
	
	\add{For a given node of the graph $m$, we now define the transition probabilities $q_{m, n}$ for transitions leaving node $m$.
		Note that, for each $m \in V {\setminus} \{r\}$,
		there must be a node $m' \in N_m$ for which $h(m') = h(m) - 1$,
		where $N_m$ is the neighborhood of~$m$.
		We may select $m'$ to be the first step on a shortest path from $m$ to the root node~$r$.
		Let us label the nodes in $N_m {\setminus} \{m'\}$ as $m_1, m_2, \ldots$ in some arbitrary order.
		We assign the transition probabilities
		\begin{align*}
			q_{m, m_i} &= \frac{1}{2} \frac{1}{2^i} \frac{1}{h(m_i)+1}
			~\text{for each $i \in \{1, 2, \ldots, |N_m|\}$}, &
			q_{m, m'} &= 1 - \sum_{i=1}^{|N_m|} q_{m, m_i}.
		\end{align*}
		Note that $\sum_{i=1}^{|N_m|} q_{m, m_i} \le \frac{1}{2}$, as $h$ is non-negative, and that $|N_m| \in \{1, 2, \ldots\} \cup \{+\infty\}$.
		This procedure defines $q_{\ell, \ell'}$ for each $(\ell, \ell') \in E$ with $\ell \neq r$.
		To define $q_{r, \ell'}$ for each $(r, \ell') \in E$,
		we select some arbitrary $m'_0 \in N_r$
		and use the same procedure.}
	
	\add{With the transition probabilities $q$ defined in this way,
		we now prove that the Markov chain is positive recurrent,
		using $h$ as a Lyapunov function for the Foster-Lyapunov theorem \cite[Theorem~7.1.1]{B20}.
		We define the finite set $F = \{r\}$ consisting only of the root node, and we let $\delta = 1/2$.
		We will show that the following two conditions hold:
		\begin{align}
			\label{eq:negative-drift}
			\sum_{\ell' \in N_\ell} q_{\ell, \ell'} h(\ell')
			&\le h(\ell) - \delta,
			\quad \text{for each $\ell \in V {\setminus} F$}, \\
			\label{eq:finite-expectation}
			\sum_{\ell' \in N_\ell} q_{\ell, \ell'} h(\ell')
			&< \infty,
			\quad \text{for each $\ell \in F$}.
		\end{align}
		By the Foster-Lyapunov theorem, these conditions imply that $q$ is positive recurrent.}
	
	\add{First, let us prove the negative-drift equation~\eqref{eq:negative-drift} for each node
		in $V {\setminus} \{r\}$.
		We call this node $m$ to match the notation above:
		\begin{align*}
			\sum_{\ell' \in N_m} q_{m, \ell'} h(\ell')
			&\overset{\text{(i)}}= q_{m, m'} h(m')
			+ \sum_{i=1}^{|N_m|} q_{m, m_i} h(m_i), \\
			&\overset{\text{(ii)}}= q_{m, m'} (h(m) - 1)
			+ \sum_{i=1}^{|N_m|} \frac{1}{2} \frac{1}{2^i} \frac{1}{h(m_i)+1} h(m_i), \\
			&\overset{\text{(iii)}}\le q_{m, m'} (h(m) - 1)
			+ \sum_{i=1}^{|N_m|} \frac{1}{2} \frac{1}{2^i}, \\
			&\overset{\text{(iv)}}= q_{m, m'} (h(m) - 1) + \frac{1}{2}
			\overset{\text{(v)}}\le h(m) - 1 + \frac{1}{2} = h(m) - \frac{1}{2},
		\end{align*}
		where (i) follows by enumerating the out-neighbors of~$m$
		using the same notation as before,
		(ii) from the definition of the transition rates,
		(iii) because $h$ is positive,
		(iv) by upper-bounding the sum
		over $\{1, 2, \ldots, |N_m|\}$
		with the same sum over $\{1, 2, \ldots, +\infty\}$,
		and (v) from the fact that $q_{m, m'} \le 1$.
		We thus confirm \eqref{eq:negative-drift} for all nodes $m$ other than the root node $r$.}
	
	\add{For the root node $r$, a similar derivation
		confirms the finite-expectation equation~\eqref{eq:finite-expectation} by showing that
		\begin{align*}
			\sum_{\ell' \in N_r} q_{r, \ell'} h(\ell') \le h(m'_0) + \frac{1}{2}.
		\end{align*}%
	}
	
	\refstepcounter{step}
	\paragraph*{\add{Step~\thestep: Construct positive-recurrent vectors $\qa$ and $\qb$ by altering~$q$}}
	\label{step4}
	
	\add{We have shown that $q$ is positive recurrent, in the sense that the Markov chain defined by the graph~$G$ and the transition probability vector~$q$ is positive recurrent.
		We are now ready to define $\qa$ and $\qb$, two transition probability vectors that differ from~$q$ only along edges with source~$k$.
		We will show later in this step that $\qa$ and $\qb$
		are positive recurrent,
		and in Steps~\ref{step5} and~\ref{step6} we will show
		that their associated stationary distributions~$\pia$ and~$\pib$ satisfy~\eqref{eq:inequality}.}
	
	\replace{Let $a$ be the shortest path from $k$ to $i$, $a := k, a_2, a_3, \ldots, i$. Let $b$ be the shortest path from $k$ to $j$, $b := k, b_2, b_3, \ldots, j$.}{Let $a = k, a_2, a_3, \ldots, a_{|a|}, i$ be the shortest path from $k$ to $i$ and $b = k, b_2, b_3, \ldots, b_{|b|}, j$ be the shortest path from $k$ to $j$.}
	\add{Also let $V_a = \{k, a_2, a_3, \ldots, a_{|a|}\}$ (resp.\ $V_b = \{k, b_2, b_3, \ldots, b_{|b|}\}$) denote the set nodes that belong to path~$a$ (resp.\ $b$), except~$i$ (resp.\ $j$).}
	\add{Occasionally, we will use the notation
		$a_{|a|+1} = i$ and $b_{|b|+1} = j$.}
	\replace{As shown above,}{Recall from the definition of $k$ above that} $a$ and $b$ share no nodes except~$k$.
	
	\add{We now define transition probability vectors $\qa$ and $\qb$ that take the same value at all edges except $(k, a_2)$ and $(k, b_2)$.
		Let $\epsilon \in (0, 1)$ be a sufficiently small constant, smaller than a value to be precised later.}
	
	\add{First, for each edge $(\ell, \ell') \in E {\setminus} \{(k, a_2), (k, b_2)\}$, we define $\qa_{\ell, \ell'} = \qb_{\ell, \ell'}$ as follows:
		\begin{itemize}
			\item If $\ell \in V {\setminus} (V_a \cup V_b)$,
			i.e., either $\ell \in \{i, j\}$
			or $\ell$ is in neither $a$ nor $b$,
			we define
			\begin{align*}
				\qa_{\ell, \ell'} = \qb_{\ell, \ell'}
				\triangleq q_{\ell, \ell'}.        
			\end{align*}
			\item Next, we turn to the case where $\ell \in V_a {\setminus} \{k\}$, i.e., $\ell$ is within $a$ but is not $k$ or $i$, so that we have $\ell = a_p$ for some $p \in \{2, 3, \ldots, |a|\}$.    
			We let, for each $p \in \{2, 3, \ldots, |a|\}$,
			\begin{align*}
				\qa_{a_p, \ell'}
				= \qb_{a_p, \ell'}
				\triangleq \begin{cases}
					1 - \epsilon,
					&\text{if $\ell' = a_{p+1}$}, \\
					\displaystyle \epsilon \frac{q_{a_p,\ell'}}{\sum_{a'' \in N_{a_p} {\setminus} \{a_{p+1}\}} q_{a_p, a''}}
					&\text{if $\ell' \in N_{a_p} {\setminus} \{a_{p+1}\}$}.
				\end{cases}
			\end{align*}
			We define in the same manner weights of edges whose source node belongs to~$V_b {\setminus} \{k\}$.
			\item If $\ell = k$ and $\ell' \notin \{a_2, b_2\}$, we define
			\begin{align*}
				\qa_{k,\ell'} = \qb_{k,\ell'}
				&\triangleq \frac{\epsilon}{2} \frac{q_{k,\ell'}}{\sum_{\ell'' \in N_{k} {\setminus} \{a_2, b_2\}} q_{k, \ell''}}.
			\end{align*}
		\end{itemize}
		Finally, we define the two transition probabilities that differ between $\qa$ and $\qb$, namely, those corresponding to the edges $(k, a_2)$ and $(k, b_2)$:
		\begin{align*}
			\qa_{k,a_2} &\triangleq 1 - \epsilon, &
			\qa_{k,b_2} &\triangleq \frac{\epsilon}{2}, &
			\qb_{k,b_2} &\triangleq 1 - \epsilon, &
			\qb_{k,a_2} &\triangleq \frac{\epsilon}{2}.
		\end{align*}
		One can verify that $\sum_{\ell' \in V} q_{\ell, \ell'} = 1$
		for each $\ell \in V$.
		We assume that $\epsilon \in (0, 1)$ is small enough
		so that the transition probabilities in $\qa$ and $\qb$
		defined as multiple values of $\epsilon$
		are smaller than their $q$ counterparts;
		in other words, we assume that
		\begin{align*}
			\qa_{a_p, \ell'} &= \qb_{a_p, \ell'} \le q_{a_p, \ell'}
			\text{ for each $p \in \{2, 3, \ldots, |a|\}$ and $\ell' \in N_{a_p} \setminus \{a_{p+1}\}$}, \\
			\qa_{b_p, \ell'} &= \qb_{b_p, \ell'} \le q_{b_p, \ell'}
			\text{ for each $p \in \{2, 3, \ldots, |b|\}$
				and $\ell' \in N_{b_p} \setminus \{b_{p+1}\}$}, \\
			\qa_{k, \ell'} &= \qb_{k, \ell'} \le q_{k, \ell'}
			\text{ for each $\ell' \in N_k \setminus \{a_2, b_2\}$}, \\
			\qa_{k, b_2} &\le q_{k, b_2}, \quad \text{and} \quad
			\qb_{k, a_2} \le q_{k, a_2}.
		\end{align*}%
	}
	
	\add{Let us now prove that $\qa$ and $\qb$ are positive recurrent.
		Similarly to Step~\ref{step3},
		we prove variants of~\eqref{eq:negative-drift}
		and~\eqref{eq:finite-expectation}
		in order to apply the Forster-Lyapunov theorem.
		Compared to Step~\ref{step3},
		we use the same Lyapunov function $h$, namely, $h(\ell)$ is the directed-shortest-path distance from $\ell$ to the same root node~$r$, for each $\ell \in V$,
		and we replace the finite exclusion set~$F$
		with $F' = \{r\} \cup V_a \cup V_b$.
		We now prove that $\qa$ and $\qb$ satisfy these new variants of
		the negative-drift equation~\eqref{eq:negative-drift}
		and the finite-expectation equation~\eqref{eq:finite-expectation}:
		\begin{itemize}
			\item For the negative-drift equation~\eqref{eq:negative-drift},
			note that
			$q^a_{\ell,\ell'} = q^b_{\ell, \ell'} = q_{\ell, \ell'}$
			for all $\ell \in V {\setminus} F'$ and $\ell' \in N_\ell$,
			and we already proved that~$q$ satisfies~\eqref{eq:negative-drift}
			over $V {\setminus} F \supseteq V {\setminus} F'$.
			\item Let us now prove that $\qa$ satisfies
			the finite-expectation equation~\eqref{eq:finite-expectation};
			the proof for $\qb$ is symmetrical.
			Let $\ell \in F'$.
			Since the paths $a$ and $b$ intersect only in~$k$
			and each of them visits each node at most once
			(by definition of a path),
			at most one transition outside~$\ell$ has a larger probability
			in $\qa$ than in $q$,
			namely, the successor of~$\ell$ in $a$ or~$b$, if any;
			in the special case where $\ell = k$,
			$\ell$ has two successors in~$a$ and~$b$ ($a_2$ and $b_2$),
			but we chose $\epsilon$ such~ that $\qa_{k, b_2} \le q_{k, b_2}$.
			This observation implies that
			the finiteness of $\sum_{\ell' \in V} \qa_{\ell, \ell'} h(\ell')$
			follows from that of $\sum_{\ell' \in V} q_{\ell, \ell'} h(\ell')$,
			which follows from~\eqref{eq:negative-drift} and~\eqref{eq:finite-expectation}.
			For instance, if $\ell = a_p$
			for some $p \in \{2, 3, \ldots, |a| - 1\}$,
			we have $\qa_{a_p, \ell'} < q_{a_p, \ell'}$
			for each $\ell' \in N_{a_p} {\setminus} \{a_{p+1}\}$.
			Thus, recalling that $h$ is positive
			and $\qa_{a_p, a_{p+1}} \le 1$,
			we can bound the expected value of~$h$
			after one transition:
			\begin{align*}
				\sum_{\ell' \in N_{a_p}} \qa_{a_p,\ell'} h(\ell')
				\le h(a_{p+1}) + \sum_{\ell' \in N_{a_p}} q_{a_p, \ell'} h(\ell').
			\end{align*}
			The right-hand side is finite because~$q$
			satisfies~\eqref{eq:negative-drift} and~\eqref{eq:finite-expectation}.
			A similar argument confirms finite expectation for all nodes in $F'$ under each of $\qa$ and $\qb$.
		\end{itemize}
		This completes the proof of positive recurrence via the Foster-Lyapunov theorem.}
	
	\add{Let $\pia$ and $\pib$ denote the stationary distributions
		associated with $\qa$ and $\qb$, respectively.
		Because $\qa$ and $\qb$ are positive recurrent,
		we can use them to construct subchains restricted to nodes~$i$, $j$, and~$k$,
		as we did in Step~\ref{step2},
		and we denote their transition probabilities by $\pa$ and $\pb$, respectively.
		It follows from~\eqref{eq:subchain-7} in Step~\ref{step2} that
		\begin{align*}
			\frac{\pia_i}{\pia_j}
			&= \frac
			{\pa_{j, i} + \pa_{j, k} \frac{\pa_{k, i}}{\pa_{k, i} + \pa_{k, j}}}
			{\pa_{i, j} + \pa_{i, k} \left( 1 - \frac{\pa_{k, i}}{\pa_{k, i} + \pa_{k, j}} \right)}, &
			\frac{\pib_i}{\pib_j}
			&= \frac
			{\pb_{j, i} + \pb_{j, k} \frac{\pb_{k, i}}{\pb_{k, i} + \pb_{k, j}}}
			{\pb_{i, j} + \pb_{i, k} \left( 1 - \frac{\pb_{k, i}}{\pb_{k, i} + \pb_{k, j}} \right)}.
		\end{align*}
		Thus, $\pia$ and $\pib$ satisfy~\eqref{eq:inequality}
		if and only if $\pa$ and $\pb$ are such that
		\begin{align*}
			\frac
			{\pa_{j, i} + \pa_{j, k} \frac{\pa_{k, i}}{\pa_{k, i} + \pa_{k, j}}}
			{\pa_{i, j} + \pa_{i, k} \left( 1 - \frac{\pa_{k, i}}{\pa_{k, i} + \pa_{k, j}} \right)}
			&\neq \frac
			{\pb_{j, i} + \pb_{j, k} \frac{\pb_{k, i}}{\pb_{k, i} + \pb_{k, j}}}
			{\pb_{i, j} + \pb_{i, k} \left( 1 - \frac{\pb_{k, i}}{\pb_{k, i} + \pb_{k, j}} \right)}.
		\end{align*}
		In turn, to show this inequality, it is sufficient to prove that
		\begin{align}
			\label{eq:prop-pa-pb}
			\pa_{i, j} &= \pb_{i, j}, &
			\pa_{i, k} &= \pb_{i, k}, &
			\pa_{j, i} &= \pb_{j, i}, &
			\pa_{j, k} &= \pb_{j, k}, &
			\frac{\pa_{k, i}}{\pa_{k, i} + \pa_{k, j}}
			&\neq \frac{\pb_{k, i}}{\pb_{k, i} + \pb_{k, j}}.
		\end{align}
		The equalities in~\eqref{eq:prop-pa-pb} are shown in Step~\ref{step5},
		and the inequality is shown in Step~\ref{step6}.}
	
	\refstepcounter{step} 
	\paragraph*{\add{Step~\thestep: Verify that $\qa$ and $\qb$ imply the equalities in~\eqref{eq:prop-pa-pb}}}
	\label{step5}
	
	\add{Our goal at this step is to prove the equalities in~\eqref{eq:prop-pa-pb}.
		These equalities follow from the fact that
		$\qa_{\ell, \ell'} = \qb_{\ell, \ell'}$
		for each $(\ell, \ell') \in E$ with $\ell \neq k$.
		Because the argument is similar for all four equalities,
		we focus on the first.
		By definition, $\pa_{i, j}$ and $\pb_{i, j}$ are the sums of the probabilities of the trajectories in~$G$ that start in state~$i$, end in state~$j$, and never visit states $i$, $j$, or $k$ in-between; unlike our definition of a path, a trajectory may visit the same node multiple times and may therefore have an arbitrary (but finite) length.
		We let $\mathcal{F}_{i, j}$ denote this set of trajectories,
		which is entirely determined by~$G$.
		The probability of a trajectory in~$\mathcal{F}_{i, j}$ is
		the product of the transition probabilities
		along the edges that compose the trajectory.
		By definition, none of the trajectories in $\mathcal{F}_{i, j}$
		contains any transitions leaving state~$k$.
		It follows that $\qa_{\ell, \ell'} = \qb_{\ell, \ell'}$
		for any edge $(\ell, \ell')$ that appears
		in the trajectories in~$\mathcal{F}_{i, j}$,
		and then that $\pa_{ij} = \pb_{ij}$.
		A similar argument proves the other equalities in~\eqref{eq:prop-pa-pb}.}
	
	\refstepcounter{step} 
	\paragraph*{\add{Step~\thestep: Verify that $q^a$ and $q^b$ imply the inequality in~\eqref{eq:prop-pa-pb}}}
	\label{step6}
	
	\add{Our goal at this step is to prove the inequality in~\eqref{eq:prop-pa-pb}.
		We will prove this inequality by lower-bounding the left-hand side
		and upper-bounding the right-hand side.
		First focusing on $\qa$, we have
		\begin{align*}
			\pa_{k, i}
			&\overset{\text{(i)}}{\ge} \qa_{k, a_2} \qa_{a_2, a_3} \cdots \qa_{a_{|a|}, i}
			\overset{\text{(ii)}}{=} (1 - \epsilon)^{|a|},
		\end{align*}
		where (i) follows by observing that path~$a$ is a trajectory
		that starts in state~$k$, ends in state~$i$,
		and never visits states $i$, $j$, or $k$ in-between
		(i.e., $a \in \mathcal{F}_{k, i}$ with the notation of Step~6),
		and (ii) follows by definition of~$\qa$.
		By applying a similar reasoning for path~$b$
		under the transition probability vector~$\qb$,
		we conclude that $\pb_{k, j} \ge (1 - \epsilon)^{|b|}$.}
	
	\add{Let us now assume~$\epsilon \le \frac1{3 \max(|a|, |b|)}$.
		One can verify that, for each $x \ge 1$, we have
		$(1 - \frac1{3x})^x \in [\frac23, e^{-\frac13})$.
		It follows that $\pa_{k, i} \ge \frac23$
		and $\pb_{k, j} \ge \frac23$.
		We conclude that
		\begin{align*}
			\frac{\pa_{k, i}}{\pa_{k, i} + \pa_{k, j}}
			&\overset{\text{(i)}}{\ge} \pa_{k, i}
			\overset{\text{(ii)}}{\ge} \frac23, &
			\frac{\pb_{k, i}}{\pb_{k, i} + \pb_{k, j}}
			&\overset{\text{(iii)}}{\le} \frac{\pb_{k, i}}{\frac23}
			\overset{\text{(iv)}}{\le} \frac12,
		\end{align*}
		where (i) follows from the fact that
		$\pa_{k, i} + \pa_{k, j} \le 1$,
		(ii) from the fact that $\pa_{k, i} \ge \frac23$,
		(iii) from the fact that
		$\pb_{k, i} + \pb_{k, j} \ge \pb_{k, j} \ge \frac23$,
		and (iv) from the fact that $\pb_{k, i} \le 1 - \pb_{k, j} \le \frac13$.
		These bounds imply the inequality in~\eqref{eq:prop-pa-pb}, completing the proof.
	}
\end{proof}

\section{Cut graph and higher-level cuts} \label{sec:higher-levels}

In this section, we explore product-form relationships \add{that go} beyond the S-product-form
\replace{which was the focus of}{considered in} \Cref{sec:s-product-form}.
In \Cref{sec:cut-graph}, we define the cut graph, which is an undirected graph whose edges represent S-product-form relationships between nodes.
Based on this definition, in \Cref{sec:ps-product-form}, we explore PS-product-form relationships, corresponding to combinations of S-product-form relationships produced by paths in the cut graph.
In \Cref{sec:second-level,sec:second-level-product}, we \add{define SPS-product-form relationships and beyond, and we} introduce and explore \add{the corresponding }higher-level cuts\remove{, which correspond to SPS-product-form relationships and beyond}.

\subsection{Cut graph}
\label{sec:cut-graph}

Let us first introduce the cut graph of a formal Markov chain.

\begin{definition}[Cut graph] \label{def:cut-graph}
	Consider a \replace{strongly-connected directed graph}{formal Markov chain} $G = (V, E)$.
	The \emph{cut graph} of~$G$
	is the undirected graph $C_1(G) = (V, R)$
	where $R$ is the family of doubletons $\{i, j\} \subseteq V$
	such that \add{states}~$i$ and $j$ are in an S-product-form relationship.
	In other words, $C_1(G)$ is the graph of
	the S-product-form binary relation.
\end{definition}

Recall that, by \Cref{theo:s-product-form},
two nodes~$i$ and~$j$ are in an S-product-form relationship
if and only if there exists an $i, j$-sourced cut,
that is, $i$ and~$j$ are joint-ancestor free.
Using this observation,
\Cref{algo:cut-graph} returns the cut graph
of a formal Markov chain with a finite number of states.
It uses the \textsc{Mutually\-Avoiding\-Ancestors} procedure
from \Cref{algo:jaf}.
Because each call to this procedure takes time $O(|E|)$,
the \textsc{CutGraph} procedure runs in time $O(|V|^2 |E|)$.
If the cut graph is connected,
then the stationary distribution
can be entirely computed by applying S-product-form relationships.
Lastly, observe that the S-product-form relationship is not transitive,
i.e., if the pairs $i, j$ and $j, k$ of nodes are both \replace{o}{i}n an S-product-form relationship,
this does not imply that nodes $i, k$ are.
Instead, we show in \Cref{sec:ps-product-form} that nodes $i, k$ are then in a PS-product-form relationship.

\begin{algorithm}[htb]
	\caption{Returns the cut graph $C_1(G)$ of a finite directed graph~$G = (V, E)$}
	\label{algo:cut-graph}
	\begin{algorithmic}[1]
		\Procedure{CutGraph}{finite directed graph $G = (V, E)$}
		$\to$ the cut graph~$C_1(G)$
		\State $E' \gets \emptyset$
		\For{each pair of distinct nodes $i, j \in V$}
		\State $A_i, A_j \gets \Call{MutuallyAvoidingAncestors}{G, i, j}$
		\If{$A_i \cap A_j = \emptyset$}
		\State add edge $\{i, j\}$ to $E'$
		\EndIf
		\EndFor
		\State \Return $(V, E')$
		\EndProcedure
	\end{algorithmic}
\end{algorithm}

\replace{I}{To help build more intuition on \replace{graph-based product-form}{graph-based product form}, i}n Appendix~\ref{app:clique},
we relate the structure of a formal Markov chain~$G$
to the existence of a clique in its cut graph~$C_1(G)$.
This condition can be seen as an extension of \Cref{prop:jaf-cuts-1},
as an edge is a clique of size~2.
Appendix~\ref{app:clique} shows in particular that
the one-way cycle of \Cref{ex:one-way-cycle}
is the only finite formal Markov chain whose cut graph is the complete graph.

\subsection{PS-product-form relationships}
\label{sec:ps-product-form}

The next lemma shows that, if two nodes are connected in the cut graph $C_1(G)$, they are in a PS-product-form relationship.
In this way, each connected component of the cut graph forms a set of nodes that are pairwise in PS-product-form relationships. An example \replace{of}{with} a fully-connected cut graph appears in \Cref{sec:msj-example}.

\begin{lemma}
	\label{lem:cut-graph-ps}
	Consider a formal Markov chain~$G = (V, E)$
	and two nodes $i, j \in V$.
	\add{Assume that $i$ and $j$ belong to the same connected component of $C_1(G)$.}
	\replace{Let $i = k_1, k_2, \ldots, k_d, k_{d+1} = j$ denote a path of length~$d$
		between nodes~$i$ and~$j$ in the cut graph $C_1(G)$,
		where $d$ is the distance between nodes~$i$ and~$j$ in $C_1(G)$.}{Let $d$ denote the distance between $i$ and $j$ in $C_1(G)$ and $i = k_1, k_2, \ldots, k_d, k_{d+1} = j$ a path of length~$d$ in $C_1(G)$.}
	Then
	\begin{align} \label{eq:cut-graph-product-form}
		\pi_i \prod_{p = 1}^d f_{k_p, k_{p+1}}
		&= \pi_j \prod_{p = 1}^d f_{k_{p+1}, k_p}.
	\end{align}
	where the $f$'s are given by Equation~\eqref{eq:first-level-product-form} in \Cref{theo:s-product-form}. \replace{In this case}{Hence}, nodes~$i$ and $j$ are in a PS-product-form relationship.
\end{lemma}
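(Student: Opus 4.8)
The plan is to obtain the product relationship by chaining together the $d$ individual S-product-form relationships associated with the edges of the path $i = k_1, k_2, \ldots, k_{d+1} = j$ in the cut graph. Since $\{k_p, k_{p+1}\}$ is an edge of $C_1(G)$ for each $p \in \{1, 2, \ldots, d\}$, nodes $k_p$ and $k_{p+1}$ are in an S-product-form relationship by \Cref{def:cut-graph}, so \Cref{theo:s-product-form} gives
\begin{align*}
	\pi_{k_p} f_{k_p, k_{p+1}} = \pi_{k_{p+1}} f_{k_{p+1}, k_p}, \qquad p \in \{1, 2, \ldots, d\},
\end{align*}
with the factors $f$ given explicitly by \eqref{eq:first-level-product-form}.

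First I would multiply these $d$ equalities together. The left-hand side becomes $\left(\prod_{p=1}^d \pi_{k_p}\right)\left(\prod_{p=1}^d f_{k_p, k_{p+1}}\right)$ and the right-hand side becomes $\left(\prod_{p=1}^d \pi_{k_{p+1}}\right)\left(\prod_{p=1}^d f_{k_{p+1}, k_p}\right)$. The two stationary-probability products share the common factor $\pi_{k_2} \pi_{k_3} \cdots \pi_{k_d}$, which telescopes away upon cancellation, leaving $\pi_{k_1} = \pi_i$ on the left and $\pi_{k_{d+1}} = \pi_j$ on the right; this is precisely \eqref{eq:cut-graph-product-form}. (When $i = j$ the path has length $d = 0$, the products are empty, and the identity is the trivial $\pi_i = \pi_i$.)

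The remaining step is to confirm that this identity is a genuine PS-product-form relationship in the sense of \Cref{sec:product-form-def}. Because $i = k_1, \ldots, k_{d+1} = j$ is a path, its nodes are distinct, so $F_{i,j} = \{k_1, \ldots, k_d\}$ and $F_{j,i} = \{k_2, \ldots, k_{d+1}\}$ are honest subsets of $V$; and by \eqref{eq:first-level-product-form} each factor $f_{k_p, k_{p+1}}$ is a sum of transition rates $q_{k_p, k}$ emerging from the single node $k_p$, i.e.\ a sum over some $S_{k_p, i, j} \subseteq N_{k_p}$. Thus $\prod_{p=1}^d f_{k_p, k_{p+1}}$ has exactly the product-of-sums shape required, and likewise for $\prod_{p=1}^d f_{k_{p+1}, k_p}$.

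The one point that needs care --- and the only place the argument is not purely formal --- is the cancellation of the intermediate stationary probabilities. I would justify it by noting that, for a strongly connected chain, each $\pi_\ell$ is a nonzero rational function of the transition rates (it is strictly positive for any positive-recurrent instantiation), so dividing by $\pi_{k_2} \cdots \pi_{k_d}$ is legitimate in the field of rational functions. An equivalent, division-free route is to rewrite each relation as the ratio $\pi_{k_p}/\pi_{k_{p+1}} = f_{k_{p+1}, k_p}/f_{k_p, k_{p+1}}$ and telescope the product of ratios directly; here one additionally uses that each $f$ is a \emph{nonempty} sum of positive rates, since every $i,j$-sourced cut has at least one crossing edge out of $i$ (the source set $\{i\}$ is nonempty by \Cref{def:source}), hence each $f$ is nonzero. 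I expect this bookkeeping to be the main --- though still minor --- obstacle, the telescoping itself being routine.
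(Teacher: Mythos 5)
Your proposal is correct and follows essentially the same argument as the paper, which simply notes that the S-product-form relationships along the path (each guaranteed by \Cref{theo:s-product-form} via \Cref{lem:cut-implies-product-form}) chain together to give the PS-product-form relationship. Your additional bookkeeping --- telescoping the intermediate $\pi_{k_p}$'s using their strict positivity and checking that the resulting factors have the required product-of-sums shape over the distinct path nodes --- just makes explicit the details the paper leaves implicit.
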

\Cref{lem:cut-graph-ps} follows from the fact that each edge in the cut graph represents an S-product-form relationship, as proven in \Cref{lem:cut-implies-product-form},
which chains together along the path to give a PS-product-form relationship.
We can be more specific than simply saying that $i$ and $j$ are in a PS-product-form
relationship.
The arithmetic circuit \add{(see \cref{def:arithmetic-circuit})}
associated with the left-hand side of~\eqref{eq:cut-graph-product-form}
has depth~2 and size \add{(total number of gates)}
\begin{align*}
	1 + d + \sum_{p = 1}^d \left| E \cap \left( \{k_p\} \times A_{k_{p+1}}(G {\setminus} k_p) \right) \right|.
\end{align*}
In particular, if the out-degree of each node on the path is upper-bounded by~$D$,
the size of the arithmetic circuit is upper-bounded by $1 + d + dD$.

\subsection{Higher-level \add{product-form relationships and} cuts}
\label{sec:second-level}

\add{We give a general definition of higher-level product-form relationships and cuts, with a special focus on the second level.}

\paragraph*{\add{Higher-level product-form relationships}}

\add{So far, we have studied S-type and PS-type product-form relationships, as defined in \cref{sec:product-form-def}. We refer to these as ``first-level'' product-form relationships. We now turn to studying higher-level product-form relationships, which we define as follows:
	\begin{description}
		\item[SPS-product-form:] We add another layer of alternation to PS-product-form. Each sum is over neighboring vertices, while the products are over arbitrary vertices.
		We also allow the terms in the products to be the \emph{inverses} of sums, as well as direct sums. 
		Focusing on $f_{i, j}$,
		we say that nodes~$i$ and~$j$ are in an SPS-product-form relationship if
		there exist $S_{i,j} \subseteq N_i$, $F_{k,i,j} \subseteq (V \times \{-1, 1\})$ for each $k \in S_{i,j}$,
		and $S_{a,k,i,j} \subseteq N_a$ for each $(a, p) \in F_{k,i,j}$, such that
		\begin{align*}
			f_{i, j} = \sum_{k \in S_{i,j}} \prod_{(a, p) \in F_{k,i,j}} \Big(\sum_{k' \in S_{a,k,i,j}} q_{a,k'}\Big)^p.
		\end{align*}
		\item[Higher-order:] We can similarly define PSPS-product-form, SPSPS-product-form and more generally (PS)$^n$ and S(PS)$^{n}$ product form for any $n \in \{1, 2, 3, \ldots\}$.
\end{description}}

\paragraph*{\add{Higher-level cuts}}

\add{Corresponding to these higher-level product-form relationships, we also study higher-level cuts.}
Up to this point, we have focused on cuts with a single source vertex on each of side of the cut: $i, j$-sourced cuts, where $i$ and $j$ are each single vertices.
We refer to such cuts as ``first-level'' cuts. These cuts correspond to pairs of states which are joint-ancestor free, and result in S-type \replace{product form}{product-form} relationships between these states.
In addition to these first-level cuts, we are also interested in \emph{second-level} cuts.
We now define such second-level cuts with reference to the cut graph $C_1(G)$ introduced in \Cref{sec:cut-graph}.
In \Cref{lem:second-level-sps} we will prove that
second-level cuts give rise to SPS-product-form relationships.
\Cref{tab:cut-graph-vs-product-form} summarizes this section
by showing what levels of cuts give rise to the \replace{graph-structure product-form}{graph-based product-form} relationships enumerated in \Cref{sec:product-form-def}.

\begin{table}[htb]
	\scalebox{0.92}{%
		\begin{tabular}{|c|c|c|}
			\hline
			Nodes~$i$ and~$j$ are neighbors in $C_1(G)$ & S-product-form & \Cref{theo:s-product-form} \\ \hline
			Nodes~$i$ and~$j$ are connected by a path in $C_1(G)$ & PS-product-form & \Cref{lem:cut-graph-ps} \\ \hline
			Nodes~$i$ and~$j$ are connected by a hyperpath in $C_2(G)$ containing & \multirow{2}{*}{SPS-product-form} & \multirow{2}{*}{\Cref{lem:second-level-sps}} \\
			at most one hyperedge made of more than two nodes & & \\ \hline
			Nodes~$i$ and~$j$ are connected by a hyperpath in $C_2(G)$ & PSPS-product-form & \\ \hline
		\end{tabular}
	}
	\caption{Sufficient conditions under which two distinct states~$i$ and~$j$ of a formal Markov chain $G = (V, E)$ are in a product-form relationship. A hyperpath is defined as a sequence of distinct nodes such that each pair of consecutive nodes in the path is connected by a hyperedge \add{(sets of 2 or more nodes)}.}
	\label{tab:cut-graph-vs-product-form}
\end{table}

\remove{\paragraph*{Second-level cuts}}

We define two kinds of second-level cuts, closely related but subtly different.
\begin{definition}[Second-level cuts]
	Consider a formal Markov chain $G = (V, E)$. \\
	A \textit{broad second-level cut} is a cut with source $(I, J)$ such that the nodes in $I$ are connected to one another via the cut graph $C_1(G)$, and the same is true of $J$. 
	In other words, an \replace{$(I, J)$}{$I, J$}-sourced cut is a broad second-level cut if there exists a pair \replace{$K_1$ and $K_2$}{$(K_1, K_2)$} of connected components of the cut graph $C_1(G)$
	such that $I \subseteq K_1$ and $J \subseteq K_2$. \\
	A \textit{narrow second-level cut} is a cut arising from a joint-ancestor free relationship between two connected components of the cut graph $C_1(G)$.
	Specifically, the narrow second-level cut arising from \replace{two connected component~$K_1$ and $K_2$}{a pair $(K_1, K_2)$ of connected components} of $C_1(G)$ is the cut $(A_{K_1}(G {\setminus} K_2), A_{K_2}(G {\setminus} K_1))$.
\end{definition}

Note that a narrow second-level cut is also a broad second-level cut: its sources must be subsets of $K_1$ and $K_2$, by \Cref{prop:jaf-cuts-2}. The reverse is not as clear\add{: the broad second-level cut with source $(I, J)$, if it exists, is given by $(A_I(G {\setminus} J), A_J(G {\setminus} I))$, and \textit{a priori} we may not have $A_I(G {\setminus} J) = A_{K_1}(G {\setminus} K_2)$ and $A_J(G {\setminus} I) = A_{K_2}(G {\setminus} K_1)$}. Nonetheless, we conjecture that whenever a broad second-level cut exists, a corresponding narrow second-level cut also exists; see \Cref{conj:second-level} later in this section for details.

If a broad second-level cut exists in the graph, generated by $I \subseteq K_1$ and $J \subseteq K_2$, we will show in \Cref{lem:second-level-sps} that this cut gives rise to an \replace{SPS-product form}{SPS-product-form} relationship between any pair of vertices $i \in K_1, j \in K_2$. It follows that, if all connected components of the cut graph are connected via broad second-level cuts, then $G$ exhibits a \replace{PSPS-product form}{PSPS-product-form}; an example appears in \Cref{sec:batch1}. Our primary motivation for introducing narrow second-level cuts is that they can be algorithmically discovered more easily than broad second-level cuts; see the following sub-subsection for details.

One can similarly define broad third-level cuts, fourth-level cuts, and so on, which give rise to S(PS)$^n$ and (PS)$^n$ product-form relationships for larger $n$.
\add{An example of formal Markov chain with cuts of arbitrary levels will appear in \Cref{sec:batch-2}.}
A broad third-level cut is a cut \replace{whose}{such that all nodes within each of the} source sets $I$ and $J$ are connected by a combination of first-level cuts and broad second-level cuts, and so forth.
One can also define narrow third-level cuts with reference to narrow second-level cuts, and so forth.
Intuitively, each additional sum (S) appears by applying a cut equation, and each additional product (P) appears by combining combining several product-form relationships.

\add{The second-level equivalent of the first-level cut graph $C_1(G)$
	is the narrow second-level cut \emph{hypergraph} $C_2(G)$.
	A hypergraph can contain \emph{hyperedges}, which are sets containing 2 or more nodes, expanding the standard notion of 2-node edges.}
We define the narrow second-level cut graph $C_2(G)$ as follows. Starting with the first-level cut graph $C_1(G)$, for each pair of connected components $K_1, K_2$ which form a narrow second-level cut, we add a hyperedge containing the sources of the cut $(A_{K_1}(G {\setminus} K_2), A_{K_2}(G {\setminus} K_1))$.
Assuming \Cref{conj:second-level}, which claims that broad and narrow second-level cuts are equivalent, $C_2(G)$ contains all necessary information to identify all first-level and second-level cuts, and we can characterize the corresponding S, PS, SPS, and PSPS product-form relationships. We can similarly define narrow third-level and higher-level cut graphs. If \Cref{conj:second-level} fails, then we can define a distinct broad second-level cut graph, and higher-level broad cut graphs.

\paragraph*{SPS-product-form}
\label{sec:second-level-product}

We show that if there exists a broad second-level cut with source $(I, J)$, then not only are every pair of vertices in $I$ and $J$ in an SPS-product-form relationship\remove{ (\Cref{def:graph-based-product-form})}, but in fact every pair of vertices in $K_1$ and $K_2$ are in an SPS-product-form relationship, where $K_1$ and $K_2$ are the connected components of the cut graph $C_1(G)$ that include $I$ and $J$, respectively.

\begin{lemma}\label{lem:second-level-sps}
	Consider a formal Markov chain $G = (V, E)$ \add{and two disjoint non-empty sets $I, J \subsetneq V$}.
	\replace{Given}{If there exists} an \replace{$(I, J)$}{$I, J$}-sourced broad second level cut, with $I \subseteq K_1$ and $J \subseteq K_2$, and $K_1, K_2$ connected components of $C_1(G)$,
	then every pair of vertices $i \in K_1$ and $j \in K_2$ are in an SPS-product-form relationship.
\end{lemma}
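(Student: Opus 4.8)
The plan is to combine the single cut equation coming from the broad second-level cut (\Cref{lem:cut-equations}) with the first-level PS-product-form relationships (\Cref{lem:cut-graph-ps}) that hold throughout each connected component $K_1$ and $K_2$. First I would apply \Cref{lem:cut-equations} to the $I,J$-sourced cut $(A,B)$. Because its source is exactly $(I,J)$ (\Cref{def:source}), every edge crossing from $A$ to $B$ emanates from a node of $I$ and every edge crossing from $B$ to $A$ emanates from a node of $J$, so the cut equation collapses to
\begin{align*}
	\sum_{u \in I} \pi_u\, g_u = \sum_{v \in J} \pi_v\, h_v,
	\qquad
	g_u = \sum_{\substack{(u, w) \in E: \\ w \in B}} q_{u, w},
	\qquad
	h_v = \sum_{\substack{(v, w) \in E: \\ w \in A}} q_{v, w}.
\end{align*}
By the definition of source, each $g_u$ (for $u \in I$) and each $h_v$ (for $v \in J$) is a nonempty sum of transition rates leaving a single node, i.e., an S-type factor.

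Second, I would eliminate the auxiliary probabilities $\pi_u$, $u \in I$, and $\pi_v$, $v \in J$, in favor of the two reference probabilities $\pi_i$ and $\pi_j$. Since $I \subseteq K_1$, $i \in K_1$, and $K_1$ is a connected component of $C_1(G)$, \Cref{lem:cut-graph-ps} applies along a path in $C_1(G)$ from $i$ to $u$ and gives $\pi_u = \pi_i\, F_{i, u} / F_{u, i}$, where $F_{i, u}$ and $F_{u, i}$ denote the two products of first-level factors \eqref{eq:first-level-product-form} that appear on the two sides of \eqref{eq:cut-graph-product-form}. The analogous relation $\pi_v = \pi_j\, F_{j, v} / F_{v, j}$ holds for each $v \in J \subseteq K_2$. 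Substituting these into the cut equation yields the product-form relationship $\pi_i f_{i, j} = \pi_j f_{j, i}$ with
\begin{align*}
	f_{i, j} = \sum_{u \in I} \frac{F_{i, u}}{F_{u, i}}\, g_u,
	\qquad
	f_{j, i} = \sum_{v \in J} \frac{F_{j, v}}{F_{v, j}}\, h_v.
\end{align*}
This step also delivers the claimed generality: using $\pi_i$ and $\pi_j$ as references already covers \emph{every} pair $i \in K_1$, $j \in K_2$, not just the source pair, since \Cref{lem:cut-graph-ps} connects any such $i$ (resp.\ $j$) to all of $I$ (resp.\ $J$).

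Finally, I would verify that $f_{i, j}$ (and symmetrically $f_{j, i}$) lies in the SPS class. Each summand is the product of the S-factor $g_u$ with the ratio $F_{i, u} / F_{u, i}$; since $F_{i, u}$ and $F_{u, i}$ are themselves products of S-factors (sums of rates leaving a single node), this ratio is a product of such sums raised to powers in $\{-1, 1\}$. Thus every summand is a product of (inverse-)sums, i.e., a PS-type term, and $f_{i, j}$ is a single outer sum of these, matching the depth-3 ``sum of products of sums'' structure of SPS-product-form. It is worth highlighting that the inverses permitted in the SPS definition are precisely what accommodate the denominators $F_{u, i}$ produced by the first-level relationships.

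I expect the main obstacle to be this last bookkeeping step, namely reconciling the derived factors with the literal index-set constraints in the definition of SPS-product-form: the outer sum here ranges over the source set $I$ rather than over the out-neighborhood $N_i$, so SPS should be read as the depth-3 arithmetic-circuit class of \Cref{def:arithmetic-circuit} rather than via the verbatim neighbor indexing. Everything else is routine once the cut equation and the PS-relationships are in hand; the one point requiring care is ensuring the reference substitutions are well defined, which holds because each first-level factor is a nonempty sum of positive rates and is therefore nonzero as a rational function.
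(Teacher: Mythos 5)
Your proposal is correct and takes essentially the same route as the paper's proof: apply the cut equation of \Cref{lem:cut-equations} to the broad second-level cut, use the PS-product-form relationships of \Cref{lem:cut-graph-ps} to replace every source-node probability $\pi_u$, $u \in I$, and $\pi_v$, $v \in J$, by expressions in the reference probabilities $\pi_i$ and $\pi_j$, and then observe that the collected factors are sums of products of (inverse) sums, i.e., SPS-type in the depth-3 circuit sense. If anything, your handling of the crossing edges is slightly more careful than the paper's: the paper asserts that all crossing edges lie in $(I \times J) \cup (J \times I)$ — a claim that is unnecessary and actually fails in the paper's own examples (e.g., cut~2 in \Cref{fig:jaf-cuts}, whose crossing edges from $I = \{1,4\}$ terminate at node~$5 \notin J$) — whereas you correctly sum over all edges leaving $I$ (resp.\ $J$) across the cut, regardless of their destination.
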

\begin{proof}
	First, recall from \Cref{lem:cut-graph-ps} that because $K_1$ is a connected component of $C_1(G)$, every pair of vertices $i, i' \in K_1$ is in a PS relationship. Letting $i = k_1, k_2, \ldots k_d, k_{d+1} = i'$ be a path connecting $i$ and $i'$ in $K_1$, a PS-product-form relationship is given by:
	\begin{align}\label{eq:ps-product-form}
		f_{i,i'} &= \prod_{p=1}^d f_{k_p, k_{p+1}}, \quad
		f_{i',i} = \prod_{p=1}^d f_{k_{p+1}, k_p}, \\
		\pi_{i} f_{i, i'} &= \pi_{i'} f_{i', i}.
	\end{align}
	A similar PS-product-form relationship holds for any pair $j, j' \in K_2$.
	
	Next, let us apply \Cref{lem:cut-equations} to the cut with source sets
	$(I, J)$.
	By \Cref{lem:cut-equations}, we have
	\begin{align} \label{eq:source-cut-eqns}
		\sum_{(i,j) \in E \cap (I \times J)} \pi_i q_{i, j}
		= \sum_{(j,i) \in E \cap (J \times I)} \pi_j q_{j, i}.
	\end{align}
	Note that all edges that cross the cut belong to either $I \times J$ or $J \times I$.
	
	Let $i_*$ and $j_*$ be an arbitrary pair of vertices, $i_* \in K_1$
	and $j_* \in K_2$. We will now explicitate the \replace{SPS-product form}{SPS-product-form} relationship between $i_*$ and $j_*$.
	Applying the \replace{PS-product form}{PS-product-form} relationships within $K_1$ and $K_2$,
	given by \eqref{eq:ps-product-form},
	we can rewrite \eqref{eq:source-cut-eqns} in terms of $\pi_{i_*}$
	and $\pi_{j_*}$:
	\begin{align}
		\nonumber
		\sum_{(i,j) \in E \cap (I \times J)} \pi_{i_*} \frac{f_{i_*, i}}{f_{i, i_*}} q_{i, j}
		&= \sum_{(j,i) \in E \cap (J \times I)} \pi_{j_*} \frac{f_{j_*, j}}{f_{j, j_*}} q_{j, i}, \\
		\label{eq:sps-product-form}
		\pi_{i_*} \sum_{(i,j) \in E \cap (I \times J)} \frac{f_{i_*, i}}{f_{i, i_*}} q_{i, j} &= 
		\pi_{j_*} \sum_{(j,i) \in E \cap (J \times I)} \frac{f_{j_*, j}}{f_{j, j_*}} q_{j, i}.
	\end{align}
	This gives the \replace{SPS-product form}{SPS-product-form} relationship between $i_*$ and $j_*$ as desired. Note that we have made use of the flexibility of the \replace{SPS-product form}{SPS-product-form} definition, which allows us to invert the sums within the SPS formula, or equivalently to invert the products within $f_{i, j}$ as defined in \eqref{eq:ps-product-form}.
	Because $i_*$ and $j_*$ were an arbitrary pair of vertices in $K_1, K_2$, this completes the proof.
\end{proof}

\paragraph*{Algorithmic discovery}

We now discuss how to algorithmically and efficiently find all second-level cuts which exist in a given graph $G$, akin to \Cref{algo:cut-graph}, which did the same for first-level cuts.

Specifically, we find all narrow second-level cuts, via the following straightforward but efficient algorithm. We iterate over all pairs $K_1, K_2$ of connected components in the cut graph $C_1(G)$.
For each pair of components, we use \Cref{algo:jaf} to check whether the components are joint-ancestor free, and hence form a narrow second-level cut.

In contrast, attempting to discover broad second-level cuts directly via a similar procedure is not nearly as straightforward, as one may in principle be required to search over all pairs of subsets $I \subseteq K_1, J \subseteq K_2$, which is inefficient.
However, in all cases that we have examined, broad second-level cuts are only present between subsets of components that form narrow second-level cuts. This motivates the following conjecture.

\paragraph*{Conjecture: Equivalence of broad and narrow second-level cuts}

We conjecture that all broad second-level cuts have a corresponding narrow second-level cut, in the sense specified in \Cref{conj:second-level} below.
As a result, we conjecture that the algorithmic procedure described above discovers all components $K_1$ and $K_2$ that contain broad second-level cuts:
If there exist $I \subseteq K_1$ and $J \subseteq K_2$ which form a second-level cut, then $K_1$ and $K_2$ are joint-ancestor free, and the above procedure will discover a narrow second-level cut between $K_1$ and $K_2$, even if that cut's source\remove{s} \replace{are not necessarily $I$ and $J$}{is not necessarily $(I, J)$ or $(K_1, K_2)$}.

\begin{conjecture} \label{conj:second-level}
	Consider a formal Markov chain $G = (V, E)$.
	For each pair of connected components $K_1$ and $K_2$ of $C_1(G)$,
	if there exist two nonempty sets
	$I \subseteq K_1$ and $J \subseteq K_2$
	such that $I$ and $J$ are join\add{t}-ancestor free in~$G$,
	then we conjecture that $K_1$ and $K_2$ are joint-ancestor free in~$G$.
	In other words, if there is a broad second-level cut with source $(I, J)$,
	then we conjecture that there is a narrow second-level cut arising from $K_1$ and $K_2$.
\end{conjecture}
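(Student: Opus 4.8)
The plan is to reduce the conjecture to a single reachability statement about the two components, and then to attack that statement by contradiction. Throughout, assume $I \subseteq K_1$ and $J \subseteq K_2$ are joint-ancestor free, and write $(A,B) := (A_I(G {\setminus} J), A_J(G {\setminus} I))$. By \Cref{lem:jaf-cuts}\ref{prop:ancestors-1} together with the joint-ancestor freeness of $I,J$, the pair $(A,B)$ is a genuine cut with $I \subseteq A$ and $J \subseteq B$, and by \Cref{prop:jaf-cuts-2}\ref{prop:ancestors-4} it is $\underline I, \underline J$-sourced for some nonempty $\underline I \subseteq I$ and $\underline J \subseteq J$; in particular every edge crossing $A \to B$ has its tail in $\underline I \subseteq K_1$, and every edge crossing $B \to A$ has its tail in $\underline J \subseteq K_2$. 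My first move is to reduce the conjectured conclusion to the two containments $A_{K_1}(G {\setminus} K_2) \subseteq A$ and $A_{K_2}(G {\setminus} K_1) \subseteq B$. Since $A \cap B = \emptyset$, these immediately give $A_{K_1}(G {\setminus} K_2) \cap A_{K_2}(G {\setminus} K_1) \subseteq A \cap B = \emptyset$, which is exactly joint-ancestor freeness of $K_1$ and $K_2$ by \Cref{def:jaf}, i.e.\ the narrow second-level cut. Each containment in turn follows from the key sub-claim
\[
(\star)\qquad K_1 \subseteq A_I(G {\setminus} J)
\]
and its mirror image for $K_2$: if $k \in A_{K_1}(G {\setminus} K_2)$, there is a path from $k$ to some $w \in K_1$ avoiding $K_2$ (hence avoiding $J$), and $(\star)$ supplies a path from $w$ to $I$ avoiding $J$; concatenating and extracting a simple path places $k \in A_I(G {\setminus} J) = A$.

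The crux is therefore $(\star)$, which is equivalent to $K_1 \cap A_J(G {\setminus} I) = \emptyset$: no node of $K_1$ can reach $J$ while avoiding $I$. I would argue by contradiction. If some node of $K_1$ lies in $B = A_J(G {\setminus} I)$, then, walking along a shortest path in the cut graph $C_1(G)$ from such a node to a node of $I \subseteq A$, one isolates two \emph{cut-graph-adjacent} nodes $c \in K_1 \cap B$ and $c' \in K_1 \cap A$. By \Cref{prop:jaf-cuts-1}, the joint-ancestor freeness of $c$ and $c'$ produces a second, $c,c'$-sourced cut $(P,Q) = (A_c(G {\setminus} c'), A_{c'}(G {\setminus} c))$ with $c \in P$ and $c' \in Q$, whose only $P \to Q$ crossing source is $c$ and whose only $Q \to P$ crossing source is $c'$. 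Superimposing $(A,B)$ and $(P,Q)$ yields useful rigidity: a minimal path $\alpha\colon c \to j_* \in J$ stays inside $B$ (its interior avoids $J$, so it never meets a $B \to A$ source) and hence omits $c' \in A$, while a minimal path $\beta\colon c' \to i_* \in I$ stays inside $A$ and omits $c \in B$. Because $c \notin \beta$ and $c'$ is the sole $Q \to P$ source, $\beta$ either remains entirely in $Q$ or enters $P$ at its first step and remains there; symmetrically for $\alpha$ with respect to $P$ and $Q$.

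The lever I expect to need is that $i_* \in K_1$ and $j_* \in K_2$ lie in \emph{distinct} components of $C_1(G)$, so by \Cref{theo:s-product-form} they are \emph{not} joint-ancestor free and therefore admit a genuine joint ancestor $m$: a node reaching $i_*$ while avoiding $j_*$ and reaching $j_*$ while avoiding $i_*$. The goal is then to combine $m$ with the rigid paths $\alpha$ and $\beta$, and with the region structure above, to exhibit either a joint ancestor of $c$ and $c'$ (contradicting $P \cap Q = \emptyset$) or a joint ancestor of the full sets $I$ and $J$ (contradicting the hypothesis), thereby refuting the existence of $c$ and establishing $(\star)$.

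I expect this final combination step to be the main obstacle, and it is presumably why the statement is left as a conjecture rather than a theorem. The difficulty is twofold. First, the directions do not chain naturally: $m$, $\alpha$, and $\beta$ all terminate at $i_*$ or $j_*$, whereas a joint ancestor of $c,c'$ requires paths \emph{into} $c$ and $c'$, so one must route backwards through the four regions $A \cap P$, $A \cap Q$, $B \cap P$, $B \cap Q$ in a consistent way. Second, $m$ is only guaranteed to avoid the single endpoints $i_*$ and $j_*$, not all of $I$ and $J$, so upgrading it to a joint ancestor of the \emph{sets} $I,J$ demands control over avoidance of every node of $I$ and $J$ at once. A natural way to organize the bookkeeping is an induction on $|K_1| + |K_2|$, or on the cut-graph distance from $c$ to $I$, keeping the ancestor sets nested via \Cref{lem:jaf-cuts}\ref{prop:ancestors-2} and the uniqueness of sourced cuts from \Cref{prop:jaf-cuts-1,prop:jaf-cuts-2}; but making the several avoidance constraints simultaneously consistent is exactly the delicate point, and I would not be surprised if closing the argument requires additional structural input about the cut graph beyond mere connectivity of $K_1$ and $K_2$.
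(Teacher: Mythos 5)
This statement is \Cref{conj:second-level}, which the paper explicitly leaves open: there is no proof in the paper to compare against, only partial progress in Appendix~B (\Cref{theo:induction-step-1}, which handles the single-node expansion step of \Cref{conj:second-level-expand} in the special case $|J|=1$, via path lemmas in the cut graph). So the question is whether your attempt constitutes a correct new proof. It does not, and the problem is more fundamental than the unfinished ``final combination step'' that you flag yourself.

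Your plan reduces the conjecture to the sub-claim $(\star)$: $K_1 \subseteq A_I(G \setminus J)$, together with its mirror for $K_2$. The reduction is logically sound in the direction you need ($(\star)$ plus its mirror does imply the conclusion, by your concatenation argument), but $(\star)$ is strictly stronger than the conjecture, and it is \emph{false}. Indeed, $(\star)$ combined with \Cref{lem:jaf-cuts}\ref{prop:ancestors-1} would force $A_I(G\setminus J) = A_{K_1}(G\setminus K_2)$ and $A_J(G\setminus I) = A_{K_2}(G\setminus K_1)$, i.e., that the broad and narrow cuts coincide as partitions of $V$ --- exactly the identification the paper warns need not hold. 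The paper's own example in \Cref{sec:batch-2} refutes $(\star)$: take $K_1 = \{0, 1, \bar 1, \bar 2\}$, $K_2 = \{2, \bar 3\}$, $I = \{1, \bar 1\}$, $J = \{2\}$. Here $A_I(G\setminus J) = \{0, 1, \bar 1\}$ and $A_J(G\setminus I) = V \setminus \{0, 1, \bar 1\}$ are disjoint, so $I$ and $J$ are joint-ancestor free (the paper notes they form a broad second-level cut); yet $\bar 2 \in K_1$ cannot reach $\{1, \bar 1\}$ while avoiding node $2$, because the only edge entering the set $\{0,1,\bar 1\}$ from outside is $2 \to 1$. Thus $K_1 \cap A_J(G\setminus I) \neq \emptyset$ and $(\star)$ fails, even though the conjecture's conclusion holds in this example ($K_1$ and $K_2$ are joint-ancestor free, with narrow cut $(K_1, V \setminus K_1)$). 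Consequently, the contradiction you hope to extract in the second half of the argument cannot exist: in this very example all of your objects coexist consistently --- $c = \bar 2$ and $c' = 1$ are adjacent in $C_1(G)$, $\alpha$ is the edge $\bar 2 \to 2$ so $j_* = 2$, $\beta$ is trivial so $i_* = 1$, and $m = \bar 1$ is a joint ancestor of $i_*$ and $j_*$ --- so no routing through the regions $A \cap P$, $A \cap Q$, $B \cap P$, $B \cap Q$ can refute them. Any viable proof must establish joint-ancestor freeness of $K_1$ and $K_2$ without forcing the broad and narrow cuts to share the same node partition; this is precisely why the paper's strategy instead tries to enlarge the source sets one node at a time (\Cref{conj:second-level-expand}), and even that is resolved only when $|J| = 1$.
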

If \Cref{conj:second-level} holds, then we would be able to remove the distinction between broad and narrow second level cuts.

The reason this claim is nontrivial is that in principle, $K_1$ and $K_2$ may have a joint ancestor even if $I$ and $J$ are joint-ancestor free. However, we were not able to build such an example.
Thus, to prove \Cref{conj:second-level}, one must leverage the fact that $K_1$ and $K_2$ are connected components of the cut graph $C_1(G)$.

We explored the following avenue towards proving this conjecture. Given joint-ancestor free subsets $I \subseteq K_1$ and $J \subseteq K_2$, we further conjecture that there always exists \remove{either }an additional node by which either $I$ or $J$ can be expanded, while preserving the joint-ancestor free property:
\begin{conjecture} \label{conj:second-level-expand}
	Given two joint-ancestor free subsets $I \subseteq K_1$ and $J \subseteq K_2$, where either $I \neq K_1, J \neq K_2$, or both,
	there exists either
	\begin{itemize}
		\item $i \in K_1 {\setminus} I$ such that $I \cup \{i\}$ and $J$ are joint-ancestor free, or
		\item $j \in K_2 {\setminus} J$ such that $I$ and $J \cup \{j\}$ are joint-ancestor free.
	\end{itemize}
\end{conjecture}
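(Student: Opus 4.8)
The plan is to reformulate joint-ancestor freeness geometrically and then argue contrapositively, showing that a \emph{maximal} joint-ancestor-free pair inside $K_1 \times K_2$ must exhaust both components. First I would fix the cut $(A, B) := (A_I(G \setminus J), A_J(G \setminus I))$; since $I$ and $J$ are joint-ancestor free, \Cref{lem:jaf-cuts}\ref{prop:ancestors-1} together with $A_I(G \setminus J) \cap A_J(G \setminus I) = \emptyset$ shows that $(A, B)$ is a genuine partition of $V$, with $I \subseteq A$ and $J \subseteq B$. So I assume that $(I, J)$ is maximal joint-ancestor free (no single node of $K_1 \setminus I$ can be appended to $I$, and none of $K_2 \setminus J$ to $J$) and aim to conclude $I = K_1$ and $J = K_2$.

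The workhorse is a \emph{safe-extension lemma}: if $w \in (A \cap K_1) \setminus I$, then $I \cup \{w\}$ and $J$ remain joint-ancestor free (and symmetrically for $w \in (B \cap K_2) \setminus J$). To prove it, note that $w \in A = A_I(G \setminus J)$ gives a $J$-avoiding path from $w$ into $I$, so concatenation yields $A_w(G \setminus J) \subseteq A$ and hence $A_{I \cup \{w\}}(G \setminus J) = A$; meanwhile enlarging the deleted set can only shrink the other ancestor set, $A_J(G \setminus (I \cup \{w\})) \subseteq A_J(G \setminus I) = B$, so the new intersection sits inside $A \cap B = \emptyset$. (Here I use $w \notin J$, which holds since $w \in K_1$ and, for distinct components, $J \subseteq K_2$.) Maximality then forces the ``bad case'' $A \cap K_1 = I$ and $B \cap K_2 = J$, since otherwise a safe node would be available.

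It remains to rule out this bad case while $I \subsetneq K_1$ (the case $J \subsetneq K_2$ being symmetric). Because $K_1$ is a connected component of $C_1(G)$ and both $I$ and $K_1 \setminus I$ are nonempty, there is a cut-graph edge bridging them: nodes $u \in I$ and $w \in K_1 \setminus I = K_1 \cap B$ that are themselves joint-ancestor free, yielding a second cut $(A', B') := (A_u(G \setminus w), A_w(G \setminus u))$. The key reduction is that any joint ancestor $k$ of $I \cup \{w\}$ and $J$ must lie in $A_w(G \setminus J) \cap B$; by tracing a $J$-avoiding $k$-to-$w$ path and asking whether it meets $u$, one gets $A_w(G \setminus J) \subseteq A \cup A_w(G \setminus u) = A \cup B'$, so every such obstructing $k$ lies in $B' \cap B$. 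Thus adding $w$ to $I$ succeeds precisely when $B' \cap B$ contains no $\{I, w\}$-avoiding ancestor of $J$.

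The main obstacle is exactly this residual case: converting the existence of an obstructing node $k \in B' \cap B$ into a \emph{legal} extension of $J$ by some node of $K_2 \setminus J$. This is where the hypothesis that $K_1$ and $K_2$ are \emph{entire} connected components of the cut graph, rather than arbitrary joint-ancestor-free sets, must finally be exploited; the safe-extension argument alone only sees the single bridge edge $\{u, w\}$. My plan here would be to choose the bridge node $w$ extremally (e.g.\ minimizing the cut-graph distance from $w$ to $I$, or minimizing $|B' \cap B|$) and then argue by descent on $|K_1| + |K_2| - |I| - |J|$, propagating the obstruction node toward $K_2$ along a chain of cut-graph edges. I expect that turning this informal ``push the obstruction across the cut'' idea into a rigorous invariant is the genuinely hard step, and it is plausibly comparable in difficulty to \Cref{conj:second-level} itself.
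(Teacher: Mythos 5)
First, note that the paper itself contains \emph{no proof} of this statement: \Cref{conj:second-level-expand} is stated as a conjecture and explicitly left open, and the only thing the authors prove in Appendix~\ref{app:second-level} is the special case $|J| = 1$ (\Cref{theo:induction-step-1}), via the cut-graph path lemmas (\Cref{lem:path-3,lem:path-n}). Your proposal is likewise not a proof, and to your credit you say so: everything after the localization of the obstruction is an informal plan (``choose $w$ extremally'', ``argue by descent''), with no invariant defined or verified. The concrete gap is exactly the step you flag: converting an obstructing node $k \in B' \cap B$ into either a legal extension of $J$ by a node of $K_2 \setminus J$ or a contradiction. Nothing in your argument uses the connectivity of $K_2$ at all, and only a single cut-graph edge of $K_1$ is used, so the machinery as it stands cannot see the hypotheses that make the conjecture plausible; in particular it does not even recover the paper's $|J| = 1$ special case, whose proof needs the full strength of \Cref{lem:path-n} applied to an entire cut-graph path inside $I \cup \{i\}$.

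That said, the partial content you do establish is correct, and it is a genuinely different reduction from the paper's partial result. Your safe-extension lemma --- if $w \in (A_I(G \setminus J) \cap K_1) \setminus I$, then $I \cup \{w\}$ and $J$ remain joint-ancestor free, because $A_{I \cup \{w\}}(G \setminus J) = A_I(G \setminus J)$ while $A_J(G \setminus (I \cup \{w\})) \subseteq A_J(G \setminus I)$ --- is sound, and it cleanly reduces the conjecture to ruling out the following configuration: a maximal joint-ancestor-free pair with $I = A_I(G \setminus J) \cap K_1$ and $J = A_J(G \setminus I) \cap K_2$, i.e.\ a cut that splits both components, with $I \subsetneq K_1$ or $J \subsetneq K_2$. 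Your localization of every obstruction to $B \cap B'$ (with $B' = A_w(G \setminus u)$ for a bridging cut-graph edge $\{u, w\}$) also checks out, although ``succeeds precisely when'' overstates it --- only the implication you actually need (no such node implies success) is established. These observations might usefully be combined with \Cref{lem:path-n} in a future attack on the conjecture, but as written your attempt stops at essentially the same wall the authors did, and the statement remains open.
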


If \Cref{conj:second-level-expand} holds, then by applying it inductively we show $K_1$ and $K_2$ must be joint-ancestor free in $G$ whenever $I$ and $J$ are, which would complete the proof of \Cref{conj:second-level}.

In Appendix~\ref{app:second-level}, we prove in \Cref{theo:induction-step-1} that in the special case where $J$ contains a single vertex ($|J| = 1$), there exists an $i \in K_1 {\setminus} I$ such that $I \cup \{i\}$ and $J$ are joint-ancestor free. However, we were not able to resolve the general conjecture, either \Cref{conj:second-level-expand} or \Cref{conj:second-level}, and leave both as open problems.

\section{Examples}
\label{sec:examples}

We now give several examples of formal Markov chains arising from queueing systems which exhibit graph-based product form.

\subsection{Multiserver jobs}
\label{sec:msj-example}

First, we give a practical example of a Markov chain with \replace{PS-product form}{PS-product-form} arising from its graph structure. This setting is taken from \cite{GHS23}\remove{.} \replace{The setting is}{and} explored in more detail in a technical report \cite{GHS20}.

Consider the following multiserver-job (MSJ) queueing system.
There are two types of jobs:
class-1 jobs, which each require 3 servers to enter service,
and class-2 jobs, which require 10 servers to enter service.
There are 30 servers in total.
Servers are assigned to jobs in first-come-first-served order,
with head-of-line blocking.
We will specifically consider a \emph{saturated} queueing system,
meaning that fresh jobs are always available, rather than having an external arrival process.
Saturated queueing systems are used to characterize the stability region \cite{BF95,FK04}
and mean response time \cite{GHHS23}
of the corresponding open system with external arrivals.

In the saturated system, fresh jobs are always available,
and a new job enters into the system whenever it is possible that this fresh job might receive service,
based on the number of available servers.
Specifically, if at least 3 servers are available, a fresh job will enter the system.
If that job is a class-2 job,
and if \replace{not 10 servers are available}{there are not 10 servers available},
the class-2 job will wait in the queue.
There will always be at most one job in the queue,
and only a class-2 job can be in the queue.
Said differently, when a job completes service,
we uncover the classes of as many fresh jobs as needed
to verify that no more fresh jobs can be added to service.
The service time of each job is exponentially distributed,
with a rate based on the class of the job:
$\mu_1$ for class-1 jobs,
and $\mu_2$ for class-2 jobs.

There are several Markov chains corresponding to this system
that we could examine at this point.
For instance, we could consider the standard CTMC,
or the embedded DTMC with steps at epochs where jobs complete.
These two Markov chains both exhibit product-form stationary distributions,
but they do not exhibit graph-based product form: the specific transition rates, not just the graph structure, produce the product-form behavior.

However, we instead examine a nonstandard DTMC,
with epochs whenever a job either \emph{enters the system} or completes.
\add{We call this DTMC the \emph{arrival-and-completion DTMC}, in contrast to the more standard \emph{completion-only DTMC} defined in \cite{GHS23}.}
In particular, whenever a job leaves upon service completion,
the DTMC transitions through a the sequence of states,
starting right after the job has completed and no fresh job has \replace{been}{yet} entered the system,
then after a the first fresh job has entered, then the second, and so on,
until there is no possibility that any further fresh jobs could immediately enter service.
This DTMC \emph{does} exhibit graph-based product form, \add{as we will show,}
and this product-form behavior transfers to the other two Markov chains
mentioned above.

\newcommand{\msjobs}{
	\foreach \i in {0, 1, 2} {
		\node[state] (\i) at (\i, 0) {\i};
		\node[phantom] (bar\i) at (\i, -1) {$\bar{\i}$};
	}
	
	\foreach \i in {3, 4, 5} {
		\node[state] (\i) at (\i, -1) {\i};
		\node[phantom] (bar\i) at (\i, -2) {$\bar{\i}$};
	}
	
	\foreach \i in {6, 7, 8, 9} {
		\node[state] (\i) at (\i, -2) {\i};
		\node[phantom] (bar\i) at (\i, -3) {$\bar{\i}$};
	}
	
	\node[state] (10) at (10, -3) {10};
}

\begin{figure}[htb]
	\begin{subfigure}{\linewidth}
		\centering
		\begin{tikzpicture}
			\msjobs
			
			\foreach \i in {1, 2, 4, 5, 7, 8, 9} {
				\pgfmathsetmacro{\j}{\i-1}
				\draw[->] (\i) -- (\j);
			}
			
			\foreach \i in {1, 2, 4, 5, 7, 8, 9} {
				\pgfmathsetmacro{\j}{\i-1}
				\draw[->] (bar\j) -- (bar\i);
			}
			
			\foreach \i in {3, 6, 10} {
				\pgfmathsetmacro{\j}{\i-1}
				\draw[-{Straight Barb[right]}, transform canvas={yshift=.8}]
				(\i) -- (bar\j);
				\draw[-{Straight Barb[right]}, transform canvas={yshift=-.8}]
				(bar\j) -- (\i);
			}
			
			\foreach \i in {0, 1, ..., 6} {
				\draw[-{Straight Barb[right]}, transform canvas={xshift=-.8}]
				(\i) -- (bar\i);
				\draw[-{Straight Barb[right]}, transform canvas={xshift=.8}]
				(bar\i) -- (\i);
			}
			
			\foreach \i in {7, ..., 9} {
				\draw[->] (bar\i) -- (\i);
			}
		\end{tikzpicture}
		\caption{Transition diagram graph $G$.}
		\label{fig:msjob-diagram}
	\end{subfigure} \\
	\begin{subfigure}{\linewidth}
		\centering
		\begin{tikzpicture}
			\msjobs
			
			\foreach \i/\j in {0/1, 1/2, 2/3, 3/4, 4/5, 5/6, 6/7, 7/8, 8/9, 9/10} {
				\draw (\i) -- (bar\i) -- (\j);
			}
			\foreach \i/\j in {7/8, 8/9, bar6/bar7, bar7/bar8, bar8/bar9} {
				\draw (\i) -- (\j);
			}
		\end{tikzpicture}
		\caption{Cut graph~$C_1(G)$.}
		\label{fig:msjob-cut}
	\end{subfigure}
	\caption{Multiserver jobs example.}
	\label{fig:msjob}
\end{figure}
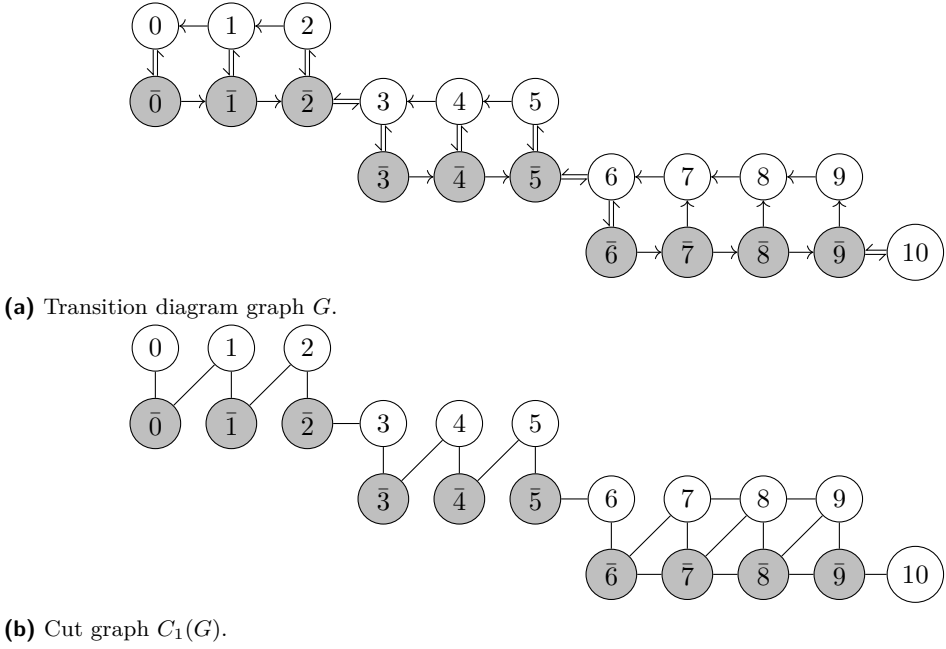

\add{The process of switching back from a DTMC associated with a more fine-grained embedding sequence (e.g., arrivals and completions) to a DTMC associated with a more coarse-grained embedding sequence (e.g., only completions) always preserves product-form relationships between the coarse-grained states, as we can see it as a subchain of the fine-grained DTMC restricted to a subset of the states. See Step~\ref{step2} in the proof of \Cref{theo:no-product-form} for a similar idea. However, switching embeddings may turn graph-based product-form relationships into probability-specific (non-graph-based) product-form relationships, as multiple transitions in the fine-grained DTMC may become a single transition in the coarse-grained DTMC, with transition probability written as a product of transitions in the fine-grained DTMC. This distinction is present in our setting.}

More specifically, to reveal the graph-based product form,
we examine the embedded DTMC of this system, examining moments at which jobs enter the system or complete.
There are two kinds of states in the system:
\emph{completion states},
from which the next transition is a service completion,
and \emph{arrival states},
from which the next transition is a job arrival.
In any state where at least 3 servers are available, and no job is in the queue,
a fresh job enters the system.
That job is a class-1 job with probability $p_1$, and a class-2 job otherwise.
Otherwise, jobs complete.
Class-1 jobs have service rate~$\mu_1$, and class-2 jobs have service rate~$\mu_2$,
resulting in some probability of a completion of each class of job.
After a completion, the job in the queue (if any) moves into service if enough servers are available,
transitioning to a new state.

We denote states by the number of class-1 jobs in the system, from 0 to 10,
and by whether the state is a completion state or an \replace{entering}{arrival} state.
This naming convention is sufficient to differentiate all states in the system (see more details in \cite[Section~4]{GHS23}).
An overbar denotes an arrival state, while the number \replace{alone}{without an overbar} denotes a completion state.
For instance, state $4$ consists of $4$ class-1 jobs and $1$ class-2 job in service, for a total of 22 servers occupied, with another class-2 job in the queue.
State $\bar{4}$ is the same state but without the class-2 job in the queue.
Note that the arrival states such as $\bar{4}$ are present in this arrivals-and-completions embedded DTMC, and would not be present in a more standard completions-only DTMC.
\add{Such a completions-only DTMC can be obtained from the arrivals-and-completions DTMC by combining a completion transition with the immediately following arrival transitions into a single transition in the completions-only DTMC.}

\Cref{fig:msjob-diagram} shows the graph~$G$ underlying the Markov chain for this saturated MSJ queue, with white backgrounds for completion states and grey backgrounds for arrival states.
For instance, starting in state $4$, a class-1 job can complete, transitioning to state $3$,
with $3$ class-1 jobs and $2$ class-2 jobs in service,
or a class-2 job can complete, transitioning to state $\bar{4}$.
From state $\bar{4}$, a class-1 job can enter the system, transitioning to state $\bar{5}$,
with $5$ class-1 jobs and $1$ class-2 job in service,
or a class-2 job can enter, transitioning to state $4$.

\Cref{fig:msjob-cut} shows the (first-level) cut graph~$C_1(G)$ corresponding to this graph.
For instance, there is a first-level cut between nodes $4$ and $\bar{4}$.
This cut partitions the graph \replace{notes}{nodes} into two subsets:
$A_{\bar{4}}(G {\setminus} 4) = \{0, \bar{0}, \ldots, 3, \bar{3}, \bar{4}\}$
and $A_4(G {\setminus} \bar{4}) = \{4, 5, \bar{5}, \ldots, \bar{9}, 10\}$.
The only edges crossing this cut are the outgoing edges from $4$ and $\bar{4}$.

\begin{table}[htb]
	\centering
	\mbox{
		\begin{tabular}{l|l}
			Nodes & Product-form Relation \\
			\hline
			$0$ and $\bar 0$
			& \replace{$\pi(\bar 0) (q_{\bar 0, 0} + q_{\bar 0, 1}) = \pi(0) q_{0, \bar 0}$}{$\pi_{\bar 0} (q_{\bar 0, 0} + q_{\bar 0, \bar 1}) = \pi_0 q_{0, \bar 0}$} \\
			$\bar 0$ and 1
			& \replace{$\pi(1) q_{1, 0} = \pi(\bar 0) q_{\bar 0, \bar 1}$}{$\pi_1 q_{1, 0} = \pi_{\bar 0} q_{\bar 0, \bar 1}$} \\
			$1$ and $\bar 1$
			& \replace{$\pi(\bar 1) (q_{\bar1, 1} + q_{\bar 1, \bar 2}) = \pi(1) (q_{1, 0} + q_{1, \bar 1})$}{$\pi_{\bar 1} (q_{\bar1, 1} + q_{\bar 1, \bar 2}) = \pi_1 (q_{1, 0} + q_{1, \bar 1})$} \\
			$\bar 1$ and $2$
			& \replace{$\pi(2) q_{2, 1} = \pi(\bar 1) q_{\bar 1, \bar 2}$}{$\pi_2 q_{2, 1} = \pi_{\bar 1} q_{\bar 1, \bar 2}$} \\
			$2$ and $\bar 2$
			& \replace{$\pi(\bar 2) q_{\bar 2, 3} = \pi(2) (q_{2, 1} + q_{2, \bar 2})$}{$\pi_{\bar 2} q_{\bar 2, 3} = \pi_2 (q_{2, 1} + q_{2, \bar 2})$} \\
			$\bar 2$ and $3$
			& \replace{$\pi(3) q_{3, \bar 2} = \pi(\bar 2) q_{\bar 2, 3}$}{$\pi_3 q_{3, \bar 2} = \pi_{\bar 2} q_{\bar 2, 3}$} \\
			$3$ and $\bar 3$
			& \replace{$\pi(\bar 3) (q_{\bar 3, 3} + q_{\bar 3, 4}) = \pi(3) q_{3, \bar 3}$}{$\pi_{\bar 3} (q_{\bar 3, 3} + q_{\bar 3, \bar 4}) = \pi_3 q_{3, \bar 3}$} \\
			$\bar 3$ and $4$
			& \replace{$\pi(4) q_{4, 3} = \pi(\bar 3) q_{\bar 3, 4}$}{$\pi_4 q_{4, 3} = \pi_{\bar 3} q_{\bar 3, \bar 4}$}
	\end{tabular}}
	\caption{First-level cuts involving states $i$ or $\bar i$,
		for $i \in \{0, 1, 2, 3\}$, in the multiserver job example.}
	\label{tab:msjob}
\end{table}

\Cref{tab:msjob} lists some of the cuts
and the corresponding product-form relationships between pairs of nodes,
where the transition rate from state~$i$ to state~$j$ is denoted by~$q_{i, j}$.
Note that each first-level cut gives rise to an S-product-form relationship,
defined in \Cref{sec:product-form-def},
as shown in \Cref{lem:cut-implies-product-form}.
Because the cut graph is fully connected, as shown in \Cref{fig:msjob-cut},
every pair of nodes has a PS-product-form relationship, by composing S-product-form relationships
along a path connecting those nodes in the cut graph, as described in \cref{lem:cut-graph-ps}.
As a result, the entire formal Markov chain has a PS-product-form stationary distribution\replace{.}{, as follows:}
\add{\begin{align*}
		\pi_{i} &= \pi_0 \prod_{j=0}^{i-1}
		\frac{q_{j,j-1} \indicator{j \not\in \{0,3,6,10\}} + q_{j,\bar{j}}
		}{q_{\bar{j},j} + q_{\bar{j},\overline{j+1}} \indicator{j \not\in \{2,5,9\}}}
		\frac{q_{\bar{j},\overline{j+1}}}{q_{j+1,j}}, \\
		\pi_{\bar{i}} &= \pi_0 \prod_{j=0}^{i}
		\frac{q_{j,j-1} \indicator{j \not\in \{0,3,6,10\}} + q_{j,\bar{j}}
		}{q_{\bar{j},j} + q_{\bar{j},\overline{j+1}} \indicator{j \not\in \{2,5,9\}}}
		\prod_{j=0}^{i-1} \frac{q_{\bar{j},\overline{j+1}}}{q_{j+1,j}}.
\end{align*}}%
If the three structural parameters of the system are changed (3 servers for \replace{class 1}{class-1} jobs, 10 servers for \replace{class 2}{class-2} jobs, 30 servers total), \replace{PS product form}{PS-product-form} continues to hold.

In prior work \cite{GHS23}, this saturated queueing system was shown to have a PS-product-form stationary distribution, and that stationary distribution was given with respect to the specific transition probabilities $q_{i,j}$ for that system. This work further illuminates the system, demonstrating that the underlying graph of the Markov chain gives rise to the product-form stationary distribution. A similar product form would exist for a system with arbitrary transition rates and the same graph.

\remove{Critical to this \replace{graph-based product-form}{graph-based product form} is the inclusion of the arrival states~$\bar{i}$, in addition to the completion states~$i$. Consider for instance the embedded DTMC with only completion states and no arrival states: arrival states~$\bar{i}$ are deleted, and transitions involving these states are replaced with transitions between completion states~$i$ whose rates take the form of a product. This completion-only embedded DTMC, which is the one studied by \cite{GHS23}, still has a product-form stationary distribution, related to the stationary distribution of our completion-and-arrival DTMC by removing all arrival states and renormalizing over the completion states. In particular, all ratios of stationary probability between completion states are preserved, thus maintaining the product-form distribution. However, this DTMC does not exhibit \replace{graph-based product-form}{graph-based product form}. One can verify that its product-form is ``probability-specific'' in the sense that it is a consequence of the fact that some transition rates in the completions-only DTMC are themselves written as products.}

\subsection{Queue with batch arrivals: \add{version 1}}
\label{sec:batch1}

Next, we give an example of a queueing Markov chain whose associated cut graph is not connected via first-level cuts, but is connected via second-level cuts as discussed in \Cref{sec:second-level}, giving rise to \replace{PSPS-product form}{PSPS-product-form} from its graph structure.
Note that for this particular Markov chain, \Cref{conj:second-level} holds,
so we do not need to distinguish between broad second-level cuts and narrow second-level cuts.

Consider a single-server queueing system with structured batch arrivals. Jobs arrive in batches of size sampled from a geometric distribution, but the batch size is truncated to not bring the total number of jobs in the system above the next multiple of~3. \replace{More specifically}{For instance}, if there were $4$ jobs present prior to an arrival, the number of jobs in queue would be increased to $\min(4 + N, 6)$ where $\mathbb{P}(N = n) = (1 - p) p^{n-1}$ for each $n \in \{1, 2, 3, \ldots\}$,
with the truncation ensuring that the total number of jobs after the batch arrival does not exceed 6.
Jobs are indistinguishable, with exponentially-distributed service times. Batch arrivals occur according to a Poisson process and have i.i.d.\ sizes.

\begin{figure}[htb]
	\begin{subfigure}{\textwidth}
		\centering
		\begin{tikzpicture}
			\batchfirstnodes
			
			\foreach \i/\j in {0/1, 1/2, 2/3, 3/4, 4/5, 5/6, 6/7, 7/8} {
				\draw[->] (\j) -- (\i);
				\draw[->] (\i) -- (bar\j);
			}
			
			\foreach \i/\j in {1/2, 2/3, 4/5, 5/6, 7/8} {
				\draw[->] (bar\i) -- (bar\j);
			}
			
			\foreach \i/\j in {0/1, 1/2, 2/3, 3/4, 4/5, 5/6, 6/7} {
				\draw[->] (bar\j) -- (\j);
			}
		\end{tikzpicture}
		\caption{Transition diagram graph~$G$.}
		\label{fig:batch1-diagram}
	\end{subfigure} \\[.4cm]
	\begin{subfigure}{\textwidth}
		\centering
		\begin{tikzpicture}
			\batchfirstnodes
			
			\foreach \i in {1, 2, ..., 7} {
				\draw (\i) -- (bar\i);
			}
			
			\foreach \i/\j in {3/bar4, 6/bar7} {
				\draw (\i) -- (\j);
			}
			
			\draw (1) -- (0) -- (bar1);
			\draw (3) -- (4) (6) -- (7);
			
			\batchfirstcliques
			
			\batchfirstnodes
			
			\foreach \i in {1, 2, ..., 7} \draw[newedge] (\i) -- (bar\i);
			\foreach \i/\j in {3/bar4, 6/bar7} \draw[newedge] (\i) -- (\j);
			\draw[newedge] (1) -- (0) -- (bar1);
			\draw[newedge] (3) -- (4) (6) -- (7);
		\end{tikzpicture}
		\caption{First-level cut graph $C_1(G)$. Edges (solid lines) connect nodes that are in an S-product-form relationship. Dashed contours outline connected components of~$C_1(G)$.}
		\label{fig:batch1-cut1}
	\end{subfigure} \\[.4cm]
	\begin{subfigure}{\textwidth}
		\centering
		\begin{tikzpicture}
			\batchfirstnodes
			
			\batchfirstcliques
			
			\foreach \i in {1, 2, ..., 7} \draw (\i) -- (bar\i);
			\foreach \i/\j in {3/bar4, 6/bar7} \draw (\i) -- (\j);
			\draw (1) -- (0) -- (bar1);
			\draw (3) -- (4) (6) -- (7);
			
			\foreach \i\j in {1/2, 2/3, 4/5, 5/6} {
				\node[circle, fill, inner sep=1pt]
				(dot) at ($(\i)!.5!(bar\j)$) {};
				\draw[newedge] (dot) -- (\i) (dot) -- (bar\i) (dot) -- (\j);
			}
			
			\batchfirstnodes
		\end{tikzpicture}
		\caption{Second-level cut graph~$C_2(G)$. Solid lines intersecting at a dot represent second-level cuts, showing the source nodes of the cut.}
		\label{fig:batch1-cut2}
	\end{subfigure}
	\caption{Queue with batch arrivals, first version.}
	\label{fig:batch1}
\end{figure}
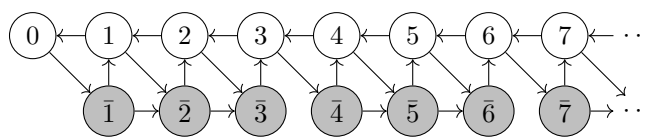
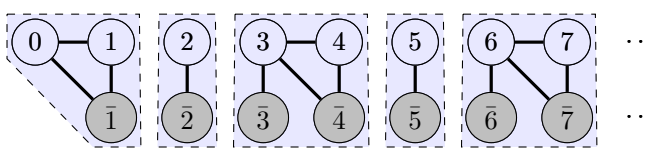
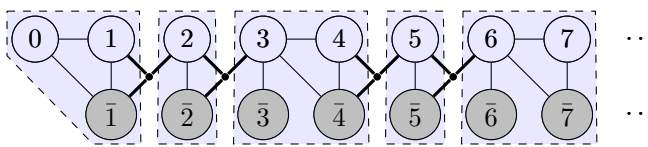

We consider the embedded DTMC of the system, examining moments when jobs enter the system or complete. 
\Cref{fig:batch1-diagram} shows the graph~$G$ underlying the Markov chain for this batch-arrivals system. It is infinite in one direction,
in contrast to the finite graph we considered in \Cref{sec:msj-example}.
In the same spirit as \Cref{sec:msj-example},
there are two kinds of states: during a batch, when a job has just arrived, and in the bulk of time, when jobs may complete or a new batch may arrive. We denote states by the number of jobs in the system, and by whether the state is immediately-post-arrival, shown in grey, or spanning a nonzero amount of time, shown in white.
An overbar denotes an immediately-post-arrival state.
\remove{For instance, state $4$ has four jobs in the system, and jobs may complete or a new batch may begin. 
	State $\bar{7}$ has seven jobs, and a batch of arrivals is ongoing.}
When a batch ends, the system transitions from the immediately-post-arrival state to the corresponding general-time state, such as from state $\bar{2}$ to state $2$.
\add{For instance, state $4$ has four jobs in the system, and jobs may complete or a new batch may begin; state~$\bar{4}$ also has four jobs, but a batch of arrivals is ongoing. If a new batch begins while the Markov chain is in state~$4$, the Markov chain jumps state~$\bar 5$, meaning that the batch contains \emph{at least} one job. Once it is in state~$\bar 5$, the Markov chain either moves to state~$5$ (if the batch is of size 1) or to state~$\bar 6$ (if the batch size is at least 2). In the latter case, the Markov chain necessarily jumps to state~$6$ because batches are truncated.}

\Cref{fig:batch1-cut1} shows the first-level cut graph $C_1(G)$ corresponding to this graph. For instance, there is a first-level cut between nodes $3$ and $\bar{4}$.
This cut partitions the graph into two subsets: $A_{\bar{4}}(G {\setminus} 3) = \{\bar{4}\}$, and $A_3(G {\setminus} \bar{4}) = V {\setminus} \{\bar{4}\}$.
This cut gives rise to an \replace{S-product form}{S-product-form} relationship between nodes $3$ and $\bar{4}$. More first-level cuts and corresponding S-product-form relationships are listed in \Cref{tab:batch1}.

\begin{table}[ht]
	\centering
	\begin{tabular}{c|l|l}
		Level & Nodes & Equation \\
		\hline
		\multirow{11}{.2cm}{1}
		& $0$ and $1$
		& \replace{$\pi(0) q_{0, \bar 1} = \pi(1) q_{1, 0}$}{$\pi_0 q_{0, \bar 1} = \pi_1 q_{1, 0}$} \\
		& $0$ and $\bar 1$
		& \replace{$\pi(0) q_{0, \bar 1} = \pi(\bar 1) (q_{\bar 1, 1} + q_{\bar 1, \bar 2})$}{$\pi_0 q_{0, \bar 1} = \pi_{\bar 1} (q_{\bar 1, 1} + q_{\bar 1, \bar 2})$} \\
		& $1$ and $\bar 1$
		& \replace{$\pi(1) q_{1, 0} = \pi(\bar 1) (q_{\bar 1, 1} + q_{\bar 1, \bar 2})$}{$\pi_1 q_{1, 0} = \pi_{\bar 1} (q_{\bar 1, 1} + q_{\bar 1, \bar 2})$} \\
		& $2$ and $\bar 2$
		& \replace{$\pi(2) q_{2, 1} = \pi(\bar 2) (q_{\bar 2, 2} + q_{\bar 2, \bar 3})$}{$\pi_2 q_{2, 1} = \pi_{\bar 2} (q_{\bar 2, 2} + q_{\bar 2, \bar 3})$} \\
		& $3$ and $\bar 3$
		& \replace{$\pi(3) q_{3, 2} = \pi(\bar 3) (q_{\bar 3, 3} + q_{\bar 3, \bar 2})$}{$\pi_3 q_{3, 2} = \pi_{\bar 3} (q_{\bar 3, 3} + q_{\bar 3, \bar 2})$} \\
		& $3$ and $4$
		& \replace{$\pi(3) q_{3, \bar 4} = \pi(4) q_{4, 3}$}{$\pi_3 q_{3, \bar 4} = \pi_4 q_{4, 3}$} \\
		& $4$ and $\bar 4$
		& \replace{$\pi(4) q_{4, 3} = \pi(\bar 4) (q_{\bar 4, 4} + q_{\bar 4, \bar 3})$}{$\pi_4 q_{4, 3} = \pi_{\bar 4} (q_{\bar 4, 4} + q_{\bar 4, \bar 3})$} \\
		& $5$ and $\bar 5$
		& \replace{$\pi(5) q_{5, 4} = \pi(\bar 5) (q_{\bar 5, 5} + q_{\bar 5, \bar 4})$}{$\pi_5 q_{5, 4} = \pi_{\bar 5} (q_{\bar 5, 5} + q_{\bar 5, \bar 4})$} \\
		& $6$ and $\bar 6$
		& \replace{$\pi(6) q_{6, 5} = \pi(\bar 6) (q_{\bar 6, 6} + q_{\bar 6, \bar 5})$}{$\pi_6 q_{6, 5} = \pi_{\bar 6} (q_{\bar 6, 6} + q_{\bar 6, \bar 5})$} \\
		& $6$ and $7$
		& \replace{$\pi(6) q_{6, \bar 7} = \pi(7) q_{7, 6}$}{$\pi_6 q_{6, \bar 7} = \pi_7 q_{7, 6}$} \\
		& $7$ and $\bar 7$
		& \replace{$\pi(7) q_{7, 6} = \pi(\bar 7) (q_{\bar 7, 7} + q_{\bar 7, \bar 6})$}{$\pi_7 q_{7, 6} = \pi_{\bar 7} (q_{\bar 7, 7} + q_{\bar 7, \bar 6})$} \\
		\hline
		\multirow{4}{.2cm}{2}
		& $\{1, \bar 1\}$ and $2$
		& \replace{$\pi(1) q_{1, \bar 2} + \pi(\bar 1) q_{\bar 1, \bar 2} = \pi(2) q_{2, 1}$}{$\pi_1 q_{1, \bar 2} + \pi_{\bar 1} q_{\bar 1, \bar 2} = \pi_2 q_{2, 1}$} \\
		& $\{2, \bar 2\}$ and $3$
		& \replace{$\pi(2) q_{2, \bar 3} + \pi(\bar 2) q_{\bar 2, \bar 3} = \pi(3) q_{3, 2}$}{$\pi_2 q_{2, \bar 3} + \pi_{\bar 2} q_{\bar 2, \bar 3} = \pi_3 q_{3, 2}$} \\
		& $\{4, \bar 4\}$ and $5$
		& \replace{$\pi(4) q_{4, \bar 5} + \pi(\bar 4) q_{\bar 4, \bar 5} = \pi(5) q_{5, 4}$}{$\pi_4 q_{4, \bar 5} + \pi_{\bar 4} q_{\bar 4, \bar 5} = \pi_5 q_{5, 4}$} \\
		& $\{5, \bar 5\}$ and $6$
		& \replace{$\pi(5) q_{5, \bar 6} + \pi(\bar 5) q_{\bar 5, \bar 6} = \pi(6) q_{6, 5}$}{$\pi_5 q_{5, \bar 6} + \pi_{\bar 5} q_{\bar 5, \bar 6} = \pi_6 q_{6, 5}$}
	\end{tabular}
	\caption{Cuts associated with
		the single-server queue batch example of \Cref{fig:batch1}.}
	\label{tab:batch1}
\end{table}

The connected components of $C_1(G)$, illustrated with dashed outlines in \Cref{fig:batch1-cut1}, each contain two to four vertices. Between these components, there exist second-level cuts, as shown in \Cref{fig:batch1-cut2}.
For instance, between components $K_1 = \{2, \bar{2}\}$ and $K_2 = \{3, \bar{3}, 4, \bar{4}\}$,
there exists a cut with source\remove{s} \add{$(I, J)$, where} $I = K_1 = \{2, \bar{2}\}$ and $J = \{3\}, J \subseteq K_2$. Correspondingly, $K_1$ and $K_2$ are joint-ancestor free.

This cut partitions the graph into two subsets: $A_I(G {\setminus} J) = \{0, 1, \bar{1}, 2, \bar{2}\},$
and $A_J(G {\setminus} I) = \{3, \bar{3}, 4, \bar{4}, 5, \bar{5}, \ldots\}$.
Due to this second-level cut, the graph induces an SPS-product-form relationship between each pair of vertices in $K_1$ and $K_2$, by \Cref{lem:second-level-sps}. Each connected component has a second-level cut with each of its neighbors, as shown in \Cref{fig:batch1-cut2} and in \Cref{tab:batch1}, so the Markov chain has a PSPS-product-form. \add{Every pair of states has a PSPS-product-form relationship. For example, to find a PSPS-relationship between states 1 and 3, we can combine the first-level cuts between states 1 and $\bar{1}$ and between states 2 and $\bar{2}$, with the second level cuts between $\{1, \bar{1}\}$ and $2$ and between $\{2, \bar{2}\}$ and 3, as given in }\cref{tab:batch1}.
\add{By doing so, we obtain the following PSPS-relationship between states 1 and 3:}
\add{\begin{align*}
		\pi_1
		\left(q_{1,\bar{2}} + \frac{q_{1,0} q_{\bar{1},\bar{2}}}{q_{\bar{1},1} + q_{\bar{1},\bar{2}}}\right)
		\left(q_{2,\bar{3}} + \frac{q_{2,1} q_{\bar{2},\bar{3}}}{q_{\bar{2},2} + q_{\bar{2},\bar{3}}}\right)
		=
		\pi_3 q_{2, 1}  q_{3,2}.
\end{align*}}

\replace{The same is true}{The system still has PSPS-product-form} if one varies the parameters defining the system and its Markov chain, for instance by changing the multiplier 3 that truncates the batches to some other multiplier, or truncating the batches at an arbitrary sequence of cutoff values.

\add{As in \Cref{sec:msj-example},
	the addition of the immediately-post-arrival states $\bar i$
	is instrumental in obtaining a \replace{graph-based product-form}{graph-based product form}.
	To see why, consider the embedded DTMC in which the immediately-post-arrival states $\bar i$ are deleted,
	and transitions involving these states are replaced with transitions
	between general-time states whose transition probabilities take the form of a product.
	For instance, this embedded DTMC has a transition from state~$4$ to state~$5$
	with rate $q_{4, \bar 5} q_{\bar 5, 5}$.
	This embedded DTMC still has a product-form stationary distribution.
	However, this product-form follows not only from the graph structure,
	but also from the fact that the arrival transition rates are products.
	Intuitively, by adding the immediately-post-arrival states,
	we encode explicitly in the transition diagram the fact that
	the arrival transitions can be written are products.
	A similar intuition will hold for the example of \Cref{sec:batch-2}.}

\subsection{Queue with batch arrivals: \add{version~2}} \label{sec:batch-2}

Finally, we give an example of a queueing system with a positive number of first-level cuts, second-level, third-level, and so forth, but for which the entire graph is not connected via any finite level of cuts.

Consider a single-server queueing system with \remove{un}structured batch arrivals, of size either 1 or 2. Jobs arrive in batches of size 1 with probability $p_1$ and 2 with probability $p_2$, with $p_1 + p_2 = 1$. Jobs are \add{statistically} indistinguishable, with exponential service time\add{s}. Batch arrivals occur according to a Poisson process.

We examine the embedded DTMC of the system, examining moments when jobs enter the system or complete. We use the same state representation as in \Cref{sec:batch1}.
\Cref{fig:batch2-diagram} shows the graph~$G$ underlying the Markov chain for this batch-arrivals system. \Cref{fig:batch2-cut1} shows the first-level cut graph $C_1(G)$ corresponding to this graph. \Cref{tab:batch2-1} lists some such cuts and the corresponding \replace{S-product form}{S-product-form} relationships.

\newcommand{\batchsecond}{
	\foreach \i in {0, 1, ..., 5} {
		\node[state] (\i) at (\i, 0) {$\i$};
	}
	
	\foreach \i in {1, 3, 5} {
		\node[phantom] (bar\i) at (\i, 1) {$\bar \i$};
	}
	
	\foreach \i in {2, 4} {
		\node[phantom] (bar\i) at (\i, -1) {$\bar \i$};
	}
	
	\node (6) at (6, 0) {$\cdots$};
	\node (bar6) at (6, -1) {$\cdots$};
}

\begin{figure}[htb]
	\begin{subfigure}{.49\textwidth}
		\centering
		\begin{tikzpicture}
			\batchsecond
			
			\foreach \i/\j/\k in {0/1/2, 1/2/3, 2/3/4, 3/4/5, 4/5/6} {
				\draw[->] (\j) -- (\i);
				\draw[->] (\i) -- (bar\j);
				\draw[->] (bar\j) -- (\j);
				\draw[->] (bar\j) -- (\k);
			}
			\draw[->] (6) -- (5);
			\draw[->] (bar6) -- (5);
		\end{tikzpicture}
		\caption{Transition diagram~$G$.}
		\label{fig:batch2-diagram}
	\end{subfigure} \hfill
	\begin{subfigure}{.49\textwidth}
		\centering
		\begin{tikzpicture}
			\batchsecond
			
			\draw[newedge] (1) -- (0) -- (bar1) -- (1) -- (bar2);
			\draw[newedge] (2) -- (bar3);
			\draw[newedge] (3) -- (bar4);
			\draw[newedge] (4) -- (bar5);
			\draw[newedge] (5) -- (bar6);
		\end{tikzpicture}
		\caption{First-level cut graph~$C_1(G)$.}
		\label{fig:batch2-cut1}
	\end{subfigure} \\[.4cm]
	\begin{subfigure}{.49\textwidth}
		\centering
		\begin{tikzpicture}
			\batchsecond
			
			\draw (1) -- (0) -- (bar1) -- (1) -- (bar2);
			\draw (2) -- (bar3);
			\draw (3) -- (bar4);
			\draw (4) -- (bar5);
			\draw (5) -- (bar6);
			
			\node[circle, fill, inner sep=1pt] (dot) at ($(1)!.5!(2)$) {};
			\draw[newedge] (dot) -- (bar1) (dot) -- (2) (dot) -- (bar2);
		\end{tikzpicture}
		\caption{Second-level cut graph $C_2(G)$.}
		\label{fig:batch2-cut2}
	\end{subfigure}
	\begin{subfigure}{.49\textwidth}
		\centering
		\begin{tikzpicture}
			\batchsecond
			
			\draw (1) -- (0) -- (bar1) -- (1) -- (bar2);
			\draw (2) -- (bar3);
			\draw (3) -- (bar4);
			\draw (4) -- (bar5);
			\draw (5) -- (bar6);
			
			\node[circle, fill, inner sep=1pt] (dot) at ($(1)!.5!(2)$) {};
			\draw (dot) -- (bar1) (dot) -- (2) (dot) -- (bar2);
			
			\node[circle, fill, inner sep=1pt] (dot) at ($(2)!.5!(3)$) {};
			\draw[newedge] (dot) -- (bar2) (dot) -- (3) (dot) -- (bar3);
		\end{tikzpicture}
		\caption{Third-level cut graph $C_3(G)$.}
		\label{fig:batch2-cut3}
	\end{subfigure}
	\caption{Queue with batch arrivals, second version.}
	\label{fig:batch2}
\end{figure}
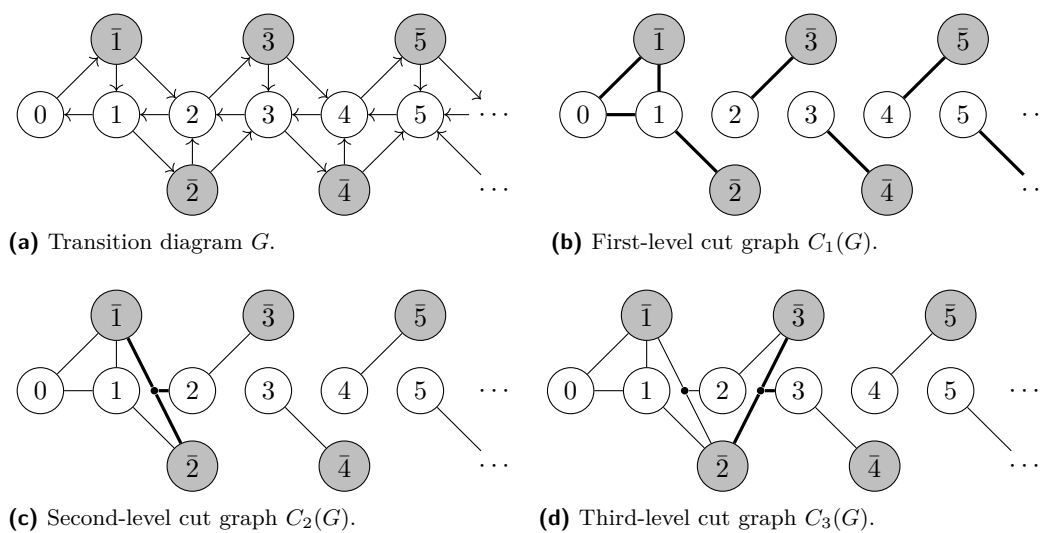

In contrast to the \remove{structured }batch setting in \Cref{sec:batch1}, there are very few second-level cuts in this graph. In fact, the only second level cuts are between subsets of the two leftmost components of the cut graph, namely $K_1 = \{0, 1, \bar{1}, \bar{2}\}$ and $K_2 = \{2, \bar{3}\}$, as shown in \Cref{fig:batch2-cut2}. 
A narrow second-level cut exists between these components, namely $A_{K_1}(G {\setminus} K_2) = K_1$ and $A_{K_2}(G {\setminus} K_1) = G {\setminus} K_1$. This cut has source $(\{\bar{1}, \bar{2}\}, \{2\})$, as shown in \Cref{fig:batch2-cut2}.

This cut implies an SPS product-form relationship between every pair of vertices $i \in K_1$ and $j \in K_2$. The key equation for defining that SPS relationship is the second-level equation in \Cref{tab:batch2-1}.

\begin{table}[b]
	\centering
	\begin{tabular}{c|l|l}
		Level & Nodes & Equation \\
		\hline
		\multirow{6}{.2cm}{1}
		& $0$ and $1$
		& \replace{$\pi(0) q_{0, \bar 1} = \pi(1) q_{1, 0}$}{$\pi_0 q_{0, \bar 1} = \pi_1 q_{1, 0}$} \\
		& $0$ and $\bar 1$
		& \replace{$\pi(0) q_{0, \bar 1} = \pi(\bar 1) (q_{\bar 1, 1} + q_{\bar 1, 2})$}{$\pi_0 q_{0, \bar 1} = \pi_{\bar 1} (q_{\bar 1, 1} + q_{\bar 1, 2})$} \\
		& $1$ and $\bar 1$
		& \replace{$\pi(1) q_{1, 0} = \pi(\bar 1) (q_{\bar 1, 1} + q_{\bar 1, 2})$}{$\pi_1 q_{1, 0} = \pi_{\bar 1} (q_{\bar 1, 1} + q_{\bar 1, 2})$} \\
		& $2$ and $\bar 3$
		& \replace{$\pi(2) q_{2, \bar 3} = \pi(\bar3) (q_{\bar 3, 3} + q_{\bar 3, 4})$}{$\pi_2 q_{2, \bar 3} = \pi_{\bar 3} (q_{\bar 3, 3} + q_{\bar 3, 4})$} \\
		& $3$ and $\bar 4$
		& \replace{$\pi(3) q_{3, \bar 4} = \pi(\bar4) (q_{\bar 4, 4} + q_{\bar 4, 5})$}{$\pi_3 q_{3, \bar 4} = \pi_{\bar 4} (q_{\bar 4, 4} + q_{\bar 4, 5})$} \\
		& $4$ and $\bar 5$
		& \replace{$\pi(4) q_{4, \bar 5} = \pi(\bar5) (q_{\bar 5, 5} + q_{\bar 5, 6})$}{$\pi_4 q_{4, \bar 5} = \pi_{\bar 5} (q_{\bar 5, 5} + q_{\bar 5, 6})$} \\
		\hline
		2 & $\{\bar 1, \bar 2\}$ and $2$
		& \replace{$\pi(\bar 1) q_{\bar 1, 2} + \pi(\bar 2) (q_{\bar 2, 2} + q_{\bar 2, 3}) = \pi(2) q_{2, 1}$}{$\pi_{\bar 1} q_{\bar 1, 2} + \pi_{\bar 2} (q_{\bar 2, 2} + q_{\bar 2, 3}) = \pi_2 q_{2, 1}$} \\
		\hline
		3 & $\{\bar 2, \bar 3\}$ and $3$
		& \replace{$\pi(\bar 2) q_{\bar 2, 3} + \pi(\bar 3) (q_{\bar 3, 3} + q_{\bar 3, 4}) = \pi(3) q_{3, 2}$}{$\pi_{\bar 2} q_{\bar 2, 3} + \pi_{\bar 3} (q_{\bar 3, 3} + q_{\bar 3, 4}) = \pi_3 q_{3, 2}$}
	\end{tabular}
	\caption{Cuts associated with
		the single-queue batch example of \Cref{fig:batch2}.}
	\label{tab:batch2-1}
\end{table}

Note that other subsets of $K_1$ and $K_2$ also form broad second-level cuts, such as $\{\bar{1}, 1\} \subset K_1, \{2\} \subset K_2$. This is the behavior predicted by \Cref{conj:second-level}, which states that broad second level cuts will only exist between subsets of connected components that also have narrow second-level cuts, but that the sources may differ.

We can recursively define third-level cuts based on the connected components of the second-level cut graph. However, as the second-level cut graph only adds a single hyperedge, the only difference between the first-level components and the second-level components is that $K_1$ and $K_2$ are combined into a single component. As result, there is only a single third-level cut, shown in \Cref{fig:batch2-cut3}. We can continue on to higher and higher levels of the cut graph, adding a single cut each time.

Corresponding to the fact that the $n$th cut graph is not fully connected for any finite level $n$, the Markov chain does not exhibit \add{a} (PS)$^n$ or S(PS)$^n$ \replace{product form}{product-form} \add{stationary distribution} for any finite level $n$. However, any two specific nodes $i, j \in V$ are connected in some (potentially large) level of the cut graph, and are in an S(PS)$^n$ product-form relationship for some correspondingly large $n$.

\paragraph*{Acknowledgements}

Thank you to Jean-Michel Fourneau for pointing out \cite{F87} in an informal discussion on queueing systems with product-form stationary distributions.
\add{The authors are also grateful to the two anonymous reviewers for their valuable feedback on an earlier version of the paper; in particular, the alternative proof of \Cref{lem:cut-equations} based on the strong law of large numbers for ergodic Markov chains and the suggestion of \Cref{ex:tree,ex:qbd}.}

\appendix

\section{Cliques in the cut graph} \label{app:clique}

As announced in \add{\Cref{ex:one-way-cycle} and }\Cref{sec:cut-graph},
\Cref{theo:clique} below gives a necessary and sufficient condition
for the existence of a clique in the cut graph
of a formal Markov chain.
This condition can be seen as an extension of \Cref{prop:jaf-cuts-1},
\replace{which corresponds to}{as an edge is} a clique of size~2.
\add{\Cref{fig:clique} shows a toy example
	that will be discussed after the theorem.}

\begin{theorem} \label{theo:clique}
	Consider a formal Markov chain $G = (V, E)$
	and a set $K \subseteq V$ of $n \ge 2$ nodes.
	Also let $V_i = A_i(G {\setminus} (K {\setminus} \{i\}))$ for each $i \in K$,
	and consider the directed graph $Q = (K, L)$ where
	$L = \{(i, j) \in K \times K: i \neq j~\text{and}~E \cap (V_i \times V_j) \neq \emptyset\}$.
	Then:
	\begin{enumerate}[(i)]
		\item \label{item:clique-1} $V = \bigcup_{i \in K} V_i$.
		\item \label{item:clique-2} The following statements are equivalent:
		\begin{enumerate}
			\item \label{item:clique-21}
			$K$ is a clique in $C_1(G)$.
			\item \label{item:clique-22}
			$(V_i)_{i \in K}$ is a partition of~$V$
			and $Q$ is a directed cycle.
		\end{enumerate}
		\item \label{item:clique-3}
		If the equivalent statements of \eqref{item:clique-2} are satisfied
		then, for each $i, j \in K$,
		the $i, j$-sourced cut is $(S, T)$ with
		$S = \bigcup_{k \in A_i(Q {\setminus} j)} V_k$
		and $T = \bigcup_{k \in A_j(Q {\setminus} i)} V_k$.
	\end{enumerate}
\end{theorem}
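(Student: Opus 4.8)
The plan is to prove the three parts in order, establishing parts~(ii) and~(iii) together. Write $V_i = A_i(G \setminus (K \setminus \{i\}))$ for the set of nodes that reach~$i$ without meeting another node of~$K$, and recall that $i \in V_i$. Part~(i) generalizes \Cref{lem:jaf-cuts}\ref{prop:ancestors-1}: for $v \in V$, strong connectivity gives a path from~$v$ to some fixed node of~$K$, and taking $i$ to be the first node of~$K$ along this path, its prefix from~$v$ to~$i$ avoids $K \setminus \{i\}$, so $v \in V_i$. The technical engine for the rest is an \emph{exit lemma}: assuming only that $(V_i)_{i \in K}$ is a partition of~$V$, every edge leaving a part~$V_i$ has~$i$ as its tail. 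I would prove this by contraposition: if $(u, w) \in E$ with $u \in V_i \setminus \{i\}$, hence $u \notin K$, and $w \in V_j$, then prefixing a $(K \setminus \{j\})$-avoiding path from~$w$ to~$j$ with the edge $(u, w)$ shows $u \in V_j$; thus $u \in V_i \cap V_j$, and disjointness of the partition forces $i = j$, i.e.\ $w \in V_i$.

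For the implication ``$K$ is a clique'' to ``$(V_i)$ partitions~$V$ and $Q$ is a directed cycle'', the clique hypothesis makes every pair $i, j \in K$ joint-ancestor free by \Cref{theo:s-product-form}, which forces the sets $V_i$ to be pairwise disjoint, since any $v \in V_i \cap V_j$ would be a joint ancestor of~$i$ and~$j$. Combined with part~(i), this makes $(V_i)$ a partition, so the contracted graph~$Q$ is strongly connected and every node has out-degree at least~$1$. For out-degree at most~$1$, observe that an edge from~$V_i$ to~$V_j$ crosses the $i,j$-sourced cut $(A_i(G \setminus j), A_j(G \setminus i))$ of \Cref{prop:jaf-cuts-1}, so its tail is~$i$; two distinct $Q$-edges $i \to j$ and $i \to j'$ would then exhibit~$i$ as a joint ancestor of~$j$ and~$j'$, contradicting the clique hypothesis. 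A strongly connected digraph on~$n$ nodes in which every out-degree equals~$1$ is a single directed cycle.

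For the converse implication and part~(iii) at once, assume $(V_i)$ partitions~$V$ and $Q$ is the cycle $k_1 \to \cdots \to k_n \to k_1$, and fix distinct $i, j \in K$. I would show directly that the pair $(S, T)$ with $S = \bigcup_{c \in A_i(Q \setminus j)} V_c$ and $T = \bigcup_{c \in A_j(Q \setminus i)} V_c$ is an $i,j$-sourced cut. In a cycle, $A_i(Q \setminus j)$ and $A_j(Q \setminus i)$ are the two complementary arcs obtained by deleting~$j$ and~$i$ respectively, so they partition~$K$, whence $S$ and~$T$ partition~$V$. To compute the source, note that any edge from~$S$ to~$T$ runs from some part~$V_c$, with~$c$ in the first arc, to some part~$V_d$, with~$d$ in the second arc; the exit lemma forces its tail to be~$c$, so $(c, d)$ is an edge of~$Q$ and~$d$ is the cyclic successor of~$c$. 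Since the successor of any first-arc node other than~$i$ remains in the first arc, the only possible tail is $c = i$, and~$i$ does supply such an edge, namely the witness of the $Q$-edge leaving~$i$, whose tail is~$i$ by the exit lemma. Hence the source on the $S$-side is exactly $\{i\}$, and symmetrically $\{j\}$ on the $T$-side. By \Cref{lem:cut-implies-product-form} this gives an S-product-form relationship between every pair in~$K$, which proves the clique condition, and by the uniqueness in \Cref{prop:jaf-cuts-1} the cut $(S, T)$ is \emph{the} $i,j$-sourced cut, which proves part~(iii).

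The step I expect to be the main obstacle is the exit lemma, and more broadly the honest translation between paths in~$G$ and walks in~$Q$: a path in~$G$ may traverse a part~$V_j$ without ever visiting the node~$j$, and it is exactly the partition hypothesis, through the exit lemma, that rules this out and lets the source computation collapse to the single node~$i$. Once the exit lemma is available, the remaining arguments reduce to bookkeeping on the arcs of the cycle~$Q$.
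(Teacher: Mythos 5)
Your proof is correct. Its skeleton largely coincides with the paper's: part (i) is proved the same way (strong connectivity plus the first node of $K$ along a path); your exit lemma is exactly the paper's key intermediate step, proved by the same prefixing argument (including the needed observation that $u \notin K$); and your clique-to-(partition and cycle) direction matches the paper's, using the source of the $i,j$-sourced cut to force tails and a joint-ancestor contradiction to force out-degree one. The genuine difference is how you finish the converse direction together with part (iii). The paper argues at the level of \emph{paths}: it projects an arbitrary $G$-path onto the parts $(V_k)_{k \in K}$ to obtain a walk in $Q$, concludes that every path in $G$ from $S$ to node~$j$ must visit~$i$, hence $A_i(G \setminus j) \subseteq S$ and $A_j(G \setminus i) \subseteq T$, and then invokes \Cref{lem:jaf-cuts}\ref{prop:ancestors-1} to obtain $S = A_i(G \setminus j)$ and $T = A_j(G \setminus i)$, so that joint-ancestor freeness and the identification of the cut follow from \Cref{prop:jaf-cuts-1}. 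You instead argue at the level of single \emph{crossing edges}: the exit lemma plus the arc structure of the cycle show that every edge from $S$ to $T$ has tail~$i$ (and at least one such edge exists, witnessing the $Q$-edge leaving~$i$), and symmetrically every edge from $T$ to $S$ has tail~$j$, so $(S, T)$ is an $i,j$-sourced cut outright; the clique then follows from \Cref{lem:cut-implies-product-form}, and part (iii) from the uniqueness statement of \Cref{prop:jaf-cuts-1}. Your finish is more elementary and avoids the path-projection bookkeeping, at the cost of not producing the explicit identities $S = A_i(G \setminus j)$ and $T = A_j(G \setminus i)$ that the paper obtains along the way (you recover them only through uniqueness). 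Both proofs also share the same harmless imprecision in the exit lemma: the prefixed walk need not be a simple path if $u$ already lies on the $(K \setminus \{j\})$-avoiding path from $w$ to~$j$, but in that case the suffix starting at~$u$ already witnesses $u \in V_j$, so nothing breaks.
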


\Cref{theo:clique} is illustrated in \Cref{fig:clique} with a $9$-node formal Markov chain\add{~G. By definition, for each $i \in K$, $V_i$ is the set of ancestors of node~$i$ in the subgraph of~$G$ obtained by removing all nodes in $K {\setminus} \{i\}$.}
\replace{whose cut graph}{The cut graph of~$G$} contains the clique $K = \{1, 5, 6, 8, 9\}$.
Indeed, condition~\eqref{item:clique-22} of \Cref{theo:clique} is satisfied:
the sets $V_1$, $V_5$, $V_6$, $V_8$, and $V_9$ are disjoint,
and the quotient graph~$Q$
as defined in \Cref{theo:clique} is a cycle
visiting $V_1$, $V_5$, $V_6$, $V_8$, and $V_9$, in this order%
\footnote{
	Note that we identify each subset $V_i$ with its representative vertex $i \in K$. While formally $Q$ is defined with $K$ as its set of vertices,
	we equivalently think of it as having the family $(V_i)_{i \in K}$ as vertices.
}.
Intuitively, the nodes in~$K$ act as \emph{no-return points} in the graph~$G$
in the sense that, once the set $V_i$ has been exited (necessarily via~$i$) for some $i \in K$,
the only way of returning to this set is by traversing the graph~$G$
consistently with the cycle~$Q$.
Saying that the sets $V_1$, $V_5$, $V_6$, $V_8$, and $V_9$ are pairwise disjoint
(and therefore form a partition of~$V$)
is equivalent to saying that node~$i$ is the only exit point from set $V_i$, for each $i \in K$.
Focusing for instance on nodes~$5$ and $8$,
the $5, 8$-sourced cut is given by $(V_5 \cup V_1 \cup V_9, V_8 \cup V_6)$.

\begin{figure}[t]
	\centering
	\begin{tikzpicture}
		\foreach \a in {1, 2, ..., 6} {
			\node[state] (\a) at (180-\a*360/6: 2cm) {\phantom{\a}};
		}
		\node[state] (7) at ($(1)!.5!(2)+(0, .8cm)$) {\phantom{7}};
		\node[state] (8) at ($(1)!.5!(2)+(0, -.8cm)$) {\phantom{8}};
		\node[state] (9) at ($(3)!.5!(4)+(-1.1cm, .5cm)$) {\phantom{9}};
		
		\draw (6) edge[->] (1);
		\draw (6) edge[->] (8);
		\draw (1) edge[->, bend left] (7);
		\draw (1) edge[->] (8);
		\draw (7) edge[->, bend left] (1);
		\draw (7) edge[->] (8);
		\draw (7) edge[->, bend left] (2);
		\draw (8) edge[->] (2);
		\draw (2) edge[->, bend left] (7);
		\draw (2) edge[->] (3);
		\draw (3) edge[->] (4);
		\draw (3) edge[->] (9);
		\draw (9) edge[->, bend left] (4);
		\draw (4) edge[->, bend left] (9);
		\draw (4) edge[->] (5);
		\draw (5) edge[->] (6);
		
		\node at (6) {1};
		\node at (1) {2};
		\node at (7) {3};
		\node at (8) {4};
		\node at (2) {5};
		\node at (3) {6};
		\node at (9) {7};
		\node at (4) {8};
		\node at (5) {9};
		
		\fill[blue, opacity=.4, rounded corners]
		($(1.west)+(-.1cm, 0)$) -- ($(1.west)+(-.1cm, .2cm)$)
		-- ($(7.north)+(-.2cm, .1cm)$) -- ($(7.north)+(.2cm, .1cm)$)
		-- node[midway, right, above, opacity=1] {$V_5$} ($(2.east)+(.1cm, .2cm)$) -- ($(2.east)+(.1cm, -.2cm)$)
		-- ($(8.south)+(.2cm, -.1cm)$) -- ($(8.south)+(-.2cm, -.1cm)$)
		-- ($(1.west)+(-.1cm, -.2cm)$) -- ($(1.west)+(-.1cm, 0)$);
		
		\fill[yellow, semitransparent] (3) circle[radius=.5cm];
		\node[yellow, right] at ($(3.east)+(.2cm, 0)$) {$V_6$};
		
		\fill[red,  opacity=.4, rounded corners]
		($(4.south east)+(0, -.2cm)$) -- ($(4.south east)+(.1cm, -.2cm)$)
		-- node[midway, right, opacity=1] {$V_8$} ($(4.south east)+(.3cm, .2cm)$)
		-- ($(9.north east)+(.1cm, .2cm)$) -- ($(9.north west)+(-.1cm, .2cm)$) -- ($(9.north west)+(-.3cm, -.2cm)$)
		-- ($(4.south west)+(-.1cm, -.2cm)$) -- ($(4.south west)+(0, -.2cm)$);
		
		\fill[orange, semitransparent] (5) circle[radius=.5cm];
		\node[orange, left] at ($(5.west)+(-.2cm, 0)$) {$V_9$};
		
		\fill[green, semitransparent] (6) circle[radius=.5cm];
		\node[green, left] at ($(6.west)+(-.2cm, 0)$) {$V_1$};
	\end{tikzpicture}
	\caption{A formal Markov chain whose cut graph contains the clique $\{1, 5, 6, 8, 9\}$.}
	\label{fig:clique}
\end{figure}
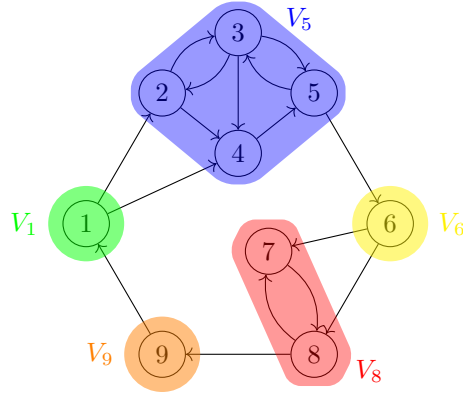

\begin{proof}[Proof of \Cref{theo:clique}]
	We prove each statement one after another.
	
	\medskip
	
	\noindent \eqref{item:clique-1}
	Let $k \in V$.
	Because $G$ is strongly connected,
	there exists a directed path $k_1, k_2, \ldots, k_n$,
	with $k_1 = k$ and $k_n \in K$.
	Then $k \in V_{k_p}$ with $p = \min\{q \in \{1, 2, \ldots, n\} | k_q \in K\}$.
	
	\medskip
	
	\noindent \eqref{item:clique-2}
	We prove each direction of the equivalence separately.
	
	\noindent
	First assume that \eqref{item:clique-21} is satisfied:
	$K$ is a clique in $C_1(G)$.
	By \Cref{prop:jaf-cuts-1},
	it means that $A_i(G {\setminus} j) \cap A_j(G {\setminus} i) = \emptyset$ for each $i, j \in K$.
	We now verify the two parts of \eqref{item:clique-22}:
	\begin{itemize}
		\item $(V_i)_{i \in K}$ is a partition of~$V$:
		For each $i, j \in K$, we have $V_i \cap V_j = \emptyset$ because
		$V_i \subseteq A_i(G {\setminus} j)$,
		$V_j \subseteq A_j(G {\setminus} i)$,
		and $A_i(G {\setminus} j) \cap A_j(G {\setminus} i) = \emptyset$.
		Therefore, $(V_i)_{i \in K}$ is a family of pairwise disjoint sets.
		Combining this with~\eqref{item:clique-1}
		implies that $(V_i)_{i \in K}$ is a partition of~$V$.
		\item $Q$ is a directed cycle:
		$Q$ is strongly connected
		because $G$ is and $(V_i)_{i \in V}$ covers~$V$.
		It remains to be proved that, for each $i \in K$,
		there is at most one~$j \in K {\setminus} \{i\}$
		such that $(i, j) \in L$.
		First observe that, for each $i, j \in K$,
		we have $E \cap (V_i \times V_j) \subseteq \{i\} \times V_j$
		because $V_i \subseteq A_i(G {\setminus} j)$,
		$V_j \subseteq A_j(G {\setminus} i)$, and
		\eqref{item:clique-21} implies that
		$(A_i(G {\setminus} j), A_j(G {\setminus} i))$
		is an $i, j$-sourced cut.
		Now assume for the sake of contradiction that
		there are $j, j' \in K$, with $j \neq j'$,
		such that $(i, j) \in L$ and $(i, j') \in L$.
		Combined with the previous observation,
		it follows there exist $k \in V_j$ and $k' \in V_{j'}$
		such that $(i, k) \in E$ and $(i, k') \in E$.
		Recalling the definitions of $V_j$ and $V_{j'}$,
		we conclude that $i \in A_j(G {\setminus} j') \cap A_{j'}(G {\setminus} j)$,
		By \Cref{prop:jaf-cuts-1},
		this contradicts our assumption that there is a $j, j'$-sourced cut.
	\end{itemize}
	
	\noindent
	Now assume that \eqref{item:clique-22} is satisfied, i.e.,
	$(V_i)_{i, \in K}$ is a partition of~$V$
	and $Q$ is a directed cycle.
	We proceed step-by-step.
	\begin{enumerate}[(A)]
		\item \label{intermediary-1}
		We first verify that,
		for each $(i, j) \in L$, we have
		$E \cap (V_i \times V_j) \subseteq \{i\} \times V_j$,
		i.e., edges in~$G$ from~$V_i$ to~$V_j$ necessarily have source node~$i$.
		Let $(i, j) \in L$.
		Assume for the sake of contradiction that
		there exist $k \in V_i {\setminus} \{i\}$
		and $\ell \in V_j$
		such that $(k, \ell) \in E$.
		Then there exists a path from~$k$ to~$j$
		(through $\ell$) that does not visit
		any node in~$K {\setminus} \{j\}$,
		meaning that $k \in V_j$,
		which is impossible because
		$k \in V_i$ and $V_i \cap V_j = \emptyset$.
	\end{enumerate}
	Now let $i, j \in K$,
	$S = \bigcup_{k \in A_i(Q {\setminus} j)} V_k$,
	and $T = \bigcup_{k \in A_j(Q {\setminus} i)} V_k$.
	Our end goal is to prove that $(S, T)$
	is an $i, j$-sourced cut.
	\begin{enumerate}[(A)]
		\setcounter{enumi}{1}
		\item \label{intermediary-3}
		We know that $(S, T)$ is a partition of~$V$ because
		$(A_i(Q {\setminus} j), A_j(Q {\setminus} i))$
		is a partition of~$K$
		(because~$Q$ is a directed cycle)
		and $(V_k)_{k \in k}$ is a partition of~$V$.
		\item \label{intermediary-2}
		Let us prove that,
		for each $k \in A_i(Q {\setminus} j)$,
		every path in~$G$
		from any node in~$V_k$ to node~$j$ visits node~$i$.
		Let $k \in A_i(Q {\setminus} j)$ and $\ell \in V_k$.
		Consider any path $\ell_1, \ell_2, \ldots, \ell_n$ in~$G$
		such that $\ell_1 = \ell$ and $\ell_n = j$.
		For each $p \in \{1, 2, \ldots, n\}$,
		let $k_p$ denote the unique node in~$K$
		such that $\ell_p \in V_{k_p}$;
		we have in particular $k_1 = k$ and $k_n = j$.
		By definition of~$Q$, for each $p \in \{1, 2, \ldots, n-1\}$,
		we have either $k_p = k_{p+1}$ or $(k_p, k_{p+1}) \in L$,
		i.e., the sequence of distinct nodes
		in $k_1, k_2, \ldots, k_n$ forms a path in~$Q$.
		Since $Q$ is a directed cycle
		and $k \in A_i(Q {\setminus} j)$,
		the only path in~$Q$ from~$k_1 = k$ to~$k_n = j$ visits~$i$.
		Hence, there is~$q \in \{1, 2, \ldots, n-1\}$
		such that $k_q = i$,
		and we can define
		$p = \max\{q \in \{1, 2, \ldots, n\}: k_q = i\}$.
		We know that $p \le n-1$ because $k_n = j \neq i$.
		We obtain $(k_p, k_{p+1}) \in L$,
		so that by~\ref{intermediary-1}
		we have necessarily $\ell_p = i$.
		\item \label{intermediary-4}
		Let us now prove that
		$A_j(G {\setminus} i) \subseteq T$,
		i.e., if $\ell \in S = V {\setminus} T$
		then $\ell \notin A_j(G {\setminus} i)$.
		Let $\ell \in S$.
		By definition of~$S$,
		there is $k \in A_i(Q {\setminus} j)$
		so that $\ell \in V_k$.
		By~\ref{intermediary-2}, every path in~$G$
		from node~$\ell$ to node~$j$ visits node~$i$
		(and such a path exists because $G$ is strongly connected).
		By definition of $A_j(G {\setminus} i)$,
		this means that $\ell \notin A_j(G {\setminus} i)$.
		\item Putting all the pieces together,
		we have that $A_j(G {\setminus} i) \subseteq T$ (by \ref{intermediary-4}),
		$A_i(G {\setminus} j) \subseteq S$ (by symmetry),
		$(S, T)$ is a partition of~$V$ (by \ref{intermediary-3}),
		and $A_i(G {\setminus} j) \cup A_j(G {\setminus} i) = V$
		(by \Cref{lem:jaf-cuts}\ref{prop:ancestors-1}).
		It follows that $A_i(G {\setminus} j) = S$ and $A_j(G {\setminus} i) = T$,
		and that $A_i(G {\setminus} j) \cap A_j(G {\setminus} i) = \emptyset$,
		i.e., nodes~$i$ and~$j$ are joint-ancestor free.
		By \Cref{prop:jaf-cuts-1},
		we conclude that
		$(A_i(G {\setminus} j), A_j(G {\setminus} i)) = (S, T)$
		is the $i, j$-sourced cut.
	\end{enumerate}
	
	\medskip
	
	\noindent \eqref{item:clique-3}
	This is a by-product of the proof of \eqref{item:clique-2}.
\end{proof}

\section{Second-level cuts: Progress towards \Cref{conj:second-level}}
\label{app:second-level}

In \Cref{sec:paths-cut-graph}, we prove intermediary results
that are then applied in \Cref{sec:expanding-case}
to prove \Cref{theo:induction-step-1}.

\subsection{Paths in the cut graph}
\label{sec:paths-cut-graph}

Let us first study the behavior of paths $i_1, i_2, \ldots, i_n$ in the cut graph $C_1(G)$,
where each pair of neighboring vertices in the path is joint-ancestor free.
We are interested in the joint-ancestor freeness of the two terminal nodes in the path,
$i_1$ and $i_n$.
We show that the endpoints $i_1$ and $i_n$ cannot share a joint ancestor in the subgraph graph $G {\setminus} \{i_2, i_3, \ldots i_{n-1}\}$.

First, for the purpose of induction, we prove this claim for length-3 paths:

\begin{lemma} \label{lem:path-3}
	Consider a directed graph $G = (V, E)$
	and three nodes $i_1, i_2, i_3 \in V$
	such that $i_1$ and $i_2$ are joint-ancestor free
	(i.e., $A_{i_1}(G {\setminus} i_2)
	\cap A_{i_2}(G {\setminus} i_1) = \emptyset$)
	and $i_2$ and $i_3$ are joint-ancestor free
	(i.e., $A_{i_2}(G {\setminus} i_3)
	\cap A_{i_3}(G {\setminus} i_2) = \emptyset$).
	Then $A_{i_1}(G {\setminus} \{i_2, i_3\})
	\cap A_{i_3}(G {\setminus} \{i_1, i_2\}) = \emptyset$.
\end{lemma}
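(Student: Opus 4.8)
The plan is to argue by contradiction. Suppose there is a node $k \in A_{i_1}(G {\setminus} \{i_2, i_3\}) \cap A_{i_3}(G {\setminus} \{i_1, i_2\})$, and fix a path $P_1$ from $k$ to $i_1$ avoiding $\{i_2, i_3\}$ together with a path $P_3$ from $k$ to $i_3$ avoiding $\{i_1, i_2\}$. First I would extract two purely set-theoretic consequences of the hypotheses. Since $P_1$ avoids $i_2$, we have $k \in A_{i_1}(G {\setminus} i_2)$, so joint-ancestor freeness of $i_1, i_2$ forces $k \notin A_{i_2}(G {\setminus} i_1)$; that is, no path from $k$ to $i_2$ can avoid $i_1$. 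Symmetrically, since $P_3$ avoids $i_2$, we have $k \in A_{i_3}(G {\setminus} i_2)$, so joint-ancestor freeness of $i_2, i_3$ forces $k \notin A_{i_2}(G {\setminus} i_3)$; that is, no path from $k$ to $i_2$ can avoid $i_3$. The whole proof then reduces to producing a single path from $k$ to $i_2$ that avoids one of $i_1$ or $i_3$, contradicting one of these two statements.

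To build such a path, I would invoke strong connectivity of $G$ (which holds since $G$ is the transition graph of a formal Markov chain) to fix a path $R = i_1, v_2, \ldots, v_{m-1}, i_2$ from $i_1$ to $i_2$, and then split on whether $R$ meets $i_3$. If $i_3 \notin R$, then $R$ avoids $i_3$, and concatenating $P_1$ (which avoids $i_3$) with $R$ gives a walk from $k$ to $i_2$ avoiding $i_3$; reducing this walk to a simple path yields $k \in A_{i_2}(G {\setminus} i_3)$, contradicting the second consequence. If instead $i_3 \in R$, say $i_3 = v_p$, then because $R$ is a path and $i_1 = v_1$ is distinct from $i_3$, the suffix $v_p, \ldots, i_2$ avoids $i_1$; concatenating $P_3$ (which avoids $i_1$) with this suffix gives a walk from $k$ to $i_2$ avoiding $i_1$, and reducing it to a simple path yields $k \in A_{i_2}(G {\setminus} i_1)$, contradicting the first consequence. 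Either way we reach a contradiction, so no such $k$ exists and the two endpoint ancestor sets are disjoint.

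The conceptual crux --- and the step I expect to require the most care --- is the observation that one arbitrary $i_1$-to-$i_2$ path can be repurposed to attack \emph{either} forbidden reachability statement, depending solely on whether it happens to pass through $i_3$; this is what lets the two joint-ancestor-freeness hypotheses be combined without having to track the detailed interaction of $P_1$ and $P_3$. The remaining details are routine but must be handled honestly: the distinctness of $i_1, i_2, i_3$ (which guarantees the suffix of $R$ genuinely omits $i_1$), and the standard reduction of a vertex-avoiding walk to a vertex-avoiding simple path with the same endpoints. I would also flag that strong connectivity is genuinely necessary here --- without a path from $i_1$ to $i_2$ the statement can fail --- so the lemma should be read in the strongly-connected setting of formal Markov chains in which it is applied.
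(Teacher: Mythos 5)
Your proof is correct and follows essentially the same strategy as the paper's: assume a joint ancestor $k$ exists, use strong connectivity to obtain an auxiliary path ending at $i_2$, case-split on which of $i_1, i_3$ that path visits, and splice it with one of $k$'s two given avoiding paths to contradict one of the two joint-ancestor-freeness hypotheses. The only cosmetic difference is that you anchor the auxiliary path at $i_1$ rather than at $k$ (which trims the case analysis from three leaf cases to two), and your explicit remarks on the walk-to-path reduction and on the necessity of strong connectivity --- which the paper's proof also uses without listing it as a hypothesis --- are accurate.
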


\begin{proof}
	Assume for the sake of contradiction that
	$A_{i_1}(G {\setminus} \{i_2, i_3\})
	\cap A_{i_3}(G {\setminus} \{i_1, i_2\}) \neq \emptyset$,
	and let $k \in A_{i_1}(G {\setminus} \{i_2, i_3\})
	\cap A_{i_3}(G {\setminus} \{i_1, i_2\})$.
	We will prove that either
	$k \in A_{i_2}(G {\setminus} i_1)$
	or $k \in A_{i_2}(G {\setminus} i_3)$,
	which will contradict our assumption that
	$i_1$ and $i_2$ are joint-ancestor free
	and $i_1$ and $i_3$ are joint-ancestor free.
	Since $G$ is strongly connected,
	there is a path $p = p(k \to i_2)$.
	We now make a case disjunction.
	
	First, suppose that $p(k \to i_2)$ either does not visite node~$i_1$ or does not visit node~$i_3$.
	If $p(k \to i_2)$ does not visit node $i_1$, then $k \in A_{i_2}(G {\setminus} i_1)$.
	Since $k \in A_{i_1}(G {\setminus} \{i_2, i_3\}) \subseteq A_{i_1}(G {\setminus} i_2)$, this contradicts our assumption that nodes $i_1$ and $i_2$ are joint-ancestor free.
	If $p(k \to i_2)$ does not visit node $i_3$, then $k \in A_{i_2}(G {\setminus} i_3)$, which leads to a similar contradiction regarding nodes~$i_2$ and~$i_3$.
	
	On the other hand, suppose that $p(k \to i_2)$ visits both $i_1$ and $i_3$.
	Let us consider~$p'$, the last portion of~$p$ beginning at the last visit to either $i_1$ or $i_3$.
	Without loss of generality, suppose $p'$ begins at $i_1$
	and reaches $i_2$ without visiting $i_3$.
	Since we assumed that $k \in A_{i_1}(G {\setminus} \{i_2, i_3\})$,
	concatenating $p(k \to i_1 {\setminus} \{i_2, i_3\})$ and $p'$
	gives us a path from $k$ to $i_2$ without visiting $i_3$.
	So $k \in A_{i_2}(G {\setminus} i_3)$,
	which as explained before leads to a contradiction.
	
	In every case, we have a contradiction to either our assumption that $i_1$ and $i_2$ are joint-ancestor free, or that $i_2$ and $i_3$ are joint-ancestor free.
	Thus, our initial assumption much be wrong: There is no node $k$ in $A_{i_1}(G {\setminus} \{i_2, i_3\})
	\cap A_{i_3}(G {\setminus} \{i_1, i_2\})$, as desired. \qedhere
\end{proof}

Next, we build inductively on this result to handle paths of arbitrary lengths.

\begin{lemma} \label{lem:path-n}
	Consider a directed graph $G = (V, E)$
	and a sequence of $n \ge 3$ distinct nodes
	$i_1, i_2, \ldots, i_n$
	such that $i_p$ and $i_{p+1}$
	are joint-ancestor free
	for each $p \in \{1, 2, \ldots, n-1\}$.
	Then $A_{i_1}(G {\setminus} \{i_2, i_3, \ldots, i_n\})
	\cap A_{i_n}(G {\setminus} \{i_1, i_2, \ldots, i_{n-1}\})
	= \emptyset$.
\end{lemma}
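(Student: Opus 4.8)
The plan is to argue by strong induction on $n$, with \Cref{lem:path-3} serving as the base case $n = 3$ and each inductive step peeling off the endpoint $i_1$.

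For the step, assume the statement for all path lengths in $\{3, \ldots, n-1\}$, and set $W = \{i_3, i_4, \ldots, i_{n-1}\}$ and $\tilde G = G \setminus W$. The key observation is that the desired conclusion is \emph{verbatim} the conclusion of \Cref{lem:path-3} applied to the triple $i_1, i_2, i_n$ inside $\tilde G$: since $\tilde G \setminus \{i_2, i_n\} = G \setminus \{i_2, i_3, \ldots, i_n\}$ and $\tilde G \setminus \{i_1, i_2\} = G \setminus \{i_1, i_2, \ldots, i_{n-1}\}$, we have
\[
A_{i_1}(\tilde G \setminus \{i_2, i_n\}) \cap A_{i_n}(\tilde G \setminus \{i_1, i_2\}) = A_{i_1}(G \setminus \{i_2, \ldots, i_n\}) \cap A_{i_n}(G \setminus \{i_1, \ldots, i_{n-1}\}).
\]
It therefore suffices to verify the two hypotheses of \Cref{lem:path-3} for this triple in $\tilde G$. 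The first, that $i_1$ and $i_2$ are joint-ancestor free in $\tilde G$, follows from the same property in $G$ by monotonicity of ancestor sets under node deletion: avoiding more nodes only shrinks every ancestor set, so $A_{i_1}(\tilde G \setminus i_2) \cap A_{i_2}(\tilde G \setminus i_1) \subseteq A_{i_1}(G \setminus i_2) \cap A_{i_2}(G \setminus i_1) = \emptyset$. The second, that $i_2$ and $i_n$ are joint-ancestor free in $\tilde G$, is exactly the induction hypothesis applied to the sub-path $i_2, i_3, \ldots, i_n$, after rewriting $\tilde G \setminus i_n = G \setminus \{i_3, \ldots, i_n\}$ and $\tilde G \setminus i_2 = G \setminus \{i_2, \ldots, i_{n-1}\}$.

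The main obstacle is that \Cref{lem:path-3} cannot be invoked as a black box on $\tilde G$: its proof uses strong connectivity of the ambient graph to produce a path from the hypothetical joint ancestor $k$ to the middle node, and $\tilde G = G \setminus W$ need not be strongly connected. The fix I would pursue is to re-run the case analysis of \Cref{lem:path-3} inside the strongly connected graph $G$ while carrying the extra avoided set $W$ throughout, using strong connectivity of $G$ (rather than of $\tilde G$) to route $k$ to $i_2$. Tracking the last time such a $k \to i_2$ path leaves the node set $\{i_1, \ldots, i_n\}$, the argument closes immediately when this last exit is one of the endpoints $i_1$ or $i_n$, because the witnessing paths from $k$ to $i_1$ and to $i_n$, which already avoid $W$, can be prepended to build the contradicting $W$-avoiding path. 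The delicate case, and the one I expect to require real work, is when the last exit occurs at an \emph{interior} node $i_m$ with $3 \le m \le n-1$, since there is then no ready-made $W$-avoiding path from $k$ to $i_m$. I would address this by choosing the witness $k$ together with its two paths minimally, so that they are disjoint and of shortest total length, exactly as in Step~\ref{step1} of the proof of \Cref{theo:no-product-form}, and then reduce the interior case to the strictly shorter sub-path $i_2, \ldots, i_m$, to which the induction hypothesis applies.
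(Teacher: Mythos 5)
Your reduction framing is sound up to the point you yourself flag: the identifications $\tilde G \setminus \{i_2, i_n\} = G \setminus \{i_2, \ldots, i_n\}$ and $\tilde G \setminus \{i_1, i_2\} = G \setminus \{i_1, \ldots, i_{n-1}\}$ are correct, the monotonicity argument for the pair $(i_1, i_2)$ is correct, and deriving joint-ancestor freeness of $(i_2, i_n)$ in $\tilde G$ from the induction hypothesis on the sub-path $i_2, \ldots, i_n$ is correct. The obstacle you identify is also real, not cosmetic: \Cref{lem:path-3} is genuinely \emph{false} without strong connectivity (take $V = \{k, i_1, i_2, i_3\}$ with only the edges $(k, i_1)$ and $(k, i_3)$; both hypotheses hold because $i_2$ has no ancestor besides itself, yet $k$ lies in both ancestor sets of the conclusion), so the lemma cannot be applied to, or re-proved for, $\tilde G$ in isolation; it must be re-run inside the strongly connected $G$, as you propose.

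The gap is in your ``delicate case,'' and it is exactly the hard part of the paper's induction step. To contradict the induction hypothesis for the sub-path $i_2, \ldots, i_m$ you must exhibit a node with a path to $i_2$ avoiding $\{i_3, \ldots, i_m\}$ \emph{and} a path to $i_m$ avoiding $\{i_2, \ldots, i_{m-1}\}$, and none of your objects supplies one: $k$'s path to $i_2$ passes through $i_m$; the initial segment of that path from $k$ to $i_m$ avoids $i_2$ but may visit $i_3, \ldots, i_{m-1}$ in any order; and minimality/disjointness of $p_{(a)}, p_{(b)}$ (the Step~\ref{step1} device from \Cref{theo:no-product-form}) constrains nothing about the auxiliary $k \to i_2$ path on which $i_m$ lies. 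The way out is to anchor the recursion at an endpoint where witness paths from $k$ exist, which is what the paper does: it routes a path from $i_1$ to $i_{n-1}$; either this path avoids $i_n$, and prepending $p_{(a)}$ contradicts joint-ancestor freeness of $(i_{n-1}, i_n)$ together with $p_{(b)}$, or its tail from $i_n$ avoids $i_1$, and cutting that tail at the \emph{first} node $i_q \in \{i_2, \ldots, i_{n-1}\}$ it meets and prepending $p_{(b)}$ contradicts the induction hypothesis for the sub-path $i_1, \ldots, i_q$, which contains the endpoint $i_1$. Your argument can be repaired in the same spirit (e.g., take the segment of the $k \to i_2$ path after its last visit to $\{i_1, i_n\}$ and cut at the first subsequent visit to $\{i_2, \ldots, i_{n-1}\}$), but the sub-path you recurse on must then be anchored at $i_1$ or $i_n$, not $i_2, \ldots, i_m$; as written, the interior case does not close.
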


\begin{proof}
	We make a proof by induction over~$n$,
	using \Cref{lem:path-3} as a base case for $n = 3$.
	
	Let $n \ge 4$ and assume the induction assumption is true for each $p \in \{3, 4, \ldots, n-1\}$.
	Assume for the sake of contradiction that
	the conclusion does not hold,
	i.e., there exist $k \in V$ and two paths
	$p_{(a)} = p(k \to i_1 {\setminus} \{i_2, i_3, \ldots, i_n\})$
	and $p_{(b)} = p(k \to i_n {\setminus} \{i_1, i_2, \dots, i_{n-1}\})$.
	
	Since $G$ is strongly connected,
	we know that at least one of the following is true:
	there is a path $p_{(1)} = p(i_1 \to i_{n-1} {\setminus} i_n)$,
	or there is a path $p_{(2)} = p(i_n \to i_{n-1} {\setminus} i_1)$.
	To see why, consider a path $x$ from $i_1$ to $i_{n-1}$. If the path $x$ avoids $i_n$, $x$ is $p_{(1)}$, and we have the first case. If the path visits $i_n$, the tail of the path $x$ starting at its visit to $i_n$ is $p_{(2)}$, satisfying the second case.
	
	Let us consider each case in turn:
	\begin{itemize}
		\item Suppose there is a path $p_{(1)} = p(i_1 \to i_{n-1} {\setminus} i_n)$.
		In this case, concatenating
		$p_{(a)}$
		and $p_{(1)}$
		gives us a path
		$p_{(3)} = p(k \to i_{n-1} {\setminus} i_n)$.
		The existence of paths $p_{(3)}$ and~$p_{(b)}$
		implies that that
		$k \in A_{i_{n-1}}(G {\setminus} i_n) \cap A_{i_n}(G {\setminus} i_{n-1})$,
		which contradicts our assumption that
		$i_{n-1}$ and $i_n$ are joint-ancestor free.
		\item On the other hand, suppose that there is a path $p_{(2)} = p(i_n \to i_{n-1} {\setminus} i_1)$.
		Let $q \in \{2, 3, \ldots, n-1\}$
		so that $i_q$ is the first node in $p_{(2)}$ that belongs to $\{i_2, i_3, \ldots, i_{n-1}\}$.
		This provides us with a path
		$p_{(4)} = p(i_n \to i_q {\setminus} \{i_1, \ldots, i_{q-1}, i_{q+1}, \ldots, i_{n-1}\})$
		Concatenating $p_{(b)}$
		and $p_{(4)}$
		gives us a path
		$p_{(5)} = p(k \to i_q {\setminus} \{i_1, \ldots, i_{q-1}, i_{q+1}, \ldots, i_{n-1}\})$.
		The existence of paths $p_{(5)}$ and $p_{(a)}$ implies that
		$k \in A_{i_1}(G {\setminus} \{i_2, i_3, \ldots, i_{q-1}\})
		\cap A_{i_q}(G {\setminus} \{i_1, \ldots, i_{q-1}\})$.
		If $q = 2$, this contradicts our assumption
		that $i_1$ and $i_2$ are joint-ancestor free.
		If $q \ge 3$, this contradicts the induction assumption.\qedhere
	\end{itemize}
\end{proof}

\subsection{Special case: Expanding $I$ when $|J|=1$}
\label{sec:expanding-case}

Now building on \Cref{lem:path-n},
we are ready to prove \Cref{theo:induction-step-1},
which we see as a stepping stone to prove \Cref{conj:second-level},
that any joint-ancestor free subsets $I \subseteq K_1$ and $J \subseteq K_2$
can be expanded to joint-ancestor freeness of the entire connected components of the cut graph, $K_1$ and $K_2$.
In other words, that any broad second-level cut gives rise to a narrow second-level cut.
Here, we only focus on the case of expanding $I$ when $J$ is a single node $\{j\}$.

\begin{theorem} \label{theo:induction-step-1}
	Consider a directed graph $G = (V, E)$.
	Let $K_1$ and $K_2$ denote two connected components of $C_1(G)$.
	Assume that there is a nonempty strict subset $I$ of $K_1$
	and a vertex $j \in K_2$
	such that $I$ and $j$ are joint-ancestor free (in G).
	Then there exists $i \in K_1 {\setminus} I$
	such that $I \cup \{i\}$ and $j$
	are also joint-ancestor free.
\end{theorem}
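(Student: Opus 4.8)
The plan is to prove the statement directly by exhibiting one concrete vertex~$i$ and showing that $I \cup \{i\}$ and $j$ are joint-ancestor free. Since $K_1$ is a connected component of $C_1(G)$ and $\emptyset \neq I \subsetneq K_1$, any path in $C_1(G)$ joining a vertex of $I$ to a vertex of $K_1 \setminus I$ must contain a ``boundary'' edge; that is, there exist $i' \in I$ and $i \in K_1 \setminus I$ adjacent in $C_1(G)$, hence joint-ancestor free in~$G$ by \Cref{def:cut-graph} and \Cref{theo:s-product-form}. I claim this $i$ works. Throughout I abbreviate $A = A_I(G \setminus j)$ and $B = A_j(G \setminus I)$, which are disjoint precisely because $I$ and $j$ are joint-ancestor free, and I write $(A', B') = (A_{i'}(G \setminus i), A_i(G \setminus i'))$ for the unique $i', i$-sourced cut guaranteed by \Cref{prop:jaf-cuts-1}, whose source is exactly $(\{i'\}, \{i\})$.

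Next I would set up a proof by contradiction. Suppose $I \cup \{i\}$ and $j$ are not joint-ancestor free, and fix a joint ancestor $k \in A_{I \cup \{i\}}(G \setminus j) \cap A_j(G \setminus (I \cup \{i\}))$. From $k \in A_j(G \setminus (I \cup \{i\}))$ I obtain a path $\beta \colon k \to j$ avoiding $I \cup \{i\}$; in particular $k \in A_j(G \setminus I) = B$ and $\beta$ avoids both $i'$ and $i$. From $k \in A_{I \cup \{i\}}(G \setminus j)$ I obtain a path from $k$ into $I \cup \{i\}$ avoiding $j$: if this path meets $I$, then $k \in A = A_I(G \setminus j)$, which together with $k \in B$ contradicts joint-ancestor freeness of $I$ and $j$; otherwise it yields a path $\alpha \colon k \to i$ avoiding $I \cup \{j\}$, so in particular $k \in A_i(G \setminus i') = B'$.

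The crux is to locate $j$ relative to the cut $(A', B')$. Because $(A', B')$ is $i', i$-sourced, every edge crossing from $B'$ to $A'$ emanates from~$i$ and every edge crossing from $A'$ to $B'$ emanates from~$i'$, by the definition of the source of a cut (\Cref{def:source}). Since $\beta$ starts at $k \in B'$ and avoids both sources $i'$ and $i$, it can never cross the cut, so it remains in $B'$ and its endpoint satisfies $j \in B'$. Now consider any path $i \to i'$, which exists as $G$ is strongly connected: its first edge leaves $i$, the unique $B' \to A'$ source, and since $i'$ is the unique $A' \to B'$ source and is visited only at the end, every vertex from the second onward lies in $A'$; as $j \in B'$, this path avoids~$j$. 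Concatenating $\alpha$ with such a path produces a walk $k \to i'$ inside $G \setminus j$, whence $k \in A_{i'}(G \setminus j) \subseteq A_I(G \setminus j) = A$, contradicting $k \in B$. This establishes that $I \cup \{i\}$ and $j$ are joint-ancestor free.

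I expect the main obstacle to be the central step of extracting, from the failure of joint-ancestor freeness, a witness path $\beta$ to~$j$ that avoids \emph{both} endpoints $i'$ and $i$ of the boundary edge: it is exactly this double-avoidance that pins $j$ to the $i$-side $B'$ of the $i', i$-sourced cut and then forces every $i$-to-$i'$ path to detour around~$j$. The remaining possibilities (a trivial path with $k = i$, or the path realizing $k \in A_{I \cup \{i\}}(G \setminus j)$ meeting $I$) are routine once the cut-crossing structure of \Cref{prop:jaf-cuts-1} and \Cref{def:source} is in hand. It is worth noting that this cut-based argument reaches the conclusion without invoking \Cref{lem:path-n}; if one preferred, the same boundary-edge setup could instead be fed through \Cref{lem:path-n}, but the direct analysis of $(A', B')$ seems shorter.
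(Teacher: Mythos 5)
Your proof is correct, and it takes a genuinely different route from the paper's. The paper constrains the choice of $i$: it lets $i'$ be the vertex where a path from $j$ first enters $I$, takes $i$ to be the first vertex outside $I$ on a cut-graph path starting at $i'$, and then derives the contradiction by concatenating the path from $k$ to $j$ with the edge out of $j$ and the entry path into $I$, feeding the two resulting paths into \Cref{lem:path-n}, the appendix's inductive lemma on chains of joint-ancestor-free vertices. You instead take $\{i', i\}$ to be an arbitrary boundary edge of $C_1(G)$ between $I$ and $K_1 {\setminus} I$, and you argue directly on the unique $i', i$-sourced cut $(A', B') = (A_{i'}(G {\setminus} i), A_i(G {\setminus} i'))$ given by \Cref{prop:jaf-cuts-1}: since $\beta$ avoids both sources it can never cross the cut, which traps $j \in B'$; and since a path from $i$ to $i'$ can cross from $B'$ to $A'$ only via its first edge (the vertex $i$ occurs only once on a path) and can never return to $B'$ (that would require revisiting $i'$ before the final vertex), every such path avoids $j$, so concatenating it with $\alpha$ places $k$ in $A_{i'}(G {\setminus} j) \subseteq A_I(G {\setminus} j)$, contradicting $k \in A_j(G {\setminus} I)$. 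Both cut-crossing claims are sound consequences of \Cref{def:source} together with the distinctness of vertices along a path. Your route is shorter and more self-contained: it renders \Cref{lem:path-3,lem:path-n} unnecessary for this theorem, and it establishes the slightly stronger fact that \emph{every} vertex $i \in K_1 {\setminus} I$ adjacent to $I$ in the cut graph is a valid expansion vertex, whereas the paper exhibits only one specific $i$. What the paper's longer route buys is \Cref{lem:path-n} itself, a general structural statement about paths in the cut graph that is plausibly a useful tool for the unresolved case $|J| \ge 2$ of \Cref{conj:second-level-expand}.
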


\begin{proof}
	First, because $I$ and $j$ are joint-ancestor free
	and because $G$ is strongly connected,
	there is $\ell \in A_I(G {\setminus} j)$
	such that $(j, \ell) \in E$:
	In particular, there must be a path from $j$ to some node in $I$,
	and we may take $\ell$ to be the second node on that path, after $j$.
	
	Next, because $\ell \in A_I(G {\setminus} j)$,
	there is a path $p(\ell \to i' {\setminus} j)$
	for some $i' \in I$.
	In particular, by ending when the path first enters $I$,
	there must exist a path
	$p_{(1)}(\ell \to i' {\setminus} (\{j\} \cup I {\setminus} \{i'\})$
	for some $i' \in I$.
	
	Now, we will switch to viewing paths in the cut graph $C_1(G)$.
	In particular, we will think about $I$ and $K_1$,
	the connected component of $C_1(G)$ that $I$ lies within.
	Because $K_1$ is a connected component of $C_1(G)$,
	there is a path in the cut graph going from $i'$ to an arbitrary node $i \in K_1 {\setminus} I$.
	In particular, there is a path in the cut graph that
	stays within $I$ until it visits $i$ as the first node in the path outside of $I$ and in $K_1$.
	In other words, this is a cut graph path in $I \cup \{i\}$.

	Now, assume for the sake of contradiction that
	$I \cup \{i\}$ and $j$
	are not joint-ancestor free,
	i.e., there exists
	$k \in A_{I \cup \{i\}}(G {\setminus} j)
	\cap A_j(G {\setminus} (I \cup \{i\}))$.
	Because $I$ and $j$ are joint-ancestor free,
	we necessarily have
	$k \in A_i(G {\setminus} j) \cap A_j(G {\setminus} (I \cup \{i\}))$.
	Hence, there exists a path (in $G$)
	$p(k \to i {\setminus} j)$
	and a path $p(k \to j {\setminus} (I \cup \{i\})$.

	Now, note that the path $p(k \to i {\setminus} j)$ does not visit $I$,
	so it is also a path $p(k \to i {\setminus} I)$.
	To see why, note that if this path did visit $I$,
	then taking the portion from $k$
	to the first visit to $I$
	would give a path $p(k \to I {\setminus} j)$.
	But because we also have a path
	$p(k \to j {\setminus} (I \cup \{i\}))$,
	it follows that
	$k \in A_I(G {\setminus} j) \cap A_j(G {\setminus} I)$,
	which contradicts our assumption
	that $I$ and $j$ are joint-ancestor free.
	
	Now, let us return to the path $p_{(1)}(\ell \to i' {\setminus} (\{j\} \cup I {\setminus} \{i'\}))$.
	We claim that the path $p_{(1)}(\ell \to i' {\setminus} (\{j\} \cup I {\setminus} \{i'\}))$ does not visit node $i$,
	so in particular it is a path
	$p_{(1)}(\ell \to i' {\setminus} (I \cup \{i\} {\setminus} \{i'\})$.
	To see why, note that if $p_{(1)}$ did visit $i$ prior to visiting $i'$,
	then by taking the portion of the path just after visiting $i$,
	there is a path $p(i \to I {\setminus} j)$.
	Concatenating this path with the $p(k \to i {\setminus} j)$ path mentioned previously, we now have a path $p(k \to I {\setminus} j)$.
	Thus, $k$ is a joint ancestor of $I$ and $j$, contradicting our assumption.
	
	Now, we're ready to put it all together.
	Concatenating
	the path $p(k \to j {\setminus} (I \cup \{i\})$,
	the edge $(j, \ell)$,
	and then to the path
	$p_{(1)}(\ell \to i' {\setminus} (I \cup \{i\} {\setminus} \{i'\})$
	yields a path
	$p(k \to i' {\setminus} (I \cup \{i\} {\setminus} \{i'\})$.
	Therefore, we have a path
	$p(k \to i {\setminus} I)$
	and a path $p(k \to i' {\setminus} (I \cup \{i\} {\setminus} \{i'\})$.
	By \Cref{lem:path-n},
	this contradicts the fact that
	there is a path between~$i$ and~$i'$
	through~$I$ in the cut graph $C_1(G)$.
	
	Thus, our assumption was false, and $I \cup \{i\}$ and $j$ are joint-ancestor free, as desired.
\end{proof}

\bibliography{paper}

\end{document}